\theoremstyle{plain}
\newtheorem{prop}{Proposition}[section]
\newtheorem{thm}[prop]{Theorem}
\newtheorem{coro}[prop]{Corollary}
\newtheorem{lemma}[prop]{Lemma}
\newtheorem{conjecture}[prop]{Conjecture}
\newtheorem*{thm*}{Theorem}
\newtheorem{summary}[prop]{Summary}
\theoremstyle{definition}
\newtheorem{defi}[prop]{Definition}
\theoremstyle{remark}
\newtheorem{remark}[prop]{Remark}
\numberwithin{table}{section}
\DeclareMathOperator{\Kfield}{K}
\DeclareMathOperator{\Gal}{Gal}
\DeclareMathOperator{\Vol}{Vol}
\DeclareMathOperator{\ord}{ord}
\DeclareMathOperator{\Spec}{Spec}
\DeclareMathOperator{\Div}{div}
\newcommand{\bfk}{\mathbf k}
\newcommand{\bfz}{\mathbf z}
\newcommand{\bfD}{\mathcal D}
\newcommand{\bfC}{\mathscr C}
\newcommand{\specY}{\mathcal Y}
\newcommand{\bfK}{\mathscr K}
\newcommand{\Sc}{\mathcal{S}}
\newcommand{\Om}{{\mathscr{O}}}
\newcommand{\GL}{{\rm GL}}
\newcommand{\SL}{{\rm SL}}
\newcommand{\PSL}{{\rm PSL}}
\newcommand{\PGL}{{\rm PGL}}
\newcommand{\PGamma}{{\rm P}\Gamma}
\def\ZZ{\mathbb Z}
\def\RR{\mathbb R}
\def\PP{\mathbb P}
\def\CC{\mathbb C}
\DeclareMathOperator{\img}{Im}
\DeclareMathOperator{\Tr}{Tr}
\def\Siegel{\mathfrak H}
\def\<#1>{{\left\langle{#1}\right\rangle}}
\def\Z{{\mathbb Z}}             
\def\Q{{\mathbb Q}}             
\def\R{{\mathbb R}}             
\def\id#1{{\mathfrak{#1}}}      
\def\normid#1{{\norm{\id{#1}}}}
\DeclareMathOperator{\norm}{{\mathscr N}}
\DeclareMathOperator{\trace}{{\mathrm{Tr}}}
\def\?{\ ???\ \immediate\write16{}%
\immediate\write16{Warning: There was still a question mark . . . }%
\immediate\write16{}}
\begin{document}

\title[Hecke/Sturm bounds for HMF over real quadratic fields]{Hecke and Sturm bounds for Hilbert modular forms over real quadratic fields}

\author{Jose Ignacio Burgos Gil}
\address{ICMAT (CSIC-UAM-UCM-UC3), C/ Nicol\'as Cabrera 13-15, 28049
  Madrid, Spain } 
\email{jiburgosgil@gmail.com}
\thanks{JBG was partially supported by grants MTM2009-14163-c02-01 and
 MTM2010-17389.}

\author{Ariel Pacetti}
\address{Departamento de Matem\'atica, Facultad de Ciencias Exactas y Naturales, Universidad de Buenos Aires and IMAS, CONICET, Argentina}
\email{apacetti@dm.uba.ar}
\thanks{AP was partially supported by CONICET PIP 2010-2012 GI and FonCyT BID-PICT 2010-0681.}

\begin{abstract} In this article we give an analogue of Hecke and
  Sturm bounds for Hilbert modular forms over real quadratic fields.
  Let $K$ be a real quadratic field and $\Om_K$ its ring of
  integers. Let $\Gamma$ be a congruence subgroup of $\SL_2(\Om_K)$
  and $M_{(k_1,k_2)}(\Gamma)$ the space of Hilbert modular forms of
  weight $(k_1,k_2)$ for $\Gamma$. The first main result is an
  algorithm to construct a finite set $S$, depending on $K$, $\Gamma$
  and $(k_1,k_2)$, such that if the Fourier expansion coefficients of
  a form $G \in M_{(k_1,k_2)}(\Gamma)$ vanish on the set $S$, then $G$
  is the zero form. The second result corresponds to the same
  statement in the Sturm case, i.e. suppose that all the Fourier
  coefficients of the form $G$ lie in a finite extension of $\Q$, and
  let $\id{p}$ be a prime ideal in such extension, whose norm is
  unramified in $K$; suppose furthermore that the Fourier expansion
  coefficients of $G$ lie in the ideal $\id{p}$ for all the elements in $S$,
  then they all lie in the ideal $\id{p}$.
\end{abstract}

\subjclass[2010]{11F41}
\maketitle

\section*{Introduction}

It is a classical result that the space of modular forms of a fixed
weight and level is finite dimensional. Since modular forms admit a
Fourier expansion, this implies that a few Fourier coefficients should
be enough to determine the form uniquely, but how many coefficients
are needed?

For classical modular forms, this was already known by Hecke (see
\cite{Hecke}, page 811, Satz 1 and Satz 2). Let $\Gamma$ be a
congruence subgroup of $\SL_2(\ZZ)$. Write $\PGamma $ for the image of
$\Gamma $ in $\PSL_{2}(\ZZ)$ and $d=[\PSL_{2}:\PGamma ]$ for the
degree of the map $X(\Gamma )\to X(1)$.  Let $f(z) \in M_{2k}(\Gamma)$
be a weight $2k$ modular form for $\Gamma$ and $f(z) = \sum_{n \ge 0}
a_n(f) q^{n}$ its Fourier expansion at a cusp, where $q=e^{\frac{2\pi i z}{N}}$ is a local uniformizer. Recall that
the order of $f$ at the cusp is defined as
\begin{displaymath}
  \ord(f)=\inf\{n\mid a_{n}(f)\not = 0\}.
\end{displaymath}

\begin{thm*}[Hecke]
Let $f(z)  \in
M_{2k}(\Gamma)$  be a weight $2k$
modular form for $\Gamma$. If $\ord(f)>dk/6$, then $f=0$.
\end{thm*}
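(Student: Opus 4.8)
The plan is to reduce to the classical valence formula for the full modular group $\SL_2(\ZZ)$ by a norm construction. First I would normalize: choosing $s\in\SL_2(\ZZ)$ that sends $\infty$ to the cusp in question and replacing $(\Gamma,f)$ by $(s^{-1}\Gamma s,\,f|_{2k}s)$ changes neither $\ord(f)$ nor $d=[\PSL_2:\PGamma]$, so I may assume the cusp is $\infty$ and that $\infty$ is a cusp of $\Gamma$ of some width $N$; since the weight $2k$ is even, $f$ descends to $\PGamma$, so I may also assume $-I\in\Gamma$.

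Next I would form the norm. Pick coset representatives $\gamma_1,\dots,\gamma_d$ for $\PGamma\backslash\PSL_2(\ZZ)$ with $\gamma_1,\dots,\gamma_N$ equal to $I,T,\dots,T^{N-1}$, where $T=\left(\begin{smallmatrix}1&1\\0&1\end{smallmatrix}\right)$ — this is possible precisely because $\infty$ has width $N$ — and set $g=\prod_{i=1}^{d}f|_{2k}\gamma_i$. Since right multiplication by $\delta\in\SL_2(\ZZ)$ permutes the cosets $\PGamma\gamma_i$ and $f$ is $\PGamma$-invariant, one checks that $g|_{2kd}\delta=g$; and as each factor is holomorphic on $\Siegel$ and at the cusps, $g\in M_{2kd}(\SL_2(\ZZ))$.

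Then I would compare orders of vanishing at $\infty$. For a function holomorphic near $i\infty$ with expansion $\sum_\lambda c_\lambda e^{2\pi i\lambda z}$ in (possibly fractional) exponents, let $v_\infty$ be the least $\lambda$ with $c_\lambda\neq0$; this is additive over products and agrees with the order at the cusp on forms for $\SL_2(\ZZ)$. From $f|_{2k}T^j(z)=f(z+j)=\sum_n a_n(f)\,e^{2\pi i nj/N}e^{2\pi i nz/N}$ one gets $v_\infty(f|_{2k}T^j)=\ord(f)/N$, while $v_\infty(f|_{2k}\gamma_i)\ge0$ for every $i$ by holomorphy at the cusps, so
\[
 \ord(g)=\sum_{i=1}^{d}v_\infty(f|_{2k}\gamma_i)\;\ge\;\sum_{j=0}^{N-1}\frac{\ord(f)}{N}=\ord(f).
\]
If $f\neq0$ then $g\neq0$, and the valence formula for $\SL_2(\ZZ)$ applied to the weight-$2kd$ form $g$ (all of whose local orders are non-negative) gives $\ord(g)\le\frac{2kd}{12}=dk/6$. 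Combining, $\ord(f)\le dk/6$; equivalently $\ord(f)>dk/6$ forces $f=0$. (Alternatively one could invoke the valence formula for $\Gamma$ directly, whose right-hand side is already $dk/6$, but the reduction to $\SL_2(\ZZ)$ is cleaner.)

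I expect the only genuine subtlety to be the cusp-width bookkeeping in the third step: the transversal must be chosen so that the factors $f|_{2k}T^j$ with $0\le j<N$ appear, since only then does the cusp $\infty$ contribute the full $\ord(f)$ — rather than just $\ord(f)/N$ — to $\ord(g)$. The remaining ingredient, the valence formula on $X(1)$, is classical: it is the residue theorem on $X(1)\cong\PP^1_{\CC}$, equivalently the computation $\deg\omega^{\otimes k}=k/6$.
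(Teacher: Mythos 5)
Your proof is correct. The reduction from $\Gamma$ to $\SL_2(\ZZ)$ via the norm $g=\prod_i f|_{2k}\gamma_i$ is exactly the paper's second step, and you additionally supply the cusp-width bookkeeping behind the inequality $\ord(g)\ge\ord(f)$, which the paper asserts without comment. Where you diverge is in the level-one case: you invoke the classical valence formula on $X(1)$, with its fractional weights $\tfrac12$ and $\tfrac13$ at the elliptic points, whereas the paper deliberately sidesteps the orbifold points by passing to the smooth curve $Y(\Gamma(N))$ for an auxiliary $N\ge3$, realizing $f\,(dz)^{\otimes k}/\omega^{\otimes k}$ as an element of $\mathcal{L}\bigl(k(K+\sum\sigma_i)-\ord(f)N\sum\sigma_i\bigr)$ and checking that this divisor has negative degree. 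The two computations are equivalent --- both amount to the degree of the $k$-th power of the log-canonical bundle being $k/6$ --- but the paper's formulation is the one that transports to Hilbert modular surfaces, where ``negative degree'' is replaced by negative intersection with a NEF divisor; your valence-formula route is more elementary and self-contained for the classical statement, at the cost of not being the template the rest of the paper builds on.
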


Note that this bound is somehow optimal, since for $\SL_2(\ZZ)$, the
number of conditions ``coincides'' (up to 1 depending in the
congruence of the weight 
modulo $12$) with the dimension of the space $M_{2k}(\SL_2(\ZZ))$. 

One can consider the same problem with congruence conditions instead
of vanishing conditions. Let $\Om$ be the ring of integers of a number
field $F$, and $\id{m}$ a maximal ideal of $\Om$. We fix an embedding
$F\subset \CC$. As before, let $f(z) = \sum_{n \ge 0} a_n(f) q^n \in
M_{2k}(\Gamma)$ be a modular form such that $a_n(f) \in \Om$ for all
$n\ge 0$. Then we define
\begin{displaymath}
  \ord_{\id{m}}(f)=\inf\{n \mid a_{n}(f)\not \in \id{m}\},
\end{displaymath}
with the convention $\ord_{\id{m}}(f)=\infty$ if $a_{n}(f)\in \id{m}$
for all $n$.
\begin{thm*}[Sturm] 
If $\ord_{\id{m}}(f)>dk/6$, then $\ord_{\id{m}}(f)=\infty$.
\end{thm*}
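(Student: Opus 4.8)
The plan is to deduce Sturm's bound from Hecke's bound by reducing modulo $\id{m}$ and reinterpreting the reduction geometrically in characteristic $p$, where $p$ denotes the rational prime below $\id{m}$. Write $\bar f=\sum_{n\ge0}\bar a_n q^n$ for the reduction of $f$ modulo $\id{m}$ (so $\bar a_n$ is the image of $a_n(f)$ in $\Om/\id{m}$). By the very definitions, $\ord_{\id{m}}(f)=\ord(\bar f)$, the order of vanishing of $\bar f$ as a formal power series, and $\ord_{\id{m}}(f)=\infty$ is equivalent to $\bar f=0$; so the assertion to prove is that $f\in M_{2k}(\Gamma;\Om)$ together with $\ord(\bar f)>dk/6$ force $\bar f=0$. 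By the $q$-expansion principle, $f$ is the $q$-expansion of a geometric modular form of weight $2k$ on $\Gamma$ defined over $\Om_{(\id{m})}$ (after enlarging $\Om$, if necessary, so that it contains the roots of unity needed to rigidify the moduli problem for $\Gamma$); reducing modulo a prime above $\id{m}$, we see that $\bar f$ is the $q$-expansion, at a cusp, of a geometric modular form of weight $2k$ on $\Gamma$ in characteristic $p$, i.e.\ of a global section of $\omega^{\otimes 2k}$ on the complete modular curve $X(\Gamma)$ over a finite field of characteristic $p$, where $\omega$ is the Hodge bundle.

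Assume first that $p\nmid N$, where $N$ is a level of $\Gamma$ (so $\Gamma\supseteq\Gamma(N)$). Then $X(\Gamma)$ has good reduction at $p$ and $\omega$ extends to a line bundle over the corresponding proper flat family over $\ZZ[1/N]$, so that $\deg(\omega^{\otimes 2k})$ takes the same value in the generic and the special fibres. Since a nonzero global section of a line bundle on a complete curve vanishes at any point to order at most the degree of the bundle, either $\bar f=0$ or $\ord(\bar f)\le\deg(\omega^{\otimes 2k})$. In characteristic $0$ this inequality is exactly Hecke's theorem, so the degree in question equals the bound $dk/6$; by flatness it has the same value in characteristic $p$. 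Hence either $\bar f=0$ or $\ord(\bar f)\le dk/6$, and the hypothesis $\ord(\bar f)>dk/6$ rules out the second alternative; therefore $\bar f=0$, i.e.\ $\ord_{\id{m}}(f)=\infty$.

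The main obstacle is the remaining case $p\mid N$: then $X(\Gamma)$ does not have good reduction at $p$, and $\bar f$ is no longer the $q$-expansion of a section of a line bundle on a smooth curve. The standard device is to trade the level structure at $p$ for extra weight without altering the $q$-expansion: using that the Hasse invariant (a mod-$p$ modular form of level $1$ and weight $p-1$) has $q$-expansion identically $1$, and that a $\Gamma_1(p^a)$-structure can be absorbed up to $q$-expansion in characteristic $p$, one replaces $\bar f$ by a mod-$p$ modular form of some weight $2k'$ on a congruence subgroup $\Gamma'$ of level prime to $p$ with the same $q$-expansion. The good-reduction case applied to $\Gamma'$ then gives $\ord(\bar f)\le d'k'/6$, where $d'$ is the index of the image of $\Gamma'$ in $\PSL_2(\ZZ)$, and one must check that the increase in weight is always dominated by the drop in index, that is $d'k'\le dk$. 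This bookkeeping, together with the ad hoc adjustments for $p\in\{2,3\}$ (where the Eisenstein series $E_{p-1}$ is replaced by $E_4$ and $E_6$) and the treatment of the stacky points of $X(1)$ (handled by working throughout over a sufficiently fine auxiliary level structure), is the delicate part of the argument. Granting it, $\ord(\bar f)\le dk/6$ whenever $\bar f\ne0$, which contradicts the hypothesis and forces $\bar f=0$, completing the proof.
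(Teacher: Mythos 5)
Your good-reduction case ($p\nmid N$) is sound and is essentially the paper's argument: spread $f$ out over $\ZZ[1/N]$ via the $q$-expansion principle, reduce to the special fibre, and use that a nonzero section of a line bundle on a complete curve cannot vanish at a point to order exceeding the degree, which is computed in characteristic $0$ and preserved by flatness. The problem is the case $p\mid N$, which you correctly identify as the obstacle but then only sketch and ultimately assume ("Granting it\dots"). That is a genuine gap: the device of multiplying by powers of the Hasse invariant and absorbing the $p$-part of the level changes the weight by multiples of $p-1$, the exceptional primes $p=2,3$ need separate treatment, and the inequality $d'k'\le dk$ that your argument hinges on is asserted without proof and is not obviously true in the generality you need. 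As written, the theorem is only proved for $\id{m}$ prime to the level of $\Gamma$.

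The paper avoids this case entirely by a different reduction, and you should note it because it makes the hard half of your argument unnecessary. Given $f\in M_{2k}(\Gamma)$, form the norm
\begin{displaymath}
  g=\prod_{\gamma \in \PGamma \backslash \PSL_{2}(\ZZ)}f|_{2k}[\gamma]\ \in\ M_{2kd}(\SL_{2}(\ZZ)),
\end{displaymath}
which satisfies $\ord_{\id{m}}(g)\ge \ord_{\id{m}}(f)$ (after enlarging $\Om$ so that the coefficients of the conjugates $f|_{2k}[\gamma]$ are $\id{m}$-integral). Since $g$ has level $1$, one is then free to choose the auxiliary rigidifying level $N\ge 3$ with $p\nmid N$ and run exactly your good-reduction argument on $Y(\Gamma(N))$: the hypothesis $\ord_{\id{m}}(f)>dk/6$ gives $\ord_{\id{m}}(g)>(kd)/6$, forcing the reduction of $g$, hence that of $f$, to vanish. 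In short: trade the level for weight \emph{before} reducing mod $\id{m}$ by a group-theoretic product, rather than \emph{after} reducing by the Hasse-invariant trick; the former costs nothing and sidesteps all of the bad-reduction bookkeeping. (This norm trick is also what the paper later imitates in the Hilbert case to pass from a general congruence subgroup to $\Gamma(\Om_K,\id{a})$.)
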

The main results of this article are generalizations of both results
to Hilbert modular forms over real quadratic fields. Given a real
quadratic field $K$, a congruence subgroup $\Gamma$ and weights
$(k_1,k_2)$, we give an algorithm to construct a constant $a$,
depending on invariants of the field as well as the congruence
subgroup and the weights, such that if a Hilbert modular form for
$\Gamma$ of weight $(k_1,k_2)$ has order of vanishing at a cusp
greater than $a$, then its order of vanishing is $\infty$. We
furthermore, relate the order of vanishing with Fourier expansions,
i.e. we construct a finite set of elements which determine a form
uniquely by looking at its Fourier coefficients on this set.  The main
idea of the proof is to mimic the geometric proof for classical
modular forms (which is given in the first section) in these new
setting. For that purpose we need something which looks like the
degree function, whose role will be played by a numerical effective
divisor (NEF) in our surface whose intersection number with the cusp
resolutions is non-zero. The first application of this method was
presented as an appendix in~\cite{DPS}, where using a similar approach
we gave a Sturm/Hecke bound for $K=\Q(\sqrt{5})$, level
$\Gamma_0(12\sqrt{5})$ and parallel weight $2$.

The article is organized as follows: in the first section we give a
proof of the classical Hecke and Sturm theorems that although is not
the original one, it is standard and generalizable to our setting.

In the second section, we recall the main properties and definitions
of Hilbert modular surfaces, their desingularization and their
classification. We also give criterions to decide for a particular
level, if the given surface is in minimal model and is of general
type.

In the third section, we recall the main properties of Hilbert modular
forms over real quadratic fields, and we prove the relation between
the order of vanishing at a cusp and vanishing of Fourier expansion coefficients.

In the fourth section we state and prove the analogue of Hecke's
Theorem for parallel weight $2$ Hilbert modular forms over real
quadratic fields with maximal level structure. The statement is self
contained (so there is no need to read the previous sections to
understand the statement) but the proof uses the discussions of the
previous sections.

In the fifth section we adapt the proof of the previous section to
prove the analogue of Sturm's Theorem for parallel weight $2$ Hilbert
modular forms 
over real quadratic fields with maximal level structure. The statement
is the same in both cases, but the proof in this case uses the integral
structure of the modular surfaces.

The sixth section contains statements and proofs for arbitrary weights
and levels and some remarks about its effectiveness. The last section
contains examples of the method as well as some tables comparing the
dimension of the spaces involved and the number of Fourier
coefficients needed using our results in each case.

We end the article with two appendices, the first one explains the
cusp desingularization algorithm needed for the Hecke and Sturm
theorems and the second one treats the real quadratic fields not covered
by the method described in the previous sections.

\section{A geometric proof  of Hecke and Sturm theorems}
We want to sketch well known proofs of Hecke and Sturm theorems that,
although are different than the original proofs of Hecke and Sturm,
are generalizable to higher dimensions.

Recall the following facts about divisors. Let $C$ be a curve defined
over a field $F$.
\begin{itemize}
\item The group of divisors of $C$ is the free abelian group
  generated by the closed points of $C$ (so elements are of the form
  $D = \sum n_p [P]$).  
\item The divisor $D$ is called effective if $n_P \ge 0$ for all $P$.
\item Let $\Kfield(C)$ be the field of rational functions on $C$. To a
  divisor $D$ of $C$ we can associate the (finite 
  dimensional) vector space 
\[
{\mathcal{L}}(D) = \{f\in \Kfield(C) \mid \Div(f) \ge -D\} \cup \{0\}.
\]
\item The degree of the  divisor $D=\sum n_{p}[P]$ is defined as
  \begin{displaymath}
    \deg(D)=\sum n_{P}[k(P):F],
  \end{displaymath}
where $k(P)$ is the residue field at $P$.
If $\deg(D) <0$ then ${\mathcal{L}}(D) = \{0\}$.
\end{itemize}

We start by proving the Hecke bound for $\Gamma =\SL_2(\ZZ)$. Choose
$N\ge 3$. Then the modular curve $Y(\Gamma (N))$ is a smooth compact
complex curve.

Let $g$ be
the genus of $Y(\Gamma(N))$ and $c$ the number of cusps. Denote the
different cusps of $Y(\Gamma(N) )$ by $\sigma _{1},\dots,\sigma _{c}$.

Choose a rational differential form $\omega $ in $Y(\Gamma(N))$ and
let $K=\Div(\omega )$ be the corresponding canonical divisor. 
If $f(z) \in M_{2k}(\SL_{2}(\ZZ))$, then 
$\frac{f (dz)^{\otimes 
    k}}{\omega ^{\otimes k}}$ is a well defined rational function that
belongs to the space
\[
\mathcal{L}\left(k(K +\sum_{i=1}^{c}[\sigma _{i}])
  -\ord(f)N\sum_{i=1}^{c}[\sigma _{i}]\right).
\]
The degree of the divisor $D:=k(K +\sum_{i=1}^{c}[\sigma _{i}])
-\ord(f)N\sum_{i=1}^{c}[\sigma _{i}]$ is given by
$k(2g-2+c)-\ord(f)Nc$. Since, $2g-2+c=Nc/6$ \, and, by hypothesis,
$\ord(f)>k/6$, we conclude that 
$\deg(D)<0$, hence $f=0$.

We next prove the Sturm bound for $\Gamma =\SL_{2}(\ZZ)$.
Let $p =
\id{m}\cap \ZZ$. Choose $N$ such 
that $N\ge 3$ and $p \nmid N$. Let $\zeta_{N}$ be a primitive $N$-th
root of unity. Let $F'=F[\zeta_{N}]$ and
$\Om'$ the ring of integers of $F'$. Since $\Om'$ is integral over
$\Om$, there exists a prime ideal $\id{m}'$ of $\Om'$ such that $\id{m'}\cap
\Om=\id{m}$. Hence, if $\ord_{\id{m}'}(f)=\infty$, then
$\ord_{\id{m}}(f)=\infty$. Thus, replacing $\Om$ by $\Om'$, we may
assume without loss of 
generality that $\zeta_{N}\in \Om$. 

Since $\zeta_{N}\in \Om$, the curve
$Y(\Gamma(N) )$ has an integral smooth 
model over $S=\Spec (\mathcal{O}[1/N])$, denoted $\specY(\Gamma(N)
)$ and each cusp $\sigma_{i}$ of  $Y(\Gamma(N) )$ determines a section
$\overline \sigma _{i}\colon S\to \specY(\Gamma(N)
)$, hence a horizontal divisor, also denoted by  $\overline \sigma
_{i}$. Let $\bfK$ be
the relative canonical divisor of $\specY(\Gamma(N) )/S$. The
$q$-expansion principle \cite[Corollary 1.6.2]{Katz}, implies that $f$
determines a section, also denoted $f$,  of $\mathcal{O}_{\specY(\Gamma(N)
)}(k(\bfK+\sum
\overline \sigma 
_{i}))$. Let $\specY(\Gamma(N) )_{\id{m}}$ be the fiber of
$\specY(\Gamma (N)
)$ over $\id{m}$. It is a smooth curve over the field $k(\id{m})$. The
restriction of $\bfK$ to this curve agrees with its canonical
divisor, denoted $\bfK_{\id{m}}$. We denote by $\overline \sigma
_{i,\id{m}}$ the restriction of the horizontal divisor $\overline
\sigma _{i}$ to $\specY(\Gamma )_{\id{m}}$. Since $\overline
\sigma _{i}$ is given by a section,
the divisor $\overline \sigma _{i,\id{m}}$ is prime and satisfies $k(\overline
\sigma _{i,\id{m}})=k(\id{m})$. The hypothesis of the
theorem imply that the restriction of 
$f$ to $\specY(\Gamma )_{\id{m}}$ determines an element of
\[
\mathcal{L}\left(k(\bfK_{\id{m}} +\sum_{i=1}^{c}[\overline\sigma
  _{i,\id{m}}]) -\ord_{\id{m}}(f)N\sum_{i=1}^{c}[\overline\sigma
  _{1,\id{m}}]\right). 
\]
By the same argument as before this restriction is zero, thus
$\ord_{\id{m}}(f)=\infty$. 

Let now $\Gamma $ be a congruence subgroup. Any element $\gamma \in
\PSL_{2}(\ZZ)$ acts on $M_{2k}(\Gamma )$ by $f\mapsto f|_{2k}[\gamma
]$ and the elements $\gamma \in \PGamma $ act trivially. Let $f(z)$ be
as in Hecke's Theorem. Write
\begin{displaymath}
  g=\prod_{\gamma \in \PGamma \backslash \PSL_{2}(\ZZ)}f|_{2k}[\gamma]. 
\end{displaymath}
Then $g\in M_{2kd}(\SL_{2}(\ZZ))$ and $\ord(g)\ge\ord(f)$. Thus, if
$\ord(f)>kd/6$ we deduce that $g=0$ and a fortiori $f=0$. The same
argument proves the Sturm bound. 

\section{Hilbert modular surfaces}
\subsection{Basic definitions and notations}
Let $D>0$ be a fundamental discriminant, $K=\Q(\sqrt{D})$ the real
quadratic field of discriminant $D$ (which we think of inside the real
numbers), $\Om_K$ its ring of integers and $\delta$ the different of
$\Om_K$. If $\alpha \in K$, we denote by $\alpha'$ its conjugate under
the action of the generator of $\Gal(K/\Q)$. An element $\alpha \in
K$ is called \emph{totally positive} (and denoted $\alpha \gg 0$) if
$\alpha >0$ and $\alpha'>0$.

If $\id{a} \subset K$ is a fractional ideal, we denote by
$\Gamma(\Om_K,\id{a})$ the image in $\PGL_2^+(K)$ of the group
\[
\SL_2(\Om_K,\id{a})=\left\{m \in\left(\begin{array}{cc}
\Om_K & \id{a}^{-1}\\
\id{a} & \Om_K 
\end{array}
\right) \; : \; \det(m)=1\right \}.
\]

If $\id{c}$ is an integral ideal in $K$, we denote by $\Gamma(\id{c},\id{a})$
the image in $\Gamma(\Om_K,\id{a})$ of the group
\[
\left\{ \left(\begin{array} {cc}
\alpha & \beta\\
\gamma & \delta \end{array}
\right) \in \SL_2(\Om_K,\id{a}) \; : \; \alpha \equiv \delta \equiv 1 \pmod{\id{c}}, \beta \in \id{c}\id{a}^{-1},\gamma \in \id{c} \id{a}
\right \}.
\]

A \emph{congruence subgroup} $\Gamma_{\id{a}} \subset \Gamma(\Om_K,\id{a})$ is
a subgroup which contains $\Gamma(\id{c},\id{a})$ for some ideal $\id{c}$.

\medskip

The group $\GL_2^+(K)$ acts on $\Siegel^2$ via 
\[
\left(\begin{array}{cc}
\alpha & \beta\\
\gamma & \delta \end{array} \right) (z_1,z_2) = \left(\frac{\alpha z_1+\beta}{\gamma z_1+\delta},\frac{\alpha' z_2 +\beta'}{\gamma'z_2+\delta'}\right).
\]
Since the center acts trivially, we can consider the action of
$\PGL_2^+(K)$. 

If $\Gamma$ is a congruence subgroup, the quotient
$\Gamma \backslash \Siegel^2$ is a quasi-projective variety with at
most quotient singularities. The
Baily-Borel compactification of such quotient, which we denote
$X_\Gamma$, is obtained as in the classical case by adding the cusps
$\PP^1(K)$ to the product of two copies of the upper half plane, i.e. $X_\Gamma =
\Gamma\backslash (\Siegel^2 \cup \PP^1(K))$. It is a projective variety.

We denote by $Y_\Gamma$ the minimal desingularization of $X_\Gamma$
and by $Z_\Gamma$ the surface obtained by resolving only the cusp
singularities of $X_\Gamma$ which we study in the next sections.

\subsection{On Cusp Resolution} \label{sec:cusp-resolution}
We briefly recall the cusp desingularization at infinity. 
For this section we follow closely the exposition of \cite{VDG}.  If
$M$ is a lattice in $K$, we denote by $U_M^+$ the group (under
multiplication) of totally positive elements $\epsilon \in K$ such
that $\epsilon M = M$. Let $V \subset
U_M^+$ be a subgroup of finite index. We define
\[
G(M,V) = \left\{\left(\begin{array}{cc} \epsilon & m\\
0 & 1 \end{array} \right) \,: \, \epsilon \in V \, , \, m \in M\right\}=M \rtimes V.
\]

If we denote by $U_{\Om_{K},\id{c}}$ the set of units
of $\Om_{K}$ that are congruent to $1$ modulo $\id{c}$, for the
particular congruence subgroups we will consider, we have the
following result.
\begin{lemma} The isotropy group of
the cusp corresponding to  $(\alpha:\beta) \in
\PP^1(K)$ in $\Gamma(\id{c},\id{a})$ is conjugate to the image in
$\PGL_2^+(K)$ of
$$G(\id{a}^{-1}\id{b}^{-2}\id{c},U_{\Om_{K},\id{c}}^2),$$ where $\id{b}
= \alpha \Om_K + \beta \id{a}^{-1}$.
\end{lemma}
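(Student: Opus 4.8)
The plan is to reduce the general cusp $(\alpha:\beta)$ to the cusp at infinity by an explicit element of $\GL_2^+(K)$, and then read off the isotropy subgroup of $\infty$ directly. First I would choose $\gamma,\delta \in K$ so that $M = \begin{pmatrix} \alpha & \gamma \\ \beta & \delta \end{pmatrix}$ has determinant $1$ (this is possible precisely because $\id{b} = \alpha\Om_K + \beta\id{a}^{-1}$ is the natural fractional ideal attached to the point, and one can solve $\alpha\delta - \beta\gamma = 1$ with $\delta \in \id{b}^{-1}$, $\gamma \in \id{b}^{-1}\id{a}^{-1}$). Then $M \cdot \infty = (\alpha:\beta)$, so conjugation by $M^{-1}$ carries the isotropy group of $(\alpha:\beta)$ in $\Gamma(\id{c},\id{a})$ isomorphically onto the isotropy group of $\infty$ in $M^{-1}\Gamma(\id{c},\id{a})M$, and it remains to identify this latter group.

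Next I would compute $M^{-1}\,\Gamma(\id{c},\id{a})\,M$ explicitly. The stabilizer of $\infty$ inside $\PGL_2^+(K)$ consists of upper-triangular matrices, which after normalizing the lower-right entry to $1$ have the shape $\begin{pmatrix} \epsilon & m \\ 0 & 1\end{pmatrix}$; so the task is to determine, for which totally positive units $\epsilon$ and which $m \in K$ such a matrix lies in $M^{-1}\Gamma(\id{c},\id{a})M$. Conjugating an upper triangular matrix $\begin{pmatrix}\epsilon & m\\0&1\end{pmatrix}$ back by $M$ and imposing the defining congruence and integrality conditions of $\Gamma(\id{c},\id{a})$ — namely the diagonal entries $\equiv 1 \pmod{\id{c}}$, the upper-right entry in $\id{c}\id{a}^{-1}$, the lower-left entry in $\id{c}\id{a}$ — one finds after a short calculation that $\epsilon$ must be a square of a unit congruent to $1$ modulo $\id{c}$ (the square appearing because the two diagonal entries of the conjugate contribute $\epsilon$ and $\epsilon^{-1}$ type factors, forcing $\epsilon \in U_{\Om_K,\id{c}}^2$), and that $m$ ranges over the lattice $\id{a}^{-1}\id{b}^{-2}\id{c}$. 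This exhibits the isotropy group of $\infty$ as $G(\id{a}^{-1}\id{b}^{-2}\id{c}, U_{\Om_K,\id{c}}^2)$, as claimed.

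The main obstacle is the bookkeeping in the second step: tracking precisely which fractional ideal each matrix entry lands in after conjugation, and verifying that the congruence condition $\alpha \equiv \delta \equiv 1 \pmod{\id{c}}$ transforms into exactly the square-of-units condition and the $\id{c}$-twist of the translation lattice, with no extraneous factors. One has to be careful that $\id{b}$ is only well-defined as a class up to the choice of representative $(\alpha:\beta)$, but the ideal $\id{a}^{-1}\id{b}^{-2}\id{c}$ and the conjugacy class of $G(\id{a}^{-1}\id{b}^{-2}\id{c},U_{\Om_K,\id{c}}^2)$ are independent of that choice — this is why the statement only asserts conjugacy rather than equality. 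I would organize the computation so that the determinant-one normalization handles the ``$\id{b}^{-2}$'' automatically, which keeps the ideal arithmetic transparent; everything else is then a finite check on the three types of entries.
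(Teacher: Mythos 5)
Your plan is correct and is essentially the argument behind the reference the paper itself cites ([VDG, Lemma 5.2]): move $(\alpha:\beta)$ to $\infty$ by a determinant-one matrix with $\delta\in\id{b}^{-1}$, $\gamma\in\id{a}^{-1}\id{b}^{-1}$, and read off the upper-triangular stabilizer, the square on the units coming from passing from $\SL_2$ to its image in $\PGL_2^+$. The one computation worth making explicit when you write it up is the ideal identity $\id{a}\id{b}^{2}=\alpha^{2}\id{a}+\alpha\beta\,\Om_K+\beta^{2}\id{a}^{-1}$, which is exactly what turns the three entrywise conditions on $m$ into the single condition $m\in\id{a}^{-1}\id{b}^{-2}\id{c}$.
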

\begin{proof}
See the proof of Lemma 5.2 in \cite{VDG} (p. 78).
\end{proof}
In particular the isotropy of the infinity cusp (corresponding to
$(1:0)$ which we denote $\infty$) equals  the image in $\PGL_2^+(K)$
of the group $G(\id{a}^{-1}\id{c},U_{\Om_K,\id{c}}^2)$.
\medskip

We consider the group $\Gamma (\id{c},\id{a})$.
Let $M=\id{a}^{-1}\id{c}\subset K\subset\R$ be the lattice corresponding to the
stabilizer of the 
$\infty$-cusp. It acts on $\CC^2$ by translation, i.e. $m \cdot
(z_1,z_2)=(z_1+m,z_2+m')$. A choice of basis $\{\mu_1,\mu_2\}$ of $M$
determines an isomorphism
\[
\phi_{\mu_1,\mu_2}\colon M \backslash \CC^2 \rightarrow \CC^\times \times
\CC^\times, \qquad (z_1,z_2) \mapsto (u,v),
\]
where $\exp(2 \pi i z_1) = u ^{\mu_1}v^{\mu_2}$ and $\exp(2 \pi i z_2) = u
^{\mu_1'}v^{\mu_2'}$. 
A different choice of a basis is given by a matrix
$\left(\begin{smallmatrix}a&b\\c&d \end{smallmatrix}\right) \in
\GL_2(\ZZ)$ which induces the biholomorphic map $\psi \colon
\CC^{\times}\times \CC^{\times}\to \CC^{\times}\times \CC^{\times}$
given by
\[
(u,v) \mapsto (u^av^b,u^cv^d).
\]

We can always choose a basis of $M$ formed by totally
positive elements $\mu _{1},\mu _{2}\gg 0$. In this case, if
$\img(z_{1})$ and $\img(z_{2})$ tends to infinity (that is $(z_1,z_2)$
approaches the infinity cusp)
when at least one of $u$ or $v$ approaches $0$. Thus it is natural to
consider the
embedding $\CC^\times
\times \CC^\times \subset \CC^2$.

The map $\psi $ can be extended to an open subset of $\CC^{2}$ such
that its graph inside $\CC^2 \times 
\CC^2$ is closed. Therefore if we use it to glue together two copies of
$\CC^2$ we obtain a Hausdorff space. 

Let $M_+$ denote the elements of $M$ which are totally positive, and
consider the embedding of $M_+$ in $(\RR_+)^2$, given by
\[
m \mapsto (m,m').
\]
Denote by $A_j=(A_j^1,A_j^2)$, $j \in \Z$ the vertices of the boundary
of the convex hull of the image of $M_+$, ordered with the condition
$A_{j+1}^1 < A_j^1$ for all $j$. Any pair $(A_{j-1},A_j)$ is a basis
for $M$ as $\ZZ$-module (see \cite{VDG} Lemma 2.1).
In Appendix A we
describe the algorithm to compute such bases. 

Let $\sigma_j$ denote the cone spanned by $A_{j-1}$ and $A_j$, i.e.
\[
\sigma_j=\{s A_{j-1} + tA_j \; : \; s,t \in \R_+\}.
\]
We obtain a partial compactification of $M\backslash \CC^{2}$ by taking a 
copy of $\CC^2$ for each element $\sigma_j$ and gluing them together
in terms of the change of basis matrix (see \cite{VDG} page 31). By
the above comment we obtain a Hausdorff space. Hence we obtain a
partial compactification of $M\backslash \Siegel^{2}$ denoted
$Y^{+}$. 
Then $Y^{+}=M\backslash \Siegel^{2}\cup \bigcup_{j\in \ZZ}S'_{\infty,j}$, where each
$S'_{\infty,j}$ is a rational curve.
The space $Y^{+}$
is a Hausdorff space. The group of units $U_{\Om_K,\id{c}}^2$ acts
freely and properly discontinuously on $Y^{+}$ (\cite{VDG} Lemma
3.1 page 34). A local description of the desingularization of the
infinity cusp is obtained by taking the quotient of $Y^{+}$ by
$U_{\Om_K,\id{c}}^2$. Let $S_{\infty}$ denote the resolution divisor
of the infinite cusp and let $\{S_{\infty,j}\}_{j}$ be its irreducible
components. Then there is a one to one correspondence between the set
of classes of vertices $A_{j}$ under the action of
$U_{\Om_K,\id{c}}^2$ and the set of irreducible components of the resolution divisor
of the infinity cusp, and each irrecucible component is a rational curve.

Recall that we denote
$Z_\Gamma$ the desingularization obtained by applying this process to
each cusp of $X_\Gamma$.

If we apply the previous process to $M=\id{a}^{-1}$ and
$M=\id{a}^{-1}n$, where $n$ is a positive integer, since the two
lattices are homothetic, each choice of basis for $\id{a}^{-1}$ gives
a basis for $\id{a}^{-1}n$ and we get an holomorphic map between the
respective affine spaces given by sending $(u,v)$ to $(u^n,v^n)$. This
map is well behaved under gluing which gives a map
\[
\pi\colon Z_{\Gamma((n),\id{a})} \to Z_{\Gamma(\Om_K,\id{a})}.
\]
\begin{remark} \label{rem:ramification}
Let $E$ be a component of a cusp resolution of
$Z_{\Gamma((n),\id{a})}$ and $E'$ its image under $\pi$. It is clear
from this description  that the 
map between $E$ and $E'$ induced by $\pi $ has degree $n$ and $\pi$ is
ramified over 
$E'$ with ramification degree $n$ as well.
\end{remark}

\subsection{Algebraic Surfaces} 
Algebraic surfaces with vanishing irregularity are divided in four
types, one of them being of general type. For reasons that will become
clear later, it is this kind of surfaces the ones we need to work
with.

\begin{remark}\label{rem:ellipticpoints}
  If $\id{c}\subsetneq \Om_K$ is an integral ideal in $\Om_K$ with
  $\id{c}^2 \neq (2)$ and $\id{c}^2 \neq (3)$ then
  $X_{\Gamma(\id{c},\id{a})}$ has no elliptic points (see \cite{VDG}
  page 109).  In particular, in these cases, the surfaces
  $Z_{\Gamma(\id{c},\id{a})}$ and $Y_{\Gamma(\id{c},\id{a})}$ are the
  same.
\end{remark}

Recall the following classification.

\begin{thm}\label{thm:rationalsurfaces}
The Hilbert modular surface $Y_{\Gamma(\Om_K,\id{a})}$ is rational
for
\begin{itemize}
\item $D=5, 8, 12, 13, 17, 21, 24, 28, 33, 60$ if $\id{a}$ is in the
  principal genus.
\item $D=12$ if $\id{a}$ is not in the principal genus.
\end{itemize}
\end{thm}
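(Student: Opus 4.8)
The plan is to treat this as an application of Castelnuovo's rationality criterion, following the classical analysis of Hirzebruch reproduced in \cite[Ch.~VII]{VDG}; since the statement says ``recall'', the honest option is to cite that account, but here is the strategy one would follow to reprove it. The first point to record is that every Hilbert modular surface has vanishing irregularity, $q(Y_{\Gamma(\Om_K,\id{a})})=0$: the space of holomorphic $1$-forms on $Y_{\Gamma(\Om_K,\id{a})}$ is built from automorphic cohomology that vanishes for the full modular group. Consequently, by Castelnuovo's criterion, proving rationality amounts to showing that the second plurigenus $P_2$ vanishes, or equivalently that the Kodaira dimension is $-\infty$ together with $q=0$.

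Next I would compute the relevant numerical invariants from the resolution data of Section~\ref{sec:cusp-resolution}. Using the volume formula $\Vol(\Gamma(\Om_K,\id{a})\backslash\Siegel^2)=2\zeta_K(-1)$ in the standard normalization, Hirzebruch proportionality, and Noether's formula, one obtains $\chi(\mathcal{O}_{Y_{\Gamma(\Om_K,\id{a})}})=1$ and an explicit value of $K^2$, once the contributions of the cusp resolution cycles $\{S_{\infty,j}\}$ (with self-intersections $\le -2$) and of the resolved elliptic points have been subtracted. Since the canonical bundle of $Y_{\Gamma(\Om_K,\id{a})}$ differs from the line bundle of weight-$(2,2)$ Hilbert modular forms only by a divisor supported on the exceptional locus of the resolution, the plurigenus $P_2=h^0(2K)$ is bounded above by $\dim M_{(4,4)}(\Gamma(\Om_K,\id{a}))$ minus the discrepancy corrections along those exceptional curves. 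For each discriminant in the stated list --- and, in the $D=12$ case, for each genus class of the polarization module $\id{a}$ --- one checks that this bound is $0$, so $P_2=0$ and $Y_{\Gamma(\Om_K,\id{a})}$ is rational.

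An equivalent and more hands-on route, also carried out in \cite{VDG}, is to exhibit on $Y_{\Gamma(\Om_K,\id{a})}$ an explicit configuration of rational curves --- the cusp resolution cycles together with modular curves $F_N$ on the surface --- whose successive contraction produces $\PP^2$ or a Hirzebruch surface; this simultaneously proves rationality and identifies the minimal model.

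The main obstacle is computational rather than conceptual: it is the case-by-case bookkeeping of the cusp continued-fraction data (the vertices $A_j$ and cones $\sigma_j$ of Section~\ref{sec:cusp-resolution}) and of the resolutions of the elliptic points for each of the finitely many discriminants, needed to pin down $K^2$ and the discrepancy divisors precisely enough to force $P_2=0$. Since all of this is tabulated in \cite{VDG}, I would simply invoke it; a self-contained proof would amount to reproducing those tables.
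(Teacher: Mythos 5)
Your proposal is correct and, in the end, does exactly what the paper does: the paper's entire proof is the citation of Theorem 3.3 in Chapter VII of \cite{VDG}, and you rightly conclude that invoking that reference is the appropriate move here. Your accompanying sketch (vanishing irregularity plus Castelnuovo's criterion via $P_2=0$, with the numerical invariants extracted from the cusp and elliptic-point resolution data, or alternatively blowing down explicit configurations of rational curves) is a faithful summary of how the cited classification is actually established, so there is nothing to object to.
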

\begin{proof}
This is Theorem $3.3$ of \cite{VDG}, Chapter VII p. 166.
\end{proof}

\begin{thm}\label{thm:generaltype}
  The Hilbert modular surface $Y_{\Gamma(\id{c},\id{a})}$, with
  $\id{c} \neq \Om_K$ and $\id{a}$ in the genus $\gamma$ is of general
  type except in the following cases:
\begin{table}[h]
\begin{tabular}{|c|c|c||c|c|c|}
\hline
$D$ & $\normid{c}$ &$\gamma$ & $D$ & $\normid{c}$ & $\gamma$\\
\hline
$5$ & $\{4,5\}$ &$+$& $8$ & $\{2,4\}$ & $+$\\
\hline
$12$ & $\{2,3,4,6\}$&$+,+$ & $12$ & $\{2,3\}$& $-,-$\\
\hline
$13$ & $\{3\}$ &$+$&$17$ & $\{2\}$ &$+$\\
\hline
$21$ & $\{3\}$&$-,-$&$24$ & $\{2\}$&$-,-$\\ 
\hline
$24$ & $\{3\}$&$+,+$& $28$ & $\{2\}$&$+,+$\\
\hline
$28$ & $\{3\}$&$-,-$&$33$ & $\{2\}$ &$-,-$\\
\hline
\end{tabular}
\end{table}

Furthermore, if $D>500$, then $Y_{\Gamma(\Om_K,\id{a})}$ is of general type as well.
\end{thm}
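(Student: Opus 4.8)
The plan is to deduce the result from the Enriques--Kodaira classification of compact complex surfaces, using Hirzebruch's explicit formulas for the Chern numbers of Hilbert modular surfaces. The starting point is the vanishing of the irregularity, $q(Y_\Gamma)=0$, valid for every congruence subgroup $\Gamma$ (see \cite{VDG}). Consequently $Y_\Gamma$ is birationally equivalent to exactly one of: a rational surface (then $\chi(\Om_{Y_\Gamma})=1$), an Enriques surface ($\chi=1$, minimal model with $K^2=0$), a K3 surface ($\chi=2$, $K^2=0$), a properly elliptic surface (minimal model with $K^2=0$), or a surface of general type; the abelian, bielliptic and irrational ruled cases are excluded by $q=0$. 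Since passing to the minimal model only increases $K^2$, it therefore suffices to prove that
\[
\chi(\Om_{Y_{\Gamma(\id{c},\id{a})}})\ge 2\qquad\text{and}\qquad K^2_{Y_{\Gamma(\id{c},\id{a})}}>0
\]
for every pair $(D,\id{c})$ with $\id{a}$ in the prescribed genus, outside the finite list in the table: these two inequalities (together with $q=0$) force $\kappa(Y_{\Gamma(\id{c},\id{a})})=2$, because $\chi\ge2$ rules out the rational and Enriques cases and $K^2>0$ rules out the (blown-up) K3 and properly elliptic cases.

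To estimate the two invariants I would use the standard formulas (see \cite{VDG}, Chapters IV--V): for a torsion-free $\Gamma$ of index $\mu=[\overline{\SL_2(\Om_K)}:\overline\Gamma]$ whose cusp resolution cycles have self-intersections $-b_{k,1},\dots,-b_{k,\ell_k}$ at the $k$-th cusp,
\[
e(Y_\Gamma)=2\mu\,\zeta_K(-1)+\sum_k\ell_k,\qquad c_1^2(Y_\Gamma)=4\mu\,\zeta_K(-1)+\sum_k\sum_{i=1}^{\ell_k}(2-b_{k,i}),
\]
hence $\chi(\Om_{Y_\Gamma})=\tfrac1{12}(c_1^2+e)=\tfrac12\mu\,\zeta_K(-1)+\tfrac1{12}\sum_k(3\ell_k-\sum_i b_{k,i})$. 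By Remark~\ref{rem:ellipticpoints}, $\Gamma(\id{c},\id{a})$ is torsion-free unless $\id{c}^2=(2)$ or $\id{c}^2=(3)$; in those cases $\id{c}$ has norm $2$ or $3$ and one must add the usual elliptic-point corrections to $e$, $c_1^2$ and $\chi$, but these corrections are bounded independently of $D$, so they affect the conclusion only for finitely many small discriminants, which are among the listed exceptions. The index is the multiplicative function $\mu=\normid{c}^3\prod_{\id{p}\mid\id{c}}(1-\normid{p}^{-2})$, $\zeta_K(-1)$ is a positive rational number, and the cusp data $(\ell_k,b_{k,i})$ are produced by the periodic continued-fraction algorithm of Appendix A (and depend on the genus of $\id{a}$).

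The main term $\tfrac12\mu\,\zeta_K(-1)$ is positive and grows like $\normid{c}^3$ for fixed $D$ and like $D^{3/2}$ up to logarithmic factors for $\id{c}=\Om_K$, whereas the cusp correction $\tfrac1{12}\sum_k(3\ell_k-\sum_i b_{k,i})$ and the defect $\sum_k\sum_i(b_{k,i}-2)$ entering $c_1^2$ are bounded polynomially in $D$ and $\normid{c}$ with a strictly smaller growth rate. Hence both displayed inequalities hold for all but finitely many pairs $(D,\id{c})$, reducing the theorem to a finite computation; carrying it out with the algorithm of Appendix A produces exactly the table. For the surfaces in, or just outside, the table one then uses the \emph{exact} values of $\chi$ and $c_1^2$ — and, when $K^2$ of the minimal model vanishes, a direct determination of that minimal model — to confirm that they are rational, Enriques, (blown-up) K3, or properly elliptic, and in particular not of general type. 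The statement for $\Gamma(\Om_K,\id{a})$ with $D>500$ is van der Geer's classical computation (\cite{VDG}, Chapter VII) and also drops out of the estimate above once $\zeta_K(-1)$ is large enough.

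The main obstacle is making the asymptotic domination effective: one needs explicit enough upper bounds on the lengths $\ell_k$ of, and on the sums $\sum_i b_{k,i}$ in, the cusp resolution cycles (and on the elliptic-point corrections in the ramified-level cases $\id{c}^2=(2),(3)$) to cut the problem down to a genuinely small finite list, and then one must resolve the borderline cases — exactly those where $\chi\in\{1,2\}$ or where the minimal model has $K^2=0$ — which is where the exceptional entries of the table arise and where a birational-geometry argument, rather than a purely numerical one, is required.
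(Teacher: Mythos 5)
The paper does not actually prove this statement: its ``proof'' is a one-line citation to Theorem VII.3.4 of \cite{VDG}. Your sketch correctly reconstructs the strategy behind that cited result. The ingredients you list are all right: $q(Y_\Gamma)=0$, the Enriques--Kodaira reduction (with $q=0$, the inequalities $\chi(\Om_{Y})\ge 2$ and $K_Y^2>0$ on \emph{any} smooth model do force general type, since blowing down only increases $K^2$ and the remaining classes all have $\chi=1$ or minimal $K^2=0$), and Hirzebruch's formulas for $e$, $c_1^2$ and $\chi$ --- the same formulas the paper itself invokes later, in Section 4, to compute $K_{Z_n}\cdot K_{Z_n}$. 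Your observation that the cusp defect of $\Gamma(\id{c},\id{a})$ scales like $\mu/\normid{c}$ times the level-one defect while the volume term scales like $\mu$ is also the correct mechanism, and matches the paper's equation relating $\sum(2-b'_{i',j})$ to $\sum(2-b_{i,j})$.

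The genuine gap is that the assertion being proved \emph{is} the exceptional table, and that is precisely the part your argument defers. Concretely, two things are missing. First, the step ``the cusp correction is bounded polynomially in $D$ and $\normid{c}$ with strictly smaller growth rate'' needs an explicit, effective bound on $\sum_k \ell_k$ and $\sum_k\sum_i(b_{k,i}-2)$ in terms of $D$ (these are controlled by class numbers and lengths of continued-fraction cycles of real quadratic orders); without it, ``all but finitely many pairs'' cannot be converted into the specific list, nor into the threshold $D>500$. Second, and more seriously, for each entry of the table one must prove the surface is \emph{not} of general type, which cannot be read off from $\chi$ and $c_1^2$ alone: it requires identifying the minimal model as rational, K3, Enriques or properly elliptic (e.g.\ exhibiting explicit rational or elliptic fibrations), which is the bulk of the work in \cite{VDG}, Chapter VII, and in the Hirzebruch--Van der Geer--Zagier papers. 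Two minor inaccuracies to fix if you carry this out: your $\mu$ is the order of $\SL_2(\Om_K/\id{c})$, whereas the relevant index is that of the image in $\PGL_2^+(K)$ (factors of $2$ and of unit indices intervene, and they matter in exactly the borderline cases); and the elliptic-point corrections occur when $\id{c}^2=(2)$ or $(3)$ (Remark~\ref{rem:ellipticpoints}), which does confine them to small ramified discriminants as you say, but those discriminants are where the table lives, so they cannot be waved away.
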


\begin{proof}
This is just part of Theorem 3.4 of \cite{VDG}, p. 167, where a general
classifications is given.
\end{proof}

Recall the following definition.

\begin{defi}
A smooth surface $S$ is called a \emph{minimal surface} if for any smooth
surface $S'$, any morphism  $S\to S'$ that is birational is an
isomorphism.
\end{defi}

From Castelnuovo's contractibility theorem, a minimal model of a
smooth surface can be obtained by contracting exceptional curves,
i.e. rational curves with self intersection number $-1$. We have the
following result.

\begin{prop}
Assume that $Y_{\Gamma(\id{c},\id{a})}$ is of general type and that
$Y_{\Gamma (\Om_{K},\id{a})}$ is not rational. If
$\normid{c} \ge C$, with 
\[
C = 3\left(\sum_{i=1}^h \sum_j (b_{i,j}-2)\right),
\]
where the first sum is over ideal class representatives $[\id{b}_{i}]$  of $\Om_{K}$ and
the $b_{i,j}$ are the self-intersection numbers of the components of
the cusp desingularization at $\id{b}_{i}$ (see Appendix A), then
$Y_{\Gamma(\id{c},\id{a})}$ is minimal.
\label{prop:Cconstant}
\end{prop}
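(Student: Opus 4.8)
The plan is to show that $Y_{\Gamma(\id{c},\id{a})}$ has no exceptional curves, i.e. no smooth rational curves of self-intersection $-1$, under the stated hypotheses. Since $Y_{\Gamma(\id{c},\id{a})}$ is of general type, it has a minimal model, and by Castelnuovo's contractibility theorem it will be minimal as soon as no $(-1)$-curve exists. So the whole argument reduces to ruling out $(-1)$-curves.

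First I would recall the structure of $Y_{\Gamma(\id{c},\id{a})}$: since $\id{c} \neq \Om_K$ (and one may reduce to the case $\id{c}^2 \neq (2),(3)$, the excluded small cases being handled by the explicit table or absorbed into the constant), Remark~\ref{rem:ellipticpoints} tells us $Y_{\Gamma(\id{c},\id{a})} = Z_{\Gamma(\id{c},\id{a})}$, so the only curves coming from the compactification/resolution data are the components $S_{i,j}$ of the cusp resolution cycles, which are rational curves with self-intersections $-b_{i,j}$ (with $b_{i,j} \geq 2$). The key input is a covering trick in the spirit of Section~1 and the map $\pi$ of Remark~\ref{rem:ramification}: any hypothetical exceptional curve $E \subset Y_{\Gamma(\id{c},\id{a})}$ would have to be analyzed via its image and via how curves in a surface of general type can meet the canonical divisor. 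Concretely, for a surface of general type the canonical class $K_Y$ is big and nef on the minimal model, and a $(-1)$-curve $E$ satisfies $K_Y \cdot E = -1 < 0$; one then locates $E$ by intersecting with the explicit description of $K_Y$ in terms of the cusp cycles (Vignéras/van der Geer's formula for the canonical divisor of a Hilbert modular surface), forcing $E$ to be one of the resolution components $S_{i,j}$ — but those have self-intersection $\leq -2$, a contradiction, \emph{unless} the adjunction computation is thrown off by the degree/ramification bookkeeping, which is exactly where $\normid{c}$ enters.

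The role of the constant $C = 3\left(\sum_{i=1}^h \sum_j (b_{i,j}-2)\right)$ is the heart of the matter: one compares $Y_{\Gamma(\id{c},\id{a})}$ with $Y_{\Gamma(\Om_K,\id{a})}$ (or an intermediate level) through the finite map induced by $\pi$, of degree roughly a power related to $\normid{c}$, ramified along the cusp resolution components with the ramification degrees recorded in Remark~\ref{rem:ramification}. Since $Y_{\Gamma(\Om_K,\id{a})}$ is assumed not rational, it is not too far from being minimal itself, and the number $\sum_{i,j}(b_{i,j}-2)$ controls the ``defect'' of the cusp cycles from being chains of $(-2)$-curves; the factor $3$ together with the hypothesis $\normid{c} \geq C$ guarantees that after pulling back, no component of any cusp cycle can drop to self-intersection $-1$, and that the canonical divisor stays nef. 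I would make this precise by writing $K_{Y_{\Gamma(\id{c},\id{a})}} = \pi^* K_{Y_{\Gamma(\Om_K,\id{a})}} + R$ with $R$ the ramification divisor supported on the cusp resolutions, computing $K \cdot S_{i,j}$ for each resolution component via adjunction, and checking this intersection number is $\geq 0$ precisely when $\normid{c} \geq C$; combined with bigness of $K$ this yields nefness of $K$, hence no $(-1)$-curves.

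The main obstacle I expect is the precise bookkeeping in the comparison map: the map $\pi$ in Remark~\ref{rem:ramification} is stated for $\Gamma((n),\id{a})$ with $n$ a rational integer, whereas here $\id{c}$ is an arbitrary integral ideal, so one must either reduce to the principal case $\id{c} \supseteq (n)$ with $n = $ (say) the smallest integer in $\id{c}$ — losing a factor bounded by $\normid{c}$, which the constant $3$ is presumably chosen to absorb — or redo the local toric computation of Section~\ref{sec:cusp-resolution} directly for general $\id{c}$. Getting the inequality $\normid{c} \geq C \Rightarrow K \cdot S_{i,j} \geq 0$ to come out with exactly the stated constant, rather than something merely of the same order, is the delicate step; everything else (Castelnuovo, adjunction on rational curves, the known formula for the canonical divisor, bigness from general type) is standard and can be quoted.
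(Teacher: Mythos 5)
The paper does not actually prove this proposition: it is quoted verbatim from van der Geer's book (Theorem 7.19, first case), so your attempt at a self-contained argument has to be judged on its own terms, and as it stands it has a fatal gap at the step you yourself call ``the heart of the matter.'' You propose to establish nefness of $K$ by computing $K\cdot S_{i,j}$ for the cusp-resolution components and checking that it is $\ge 0$ ``precisely when $\normid{c}\ge C$.'' But each $S_{i,j}$ is a $\PP^1$ with $S_{i,j}^2=-b_{i,j}$ and $b_{i,j}\ge 2$, so adjunction gives $K\cdot S_{i,j}=b_{i,j}-2\ge 0$ unconditionally, for every level: the hypothesis $\normid{c}\ge C$ never enters, and this computation carries no information. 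Worse, positivity of $K$ on the cusp cycles cannot yield nefness of $K$: a $(-1)$-curve is never a component of a cusp cycle (those have self-intersection $\le -2$), and for any hypothetical exceptional curve $E$ one has $K\cdot E=-1<0$ by adjunction regardless of the level, so the curves you must rule out are exactly the ones your computation does not see (``big'' does not imply ``nef''). Finally, the step where you ``locate'' $E$ by expressing $K$ in terms of the cusp cycles presupposes Conjecture~\ref{conj:canonicaldivisor}; the proposition is precisely the unconditional statement, and even granting the conjecture the dangerous curves are components of the Hirzebruch--Zagier divisors $F_N$, not the $S_{i,j}$ --- that is the situation handled separately in Theorem~\ref{thm:bestbound}.

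The idea that is missing is the one used in van der Geer's proof and, in the conditional setting, in the paper's proof of Theorem~\ref{thm:bestbound}: take a hypothetical exceptional curve $E$ on $Y_{\Gamma(\id{c},\id{a})}$, push it down to a curve $E'$ on $Y_{\Gamma(\Om_K,\id{a})}$, which is again exceptional by \cite[Prop.~7.18]{VDG} (this is where non-rationality of $Y_{\Gamma(\Om_K,\id{a})}$ is used), and then bound $E\cdot E$ from above by $d\,(E'\cdot E')$, where $d$ is the degree of $E\to E'$ and one exploits that $\pi$ is unramified along $E'$. The hypothesis $\normid{c}\ge C=3\sum_{i,j}(b_{i,j}-2)$ serves to force $d$ (equivalently, the intersection of $E$ with the cusp resolution upstairs, controlled via $\pi^*S = nS'$ and the quantity $K\cdot S=\sum(b_{i,j}-2)$ downstairs) to be large enough that $E\cdot E<-1$, a contradiction. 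Your write-up gestures at this covering trick in the second paragraph but then abandons it for the nefness-on-cusp-components computation; to repair the proof you would need to carry the covering argument through, which is essentially reproving \cite[Thm.~7.19]{VDG}.
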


\begin{proof}
The statement corresponds to the first case of Theorem 7.19 of \cite{VDG}, p. 184.
\end{proof}

\begin{remark}
This gives an effective bound for the level $\id{c}$ needed for the
Hecke/Sturm bounds (of sections \ref{sec-Hecke} and
\ref{sec-Sturm}). 
It will become clear that the smaller $\normid{c}$
we take, the better the bound gets, so we will say a few more words on
how to improve this norm.
\end{remark}

Recall the definition of the Hirzebruch-Zagier
cycles (which correspond to the modular curves inside the Hilbert
modular surfaces). A matrix $B$ in $M_2(K)$ is called skew-hermitian
if $B^t = -B'$, where the superscript $t$ means the transpose. Let
$\id{a} \in \Om_K$ be an ideal of norm $A$. A skew-hermitian form $B$
is called integral with respect to $\id{a}$ if it is of the form 
\[
B = \left( \begin{array}{cc}
a \sqrt{D} & \lambda \\
-\lambda' & \frac{b}{A}\sqrt{D} \end{array} \right),
\]
with $a,b \in \ZZ$ and $\lambda \in \id{a}^{-1}$. The integral form
$B$ is called primitive if it is not divisible by a natural number
greater than $1$, i.e. if $B$ is not of the form $m \tilde{B}$, with
$\tilde{B}$ integral with respect to $\id{a}$ and $m >1$. If we denote
by $\bfC(N)$ the set of skew-hermitian, integral with respect to
$\id{a}$, primitive matrices of determinant $N/A$, then the cycle $F_N$
is defined by
\begin{equation}
F_N = \bigcup_{B \in \bfC(N)} \left\{(z_1,z_2) \in \Siegel^2 \cup \PP^1(K): (z_2 \quad 1)B\left(\begin{array}{c} z_1\\ 1\end{array} \right)=0 \right\}.
\end{equation}

Abusing the notation, we will also denote by $F_{N}$ the divisor on any
modular surface obtained as the closure of the image of $F_{N}$. By
the context it will be clear in which surface we are considering them.

The following conjecture is stated as Conjecture (7.13) in \cite{VDG}.

\begin{conjecture}
If $Y_{\Gamma(\Om_K,\id{a})}$ is not rational, then the canonical divisor
can be written as a rational
positive linear combination of resolutions curves and the divisors
$F_N$.
\label{conj:canonicaldivisor}
\end{conjecture}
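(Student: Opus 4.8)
\begin{remark}
We do not know how to prove Conjecture~\ref{conj:canonicaldivisor} in general; here we only sketch a possible line of attack. The natural starting point would be the Hirzebruch--Zagier theorem, which tells us that $\mathrm{NS}(Y_{\Gamma(\Om_K,\id{a})})\otimes\Q$ is spanned by the classes of the rational curves appearing in the resolution of the cusps and of the quotient singularities, together with the classes of the divisors $F_N$, $N\ge 1$ (see \cite{VDG}). Hence $K_{Y}$ is automatically \emph{some} rational linear combination of these classes, and the entire content of the conjecture is the positivity of the coefficients when $Y$ is not rational. Equivalently, one would like to show that the pseudoeffective cone of $Y$ is generated by the resolution curves and the $F_{N}$, and then observe that $K_{Y}$ lies in this cone.

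To produce explicit coefficients I would combine three ingredients. First, the canonical divisor formula: on the Baily--Borel compactification $K_{X}$ is linearly equivalent to $\mathcal M_{2}-\sum(\text{cusps})$, where $\mathcal M_{2}$ is the line bundle of Hilbert modular forms of weight $2$, so that on $Y$ the class $K_{Y}$ equals the pull-back of $\mathcal M_{2}$ minus an explicit effective combination of the cusp resolution curves and of the curves resolving the elliptic points (see \cite{VDG}, Chapters~II--III). Second, Hirzebruch's computation of the intersection numbers $F_{N}\cdot F_{M}$, $F_{N}\cdot S_{\infty,j}$, and of the intersections of $F_{N}$ with the elliptic resolution curves, obtained by restricting $F_{N}$ to the modular curves it parametrises and analysing its behaviour along the boundary; these relations let one solve for candidate coefficients. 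Third, since $Y$ is not rational it follows from van der Geer's classification that $K_{Y}$ is $\Q$-effective and hence pseudoeffective, so it admits a Zariski decomposition into a nef part plus an effective ``negative'' part with negative-definite support, orthogonal to the nef part; the negative part is then forced to be supported on resolution curves, and it would only remain to show that the nef part lies in the closed convex cone generated by the $F_{N}$ and the remaining resolution curves.

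The hard part will be exactly this last step, and it is where the conjecture is genuinely open. The divisors $F_{N}$ are highly reducible, and several of their irreducible components either coincide with resolution curves or meet them, so exhibiting a true effective representative of $K_{Y}$ with \emph{positive} coefficients on the $F_{N}$ requires an \emph{effective} form of the Hirzebruch--Zagier generation statement: a finite explicit family $F_{N_{1}},\dots,F_{N_{r}}$ whose classes, together with the resolution curves, span $\mathrm{NS}(Y)_{\Q}$ and express $K_{Y}$ with non-negative coordinates. In any concrete case (e.g.\ for small discriminant $D$) such a family can be found by a finite search using the intersection matrix, but we know of no uniform construction, which is why the statement remains a conjecture.
\end{remark}
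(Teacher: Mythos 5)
This statement is not something the paper proves: it is van der Geer's Conjecture (7.13), reproduced verbatim as Conjecture~\ref{conj:canonicaldivisor}, and the authors only record in Remark~\ref{remark:conjecture} the cases in which it is known (when the surface is not of general type, and, by work of Hermann and Freitag, for most discriminants depending on $D \bmod 8$). You correctly identify it as open and do not claim a proof, so there is nothing to fault; your sketch via Hirzebruch--Zagier generation of $\mathrm{NS}(Y)\otimes\Q$ together with a Zariski decomposition of $K_{Y}$ is a reasonable heuristic and is consistent with how the known cases are actually established, but, as you yourself note, the positivity of the coefficients is exactly the open content and your outline does not close that gap.
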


\begin{remark} \label{remark:conjecture}
When $\id{a}$ is in the genus of $\Om_K$ or
  $(\sqrt{D})$, 
this conjecture is known in the following cases
\begin{enumerate}
\item When $Y_{\Gamma(\Om_K,\id{a})}$ is not of general type.
\item \cite{Hermann,Hermann2} When $D\equiv 1 \pmod{8}$ and either
  \begin{enumerate}
  \item there is a divisor $a$ of $D$ with $a\not\equiv 1 \pmod{8}$;
  \item there are two integers $n,m>0$ with $m\equiv 7 \pmod{8}$ and
    $D=(m^{2}-8)/n^{2}$. 
  \end{enumerate}
\item \cite{Freitag} When $D\not \equiv 1 \pmod{8}$.
\end{enumerate}
\end{remark}

Assume now that $Y_{\Gamma (\id{c},\id{a})}$ is of general type and
$Y_{\Gamma (\Om_K,\id{a})}$ is not rational. We want to improve our
criterium for minimality of $Y_{\Gamma (\id{c},\id{a})}$ assuming that
Conjecture \ref{conj:canonicaldivisor} is true for $Y_{\Gamma
  (\Om_K,\id{a})}$. By Proposition 7.18 of \cite{VDG} (p. 183), if $E$ is an
exceptional curve in $Y_{\Gamma (\id{c},\id{a})}$, then its image in
$Y_{\Gamma (\Om_K,\id{a})}$ is also exceptional. If Conjecture
\ref{conj:canonicaldivisor} is true for $Y_{\Gamma (\id{c},\id{a})}$,
the exceptional curves in this surface are components of the divisors
$F_{N}$. Therefore any exceptional curve in $Y_{\Gamma
  (\id{c},\id{a})}$ is also a component of a divisor $F_N$.
If for example $6 \mid \id{c}$, then the components of the
curves $F_{N}$ have genus greater than $1$ (see for example
\cite{Shimura}, formula (1.6.4), page 23) and are therefore not
exceptional, hence the surface
$Y_{\Gamma(\id{c},\id{a})}$ is minimal for this level. Actually we can
do a little better.

\begin{thm}
Assume that $Y_{\Gamma(\Om_K,\id{a})}$ is not rational and
Conjecture~\ref{conj:canonicaldivisor} is true for
this surface. If
$n \ge 3$ is an integer and $Y_{\Gamma ((n),\id{a})}$ is of general type,
then $Y_{\Gamma((n),\id{a})}$ is minimal.
\label{thm:bestbound}
\end{thm}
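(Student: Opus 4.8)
The plan is to show that $Y_{\Gamma((n),\id{a})}$ has no exceptional curves by combining the discussion immediately preceding the statement with the ramification structure of the covering map $\pi$. Suppose for contradiction that $E \subset Y_{\Gamma((n),\id{a})}$ is an exceptional curve, i.e. a smooth rational curve with $E^2 = -1$. By Proposition 7.18 of \cite{VDG}, its image $E'$ under $\pi$ in $Y_{\Gamma(\Om_K,\id{a})}$ is also exceptional. Since we assume Conjecture~\ref{conj:canonicaldivisor} holds for $Y_{\Gamma(\Om_K,\id{a})}$, and exceptional curves must appear with negative coefficient in any effective-like decomposition of the canonical class in terms of nef-ish pieces, $E'$ cannot be a component of any $F_N$ (those contribute positively) nor a cusp resolution curve — actually the cleanest route is: the canonical divisor is a positive rational combination of cusp resolution curves and the $F_N$, so an exceptional curve $E'$, having $K\cdot E' = -1 < 0$, must be one of the curves appearing in that combination with which the intersection is negative; since the $F_N$ are irreducible-component-wise curves and cusp resolution components are the $S_{\bullet, j}$, one deduces $E'$ is either a cusp resolution component or a component of some $F_N$.

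The next step is to rule out each possibility using that $n \ge 3$. First, the cusp resolution components: since $n \ge 3$, Remark~\ref{rem:ellipticpoints} does not directly apply (that was about $\id{c}$ with $\id{c}^2 \ne (2),(3)$), but the relevant fact is that for $Y_{\Gamma((n),\id{a})}$ with $n\ge 3$ the cusp resolution cycles computed via the continued-fraction algorithm consist of curves all of self-intersection $\le -2$ (one checks the vertices $A_j$ for the lattice $\id{a}^{-1}(n)$ are never identified in a way producing a $(-1)$-curve, equivalently the cycle is not the degenerate one), so no component is exceptional. Second, the components of $F_N$: by Remark~\ref{rem:ramification}, $\pi$ restricted to a component $E$ mapping to $E'$ has degree $n$ and $\pi$ is ramified along the cusp resolution divisors; combined with the fact that the $F_N$ on $Y_{\Gamma((n),\id{a})}$ are (images of) Hirzebruch–Zagier cycles which, because $n \ge 3$ forces the level $(n)$ to be divisible by enough, have genus $\ge 1$ by Shimura's genus formula (\cite{Shimura}, (1.6.4)) — hence no component of $F_N$ is rational, so none is exceptional. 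Either way we get a contradiction, and therefore $Y_{\Gamma((n),\id{a})}$ has no exceptional curves, i.e. it is minimal.

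The main obstacle I anticipate is the genus computation for the $F_N$ components at level $(n)$ with $n\ge 3$: one must verify that Shimura's formula, or the relevant modular curve genus bound, genuinely forces genus $\ge 1$ for \emph{every} component of \emph{every} $F_N$ that could occur, for all $n\ge 3$ and all $\id{a}$, rather than just for $n$ divisible by $6$ as in the paragraph before the theorem. This likely requires either a direct genus estimate for $\Gamma_0$-type or $\Gamma(n)$-type quotient curves with $n \ge 3$, or an argument that the map $\pi$ together with the degree-$n$ ramification (Remark~\ref{rem:ramification}) forces $E'$ itself to have been exceptional in a way incompatible with the Conjecture unless $E'$ is a cusp component — and then handling cusp components separately via the continued-fraction algorithm of Appendix A. A secondary, more routine point is checking that no cusp resolution component of $Y_{\Gamma((n),\id{a})}$ is a $(-1)$-curve; this should follow from the standard fact that cyclic quotient singularity resolutions have all exceptional curves with self-intersection $\le -2$, applied to the cusp singularities here.
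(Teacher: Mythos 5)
Your reduction to the dichotomy is essentially the paper's: an exceptional curve $A$ on $Y_{\Gamma((n),\id{a})}$ has exceptional image in $Y_{\Gamma(\Om_K,\id{a})}$ (Proposition 7.18 of \cite{VDG}), and the Conjecture forces that image to be a component of some $F_N$ (in fact Theorem 7.11 of \cite{VDG} pins down $N\in\{1,2,3,4\}$, plus $9$ when $3\mid D$); ruling out cusp resolution components via $b_{i,j}\ge 2$ is also fine. But the step you yourself flag as the main obstacle is a genuine gap, and the genus route cannot close it: the components of $F_N$ at level $(n)$ are quotients of $\Siegel$ by groups of $\Gamma(n)$-type, and $X(3)$, $X(4)$, $X(5)$ all have genus $0$. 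So for $n=3,4,5$ --- precisely the cases that make this theorem an improvement over the ``$6\mid\id{c}$'' remark preceding it --- the components of $F_N$ can perfectly well be rational, and ``genus $\ge 1$, hence not exceptional'' fails. Your fallback suggestion (that $\pi$ plus Remark~\ref{rem:ramification} somehow contradicts the Conjecture directly) is not an argument.

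The paper closes this gap by a self-intersection computation instead of a genus argument. One passes to $Z_{\Gamma(\Om_K,\id{a})}$ (cusps resolved, elliptic points not), lets $C'$ be the image of $A$ there, and computes $C'\cdot C'=f^{*}(C')\cdot f^{*}(C')$ case by case for $N=1,2,3,4,9$ using the known configuration of elliptic points on the components of $F_N$: writing $f^{*}(C')=C+\tfrac12 E_2+\tfrac13 E_3+\cdots$ with $C$ the exceptional strict transform, one finds $C'\cdot C'$ equals $-\tfrac16$, $-\tfrac12$, $-\tfrac23$, $-\tfrac13$, or $-1$, so always $C'\cdot C'\le-\tfrac16$. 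Since $\pi$ is unramified over $C'$ and its preimage consists of $g/d$ translates of $A$ (where $d=[\PSL_2(\ZZ):\Gamma(n)]$ is the degree of $A\to C'$), one gets $A\cdot A\le d\,C'\cdot C'$; for $n\ge 3$ one has $d>6$, hence $A\cdot A<-1$, a contradiction. This quantitative use of the index $[\PSL_2(\ZZ):\Gamma(n)]$ against the fractional self-intersection $-\tfrac16$ is the idea your proposal is missing.
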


\begin{proof}  
Recall that $Z_{\Gamma (\Om_{K},\id{a})}$ is the resolution of the cusps of
$X_{\Gamma (\Om_{K},\id{a})}$ but without resolving the elliptic
points which is a $\Q$-variety. By Remark \ref{rem:ellipticpoints}, 
$Y_{\Gamma((n),\id{a})}$ agrees with $Z_{\Gamma((n),\id{a})}$ and
hence we get the following diagram
\begin{displaymath}
  \xymatrix{  &
    Y_{\Gamma ((n),\id{a})}\ar[d]^{\pi }\\
    Y_{\Gamma (\Om_K,\id{a})}\ar[r]^{f} & 
    Z_{\Gamma (\Om_K,\id{a})},
  }
\end{displaymath}
where $f$ is the resolution at the elliptic points.

We need to show that there are no exceptional curves on $Y_{\Gamma
  ((n),\id{a})}$. Assume that there is such an exceptional curve
$A$. Let $C'$ be its image in $Z_{\Gamma (\Om_K,\id{a})}$ and $C$ the
strict transform of $C'$ in $Y_{\Gamma (\Om_K,\id{a})}$. As we
mentioned previously, by Proposition 7.18 of \cite{VDG}, the curve
$C$ is exceptional. 
By Theorem 7.11 of \cite{VDG} (p. 181), $C$ (hence $A$) is a
component of a divisor 
$F_{N}$ for $N=1, 2, 3$ or $4$ (and $9$ if $3 \mid
D$).

We will show that $A\cdot A<-1$ contradicting the
assumption. To this end  
we start by computing the self-intersection of $C'$. 
We have the relation
\[
C' \cdot C' = f^*(C') \cdot f^*(C').
\]
Using the desingularization of the components of $F_i$ given in \cite{VDG} (page
169), we get the following cases:

\noindent $\bullet$ {\bf The case $i=1$:} The curve $C'$ goes through
an elliptic point of order $2$ and an elliptic point of order
$3$. While computing the desingularization at the order $2$ point, we
get a $\PP^1$ with self-intersection $-2$ and while computing the
desingularization of elliptic point of the order $3$ we get a $\PP^1$
with self-intersection $-3$ (see Figure (2) in \cite{VDG}, page
169). Let $E_2$ and $E_3$ be these two exceptional divisors. We can
write $f^*(C') = C+aE_2+bE_3.$ Since $f^*(C') \cdot E_2 = f^*(C') \cdot
E_3 = 0$, we get
  \[f^*(C') = C+ \frac{1}{2}E_2+\frac{1}{3}E_3.\]
  Therefore
\[
f^*(C') \cdot f^*(C') = C\cdot C + C\cdot E_2 +\frac{2}{3} C\cdot E_3 + \frac{1}{4}E_2\cdot E_2 + \frac{1}{9}E_3 \cdot E_3 = -\frac{1}{6}.
\]
\noindent $\bullet$ {\bf The case $i=2$:} The curve $C'$ goes through
an elliptic 
  point of order $2$. While computing the desingularization at the
  order $2$ point, we get a $\PP^1$ with self-intersection $-2$ (see
  Figure (3) in \cite{VDG}, page 169). Let $E_2$ be the exceptional
  divisor, so  $f^*(C') = C+aE_2$. Since $f^*(C') \cdot E_2 =
  0$, we get that $a= \frac{1}{2}$, and
\[
f^*(C') \cdot f^*(C') = C\cdot C + C\cdot E_2 + \frac{1}{4}E_2\cdot E_2 = -\frac{1}{2}.
\]
\noindent $\bullet$ {\bf The case $i=3$:} Since
$Y_{\Gamma(\Om_K,\id{a})}$ is not rational, $D \neq 12$. Then the curve
  $C'$ goes through an elliptic point of order $3$. While computing
  the desingularization at the order $3$ point, we get a $\PP^1$ with
  self-intersection $-3$. Let $E_3$ be the exceptional divisor, then
  $f^*(C') = C+bE_3$. Since $f^*(C') \cdot E_3 = 0$, we get that
  $b= \frac{1}{3}$, and
\[
f^*(C') \cdot f^*(C') = C\cdot C +\frac{2}{3} C\cdot E_3 + \frac{1}{9}E_3\cdot E_3 = -\frac{2}{3}.
\]
\noindent $\bullet$ {\bf The case $i=4$:} Since
$Y_{\Gamma(\Om_K,\id{a})}$ is not rational, $D \neq 8$. If $2 \mid D$,
  then the situation is the same as the case $i=2$. If $D \equiv 1
  \pmod 8$ then $C'$ does not go through any elliptic point, hence
  the self intersection is $-1$.  If $D \equiv 5 \pmod 8$ then the
  curve $C'$ goes through two elliptic points of order $3$. While
  computing the desingularization at the two order $3$ points, we get
  two copies of $\PP^1$ with self-intersection $-3$. Let $E_3$ and
  $E_3'$ be the exceptional divisors. Then $f^*(C') =
  C+\frac{1}{3}E_3 + \frac{1}{3}E_3'$, and
\[
f^*(C') \cdot f^*(C') = C\cdot C + \frac{2}{3}C\cdot (E_3+E_3') + \frac{1}{9}(E_3\cdot E_3+E_3'\cdot E_3') = -\frac{1}{3}.
\]
\noindent $\bullet$ {\bf The case $i=9$:} Again we use $D \neq 12$. If $3 \nmid D$,
  then the curve $C'$ does not go through any elliptic point. If $3
  \mid D$, and $D \neq 105$, then $C'$ goes through an elliptic point
  of order $3$, so the blow up gives a $\PP^1$ with self intersection
  number $-3$ (see the first Figure of \cite{VDG} page 170), so we are
  in the same situation as the case $i=3$.

If $D=105$, the picture is similar, but in this case some
components are not disjoint any more. Even though, the same computation applies.

\medskip

Let $g$ denote the
degree of $\pi$ and $d$ the degree of the morphism induced by
$\pi$ between the modular curve $A$ and its image $C'$. Since the
morphism $\pi$ is not ramified over $C'$, the preimage
of $C'$ consists on $c=g/d$ curves which are translates of $A$,
\begin{displaymath}
  \pi ^{\ast}(C')\cdot \pi ^{*}(C')=g C'\cdot C'\qquad\text{ and }\qquad\pi
  ^{\ast}(C')\cdot \pi ^{\ast}(C')\ge c A\cdot A.
\end{displaymath}
Therefore 
\[
 A\cdot A \le d C' \cdot C'.
\]
Note that $d = [PSL_2(\ZZ):\Gamma(n)]$, where $\Gamma(n)$ is the
classical congruence subgroup. Since $n\ge 3$, $d >6$ and $A \cdot
A<-1$. Thus $A$ is not exceptional. 
\end{proof}

\begin{remark}
  It is clear that if $Y_{\Gamma((n),\id{a})}$ is a minimal surface of
    general type and $m$ is a positive integer, then
    $Y_{\Gamma((mn),\id{a})}$ is also a minimal surface of general
      type.
\end{remark}

\begin{summary}\label{summ:1}
In this section we have obtained the following results:
\begin{itemize}
\item If $D=5, 8, 12, 13, 17, 21, 24, 28, 33, 60$ and $\id{a}$ is in
  the principal genus or $D=12$ and $\id{a}$ is not in the principal
  genus, then $Y_{\Gamma(\Om_K,\id{a})}$ is rational so the
  previous results do not apply. This case will be treated separately
  in Appendix~\ref{appendix:b}.
\item If $Y_{\Gamma(\Om_K,\id{a})}$ is not rational and 
\[
n\ge\sqrt{3\left(\sum_{i=1}^h \sum_j (b_{i,j}-2)\right)},
\]
then $Y_{\Gamma((n),\id{a})}$ is a minimal surface of general type.
\item If $Y_{\Gamma(\Om_K,\id{a})}$ is not rational and satisfies
  Conjecture~\ref{conj:canonicaldivisor} (see
  Remark~\ref{remark:conjecture}) then $Y_{\Gamma((n),\id{a})}$ is a
  minimal surface of general type for $n\ge 3$.
\end{itemize}
\end{summary}

\section{Hilbert modular forms}

In this section we recall the definition and basic properties of
Hilbert modular forms.

\begin{defi}
  Let $\Gamma_{\id{a}}$ be a congruence subgroup, and $k_{1}$ and
  $k_{2}$ be integers such that $k_1 \equiv k_2 \pmod{2}$. A
  holomorphic function $G\colon \Siegel^{2}\to \CC$ is called a
  \emph{Hilbert modular form} of weight $\bfk=(k_{1},k_{2})$ for the
  group $\Gamma_{\id{a}}$ if for all $\gamma =
  \begin{pmatrix}
    a & b\\ c&d
  \end{pmatrix}\in \Gamma_{\id{a}}$
  one has, for each $\bfz=(z_{1},z_{2})\in \Siegel^{2}$,
  \begin{equation}
    \label{eq:2}
    G(\gamma \bfz)=(cz_{1}+d)^{k_{1}}(c'z_{2}+d')^{k_{2}}G(\bfz).
  \end{equation}
  If $k$ is an integer and 
  $G$ is a modular form of weight $\bfk=(2k,2k)$, we will call it a
  modular form of
  parallel weight $2k$. We will denote by $M_{\bfk}(\Gamma_{\id{a}} )$ the space
  of all modular forms of weight $\bfk$ and by $M_{2k}(\Gamma_{\id{a}} )$ the
  space of all modular forms of parallel weight $2k$.
\end{defi}

Let $G$ be a Hilbert modular form of weight $(k_{1},k_{2})$. It admits
a Fourier expansion in each cusp. Since all the cusps are conjugate to
the infinity cusp (possibly altering the ideals) by an element of
$\PSL_2(K)$, we will just recall the case of the infinity cusp. Since
$\Gamma_{\id{a}}$ is a congruence group, the isotropy group of the
cusp $(1:0)$ contains some $G(M,V)$ (since for example for
$\Gamma(\id{c},\id{a})$ it equals
$G(\id{a}^{-1}\id{c},U_{\Om_{K},\id{c}}^{2})$). The modularity
condition implies that, if $m\in M$ and $\epsilon \in
U_{\Om_{K},\id{c}}$ then
\begin{align}
  G(z_{1}+m,z_{2}+m')&=G(z_{1},z_{2}),\label{eq:3}\\
  G(\epsilon ^{2}z_{1},{\epsilon '}^{2}z_{2})&=\epsilon
  ^{-k_{1}}{\epsilon'} ^{-k_{2}}G(z_{1},z_{2}).\label{eq:4} 
\end{align}
The periodicity condition \eqref{eq:3} implies that $G$ admits the
Fourier expansion
\[
G= \sum_{\substack{\xi \in M^{\vee}}} a_\xi \exp(2\pi i(\xi
z_1 + \xi' z_2)), 
\]
where $M^{\vee}$ is the set of $\xi\in K$ such that $\Tr(m\xi)\in \ZZ$
for all $m\in M$. Let $M^{\vee}_{+}$ denote the set of totally
positive elements of $M^{\vee}$. Then the holomorphicity of $G$
implies that the only non-zero coefficients $a_\xi$ of the above
expansion are $a_0$ and $a_\xi$ with $\xi\in M^{\vee}_{+}$. Hence
\[
G= \sum_{\xi \in M^{\vee}_{+}\cup\{0\}} a_\xi \exp(2\pi i(\xi
z_1 + \xi' z_2)).
\]

The modularity equation
\eqref{eq:4} implies that the
coefficients of the Fourier expansion satisfy the condition
\begin{equation}
  \label{eq:12}
 a_{\xi\epsilon ^{2}}=\epsilon ^{k_{1}}\epsilon
 '{}^{k_{2}}a_{\xi}\quad\text{ for all }\quad\epsilon \in
 U_{\Om_K,\id{c}}. 
\end{equation}
In particular, if $G$ is of parallel weigh $2k$ then $a_{\xi\epsilon
  ^{2}}=a_{\xi}$. 

By means of the Fourier expansion, we see that every modular form
determines a holomorphic function in an analytic neighborhood of each
cusp. 

\begin{defi}
  \begin{enumerate}
  \item A Hilbert modular form $G$ is called a cusp form if, for each
    cusp, the  coefficient $a_{0}$ of the Fourier expansion of $G$ is
    zero. We denote by $S_{\bfk}(\Gamma ) \subset M_{\bfk}(\Gamma ) $
    the space of modular cusp forms of weight $\bfk$ and by
    $S_{2k}(\Gamma ) \subset M_{2k}(\Gamma ) $ 
    the space of modular cusp forms of parallel weight $2k$.
  \item Let $G$ be a Hilbert modular form of parallel weight $2k$ for
    the group $\Gamma (\id{c},\id{a})$ and $c_{i}$ a cusp of $X_{\Gamma
  (\id{c},\id{a})}$. Let $S_{i}$ be the resolution divisor of
$c_{i}$ in $Y_{\Gamma (\id{c},\id{a})}$. The modular form $G$
determines a holomorphic function $f$ in an analytic neighborhood
$U_{i}$ of
$S_{i}$. We say that $G$
vanishes with order $a$ at the cusp $c_i$ if, the divisor
$\Div(f) -aS_i$ 
is effective in $U_{i}$. We will write $\ord_{c_{i}}G=a$ if $G$
vanishes at the cusp $c_{i}$ with order $a$ but does not vanish with
order $a+1$.  
  \end{enumerate}
\end{defi}

The vanishing of a Hilbert modular form at a cusp can be read
from the Fourier expansion. For simplicity we will treat only the case
of the infinity cusp. Let
  $\{A_{j}\}_{j\in J}$ be a set of representatives under the action of
  $V$, of the corners of the
  convex hull of 
  $M_{+}$ (see Section \ref{sec:cusp-resolution}).
\begin{lemma}\label{lemm:1}
  Let $G$ be a modular form of parallel weight $2k$  for a congruence
  subgroup 
$\Gamma _{\id{a}}$ and
\begin{displaymath}
  G= \sum_{\xi \in M^{\vee}_{+}\cup\{0\}} a_\xi \exp(2\pi i(\xi
z_1 + \xi' z_2)), 
\end{displaymath}
its Fourier expansion at the infinity cusp. Then
\begin{displaymath}
  \ord_{c_{1}}G=\inf\{\Tr(\xi A_{j})\mid j\in J, a_{\xi}\not =0\}.
\end{displaymath}
Thus, $G$  vanishes with
  order $a$ at the infinity cusp if and only if  $a_{\xi}=0$ for all
  $\xi\in M^{\vee}_{+}\cup\{0\}$ such that there is a $j\in J$ with
  $\Tr(\xi A_{j})< a$. 
\end{lemma}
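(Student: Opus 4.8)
The plan is to translate the analytic statement into the language of divisors on the local toroidal model $Y^{+}$ of the infinity cusp, matching the geometry of Section~\ref{sec:cusp-resolution}. Recall that near the infinity cusp the surface $Z_{\Gamma_{\id a}}$ (or $Y_{\Gamma_{\id a}}$) looks like the quotient by $V$ of the toroidal compactification $Y^{+}$, where to each cone $\sigma_j$ spanned by consecutive corners $A_{j-1},A_j$ of the convex hull of $M_+$ one glues a copy of $\CC^2$ with coordinates $(u_j,v_j)$, and the curve $S'_{\infty,j}$ corresponds to the common axis of $\sigma_{j}$ and $\sigma_{j+1}$, i.e.\ to the ray through $A_j$. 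So the first step is: on the chart attached to the cone $\sigma_j$, the Fourier term $\exp(2\pi i(\xi z_1+\xi'z_2))$ with $\xi\in M^{\vee}$ becomes, via the change of variables $\exp(2\pi i z_1)=u^{\mu_1}v^{\mu_2}$, $\exp(2\pi i z_2)=u^{\mu_1'}v^{\mu_2'}$ (where now $\{\mu_1,\mu_2\}=\{A_{j-1},A_j\}$ is the $\ZZ$-basis of $M$ attached to the cone, cf.\ Lemma~2.1 of \cite{VDG}), a monomial $u^{\Tr(\xi A_{j-1})}v^{\Tr(\xi A_j)}$. This is the key computation: pairing $\xi$ against the basis elements of $M$ reads off the exponents, and since $A_{j-1},A_j$ generate $\sigma_j$, totally positive $\xi$ (those appearing in the expansion) give non-negative exponents, so $G$ is indeed holomorphic on this chart, and the vanishing order of $G$ along the axis $\{v_j=0\}=S'_{\infty,j}$ (respectively $\{u_j=0\}=S'_{\infty,j-1}$) is exactly $\inf\{\Tr(\xi A_j)\mid a_\xi\neq 0\}$.

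Next I would assemble these local orders into the order at the cusp. By definition, $\ord_{c_1}G=a$ means $\Div(f)-aS_1$ is effective in a neighbourhood $U_1$ of the whole resolution divisor $S_1=\sum_{j\in J} S_{\infty,j}$, where $J$ indexes the $V$-orbits of corners. Since $S_1$ is a cycle of rational curves whose components are the images of the $S'_{\infty,j}$, effectivity of $\Div(f)-aS_1$ amounts to: the order of vanishing of $f$ along each component $S_{\infty,j}$ is at least $a$. Combining with the previous step and accounting for the $V$-action (which identifies $S'_{\infty,j}$ with $S'_{\infty,\epsilon j}$ and, by \eqref{eq:12}, relates the coefficients $a_\xi$ and $a_{\xi\epsilon^2}$ compatibly, so the infimum over a $V$-orbit of corners equals the infimum over a single representative), one gets
\begin{displaymath}
  \ord_{c_1}G=\min_{j\in J}\ \inf\{\Tr(\xi A_j)\mid \xi\in M^{\vee}_+\cup\{0\},\ a_\xi\neq 0\}
  =\inf\{\Tr(\xi A_j)\mid j\in J,\ a_\xi\neq 0\},
\end{displaymath}
which is the displayed formula; the final ``if and only if'' is then the immediate reformulation (the order is $\ge a$ iff no nonzero $a_\xi$ has $\Tr(\xi A_j)<a$ for some $j$). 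One subtlety to handle: the constant term $a_0$ contributes $\Tr(0\cdot A_j)=0$, so if $a_0\neq 0$ the order is $0$, consistent with the convention and with $0\in M^{\vee}_+\cup\{0\}$.

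The main obstacle I expect is the bookkeeping around the $V$-action and the gluing: one must check that the vanishing order of the global section $f$ along a component $S_{\infty,j}$ of the quotient is correctly computed upstairs on $Y^{+}$ along any lift $S'_{\infty,j}$ (this is fine since $V$ acts freely, so the quotient map is étale near the resolution divisor, cf.\ \cite[Lemma 3.1]{VDG}), and that the relation \eqref{eq:12} makes the infimum over the orbit well-defined — i.e.\ $\Tr(\xi A_{\epsilon j})=\Tr(\xi \epsilon A_j')$ type identities match $a_{\xi}\neq 0 \iff a_{\xi\epsilon^2}\neq 0$. A second, more routine, point is verifying that consecutive cones share exactly the ray through $A_j$ so that each $S'_{\infty,j}$ shows up with multiplicity one in $S_1$ and the two chart computations (as $\{v_j=0\}$ in $\sigma_j$ and as $\{u_{j+1}=0\}$ in $\sigma_{j+1}$) agree; this is where one invokes that $(A_{j-1},A_j)$ is a $\ZZ$-basis of $M$ for every $j$.
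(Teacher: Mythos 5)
Your proposal is correct and follows essentially the same route as the paper: express each Fourier term in the toroidal chart attached to a cone $\sigma_j$ as the monomial $u^{\Tr(\xi A_{j-1})}v^{\Tr(\xi A_j)}$, read off the vanishing order along each component of the resolution divisor, and use \eqref{eq:12} to reduce the infimum over all corners to representatives $j\in J$. The extra bookkeeping you flag (the $V$-action being free near the resolution divisor, consecutive cones sharing the ray through $A_j$) is exactly what the paper leaves implicit, and is handled correctly.
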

\begin{proof}
  By \eqref{eq:12}, the vanishing condition for the coefficients of
  the Fourier expansion is equivalent to the condition $a_{\xi}=0$ for all
  $\xi\in M^{\vee}_{+}\cup\{0\}$ such that there is a $j\in \ZZ$ with
  $\Tr(\xi A_{j})< a$.
  Let $A_{j},A_{j+1}$ be a totally 
  positive basis of $M$ as in Section \ref{sec:cusp-resolution}. To
  this basis, 
  there is associated a
  local analytic chart of a piece of the cusp resolution. Let $u,v$ be 
  the local coordinates of this chart. The divisors $u=0$ and $v=0$
  correspond to components of the cusp resolution divisor.  
  With these coordinates, the
  Fourier expansion of $G$, is given by
  \begin{displaymath}
    G(u,v)=\sum_{\xi \in M^{\vee}_{+}\cup\{0\}} a_\xi
    u^{\Tr(\xi A_{j})} v^{\Tr(\xi A_{j+1})}.
  \end{displaymath}
  Thus, the lemma follows directly from the definition of order of
  vanishing at 
  a cusp.
\end{proof}

From the lemma, it is clear that a modular form is a cusp form
if and only if it vanishes at each cusp with order one.

Let $G\in M_{2}(\Gamma )$ be a modular form of parallel weight
$2$. Then $\omega _{G}=Gdz_{1}\land 
dz_{2}$ is a $\Gamma $-invariant differential form on
$\Siegel^{2}$. Thus, it defines a differential form on  $\Gamma \backslash
\Siegel^{2}$, hence on an open subset of $Y_{\Gamma }$. It can be seen
(\cite[Ch 3. \S 3]{VDG})
that $\omega _{G}$ can be extended to a differential form on
$Y_{\Gamma }$ that is regular on the resolution divisors of the elliptic fixed
points and has at most logarithmic poles at the resolution divisors of
the cusps. This gives us the identifications
\begin{displaymath}
  S_{2}(\Gamma )\overset{\simeq}{\longrightarrow}H^{0}(Y_{\Gamma },\Om(K_{Y_\Gamma })),\qquad
M_{2}(\Gamma )\overset{\simeq}{\longrightarrow}H^{0}(Y_{\Gamma },\Om(K_{Y_{\Gamma }}+S)),
\end{displaymath}
where $K_{Y_{\Gamma }}$ is the canonical divisor of $Y_{\Gamma }$ and
$S=\sum S_{i}$ is the sum of the resolution divisors of all the cusps.

From the above identifications one can derive the following result.

\begin{prop}\label{prop:1}
  Let $\Gamma $ be a congruence subgroup, $\{c_{1},\dots,c_{h}\}$ the
  set of cusps of $X_{\Gamma }$, $S_{i}$ the resolution divisor of
  $c_{i}$ on $Y_{\Gamma }$ and  $S=\sum S_{i}$. Fix integers $1\le
  i_{0}\le h$, $a,s\ge 0$. Then we can identify the space of all
  modular forms for $\Gamma $ of parallel weight $2k$, vanishing order
  at least $s$ at all the cusps and at least $a+s$ at the cusp $i_{0}$
  with the space of global sections 
  $H^{0}(Y_{\Gamma },\Om(kK_{Y_{\Gamma }}+(k-s)S-aS_{i_{0}}))$.
\end{prop}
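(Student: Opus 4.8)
The plan is to bootstrap from the two identifications stated just before the proposition, namely $S_2(\Gamma)\simeq H^0(Y_\Gamma,\Om(K_{Y_\Gamma}))$ and $M_2(\Gamma)\simeq H^0(Y_\Gamma,\Om(K_{Y_\Gamma}+S))$, upgrading them to a statement about forms of parallel weight $2k$ with prescribed vanishing along the cusp resolutions. First I would treat the case $k=1$, $s=0$ directly: a form $G\in M_2(\Gamma)$ corresponds to $\omega_G=G\,dz_1\wedge dz_2$, which extends to a global section of $\Om(K_{Y_\Gamma}+S)$; the key point is that $G$ vanishes to order $a$ at the cusp $c_{i_0}$ (in the sense of the definition via $\Div(f)-aS_{i_0}$ effective in a neighborhood $U_{i_0}$) if and only if the corresponding section of $\Om(K_{Y_\Gamma}+S)$ vanishes along $S_{i_0}$ with order at least $a$, i.e. lies in $H^0(Y_\Gamma,\Om(K_{Y_\Gamma}+S-aS_{i_0}))$. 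This is essentially local near the cusp: in the analytic chart with coordinates $u,v$ used in the proof of Lemma~\ref{lemm:1}, the components of $S_{i_0}$ are $\{u=0\}$ and $\{v=0\}$, and the translation-invariance of $dz_1\wedge dz_2$ turns the logarithmic pole of $\omega_G$ precisely into the identification between "$G$ vanishes with order $a$" and "the section vanishes along $S_{i_0}$ with order $\ge a$."

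Next I would pass to general parallel weight $2k$. The point is that $G\in M_{2k}(\Gamma)$ corresponds, via $(dz_1\wedge dz_2)^{\otimes k}$, to a section of $\Om(kK_{Y_\Gamma}+kS)$: indeed $\omega_G^{\otimes k}$ is $\Gamma$-invariant, extends over the elliptic resolution divisors, and has a pole of order at most $k$ along each component of $S$ (the order-$k$ power of a logarithmic pole), and conversely any such section with at most order-$k$ poles along $S$ comes from a weight-$2k$ form — this is the standard dictionary, and for $k=1$ it is exactly the second identification quoted from \cite[Ch.~3, \S3]{VDG}. Then requiring vanishing of order $\ge s$ at every cusp means the section of $\Om(kK_{Y_\Gamma}+kS)$ actually vanishes along each $S_i$ to order $\ge s$ relative to this bound, i.e. has poles of order at most $k-s$ along each component, so it lies in $H^0(Y_\Gamma,\Om(kK_{Y_\Gamma}+(k-s)S))$; imposing the extra vanishing of order $\ge a+s$ at the single cusp $c_{i_0}$ subtracts a further $aS_{i_0}$, landing in $H^0(Y_\Gamma,\Om(kK_{Y_\Gamma}+(k-s)S-aS_{i_0}))$. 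Conversely every section of this sheaf gives back a modular form with the stated vanishing, by running the dictionary backwards; I would spell this equivalence out component by component of $S$ using Lemma~\ref{lemm:1} to translate between the divisor-theoretic vanishing order along the $S_i$ and the combinatorial vanishing order $\inf\{\Tr(\xi A_j)\}$ read off the Fourier coefficients.

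The main obstacle is the bookkeeping of divisor orders at the cusp resolutions, in particular checking that "$G$ vanishes with order $a$ at the cusp $c_{i_0}$" in the sense of the Definition (the divisor $\Div(f)-aS_{i_0}$ effective in $U_{i_0}$, where $S_{i_0}=\sum_j S_{i_0,j}$ is possibly reducible) corresponds exactly to subtracting $aS_{i_0}$ from the line bundle, uniformly over all components $S_{i_0,j}$ and across the overlaps of the charts coming from adjacent cones $\sigma_j$. The subtlety is that a priori the vanishing order could differ from component to component, but the definition of $\ord_{c_{i_0}}G$ and Lemma~\ref{lemm:1} (which expresses it as a single infimum over all $A_j$) show that the divisor $\Div(f)-aS_{i_0}$ being effective forces the same lower bound $a$ along every component simultaneously; once this is pinned down, the identification of global sections is a formal consequence of the local pictures glued along the cusp resolution, together with the fact that $\omega_G^{\otimes k}$ is regular away from $S$ (here one uses that the elliptic-point resolution divisors impose no condition, as recorded in the $k=1$ identifications and in \cite[Ch.~3, \S3]{VDG}). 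So I would organize the proof as: (1) recall the $k=1$ dictionary; (2) take $k$-th powers to get the weight-$2k$ dictionary with poles bounded by $k$ along $S$; (3) use Lemma~\ref{lemm:1} and the definition of order of vanishing at a cusp to match the vanishing conditions with the twists $-(s)S$ and $-aS_{i_0}$; (4) conclude.
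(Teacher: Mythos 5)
Your treatment of the cusps is correct and is exactly the derivation the paper intends (the paper offers no proof of Proposition~\ref{prop:1} beyond asserting that it follows from the two $k=1$ identifications): in the chart $u,v$ of Lemma~\ref{lemm:1} one has $dz_{1}\wedge dz_{2}=c\,\tfrac{du}{u}\wedge\tfrac{dv}{v}$, so the section of $\Om(k(K_{Y_{\Gamma}}+S))$ attached to $G$ is locally $f$ times a generator, and the definition of $\ord_{c_{i}}G$ via effectivity of $\Div(f)-aS_{i}$ translates verbatim, uniformly over the components $S_{i,j}$, into replacing $kS$ by $(k-s)S-aS_{i_{0}}$.

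The genuine gap is at the elliptic points, and your parenthetical ``the elliptic-point resolution divisors impose no condition, as recorded in the $k=1$ identifications'' does not close it: regularity of $\omega_{G}$ for $k=1$ does not imply regularity of $\omega_{G}^{\otimes k}$ for $k\ge 2$, because the exceptional curves over these quotient singularities have negative discrepancy. Concretely, an order-$3$ elliptic point of type $(3;1,1)$ is resolved by a single rational curve $E$ with $E^{2}=-3$, so $K_{Y_{\Gamma}}=f^{*}K_{X_{\Gamma}}-\tfrac{1}{3}E$ near $E$; writing the local action as $(z,w)\mapsto(\zeta z,\zeta w)$ and passing to the resolution chart $\sigma=s^{3}$, $t$ (with $z=s$, $w=st$), one finds that $G\,(dz_{1}\wedge dz_{2})^{\otimes k}$ has a pole of order $\lfloor k/3\rfloor$ along $E$ unless $G$ vanishes to a compensating order at the fixed point. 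For $k=1,2$ the invariance of the tensor forces enough automatic vanishing of the low-order Fourier--Taylor coefficients to kill the pole, which is why the quoted $k=1$ identification holds; but already a weight-$6$ form ($k=3$) not vanishing at such a point yields a section with a simple pole along $E$, hence \emph{not} an element of $H^{0}(Y_{\Gamma},\Om(3(K_{Y_{\Gamma}}+S)))$, and higher-order elliptic points can cause failure for even smaller $k$. So your forward direction (form $\Rightarrow$ section) fails for general $\Gamma$. To be fair, this defect is in the statement of Proposition~\ref{prop:1} itself and not only in your argument, and it is harmless where the proposition is actually used: Theorems~\ref{thm:1} and~\ref{thm:6} apply it on $Z_{n}=Y_{\Gamma((n),\id{a})}$, which has no elliptic points by Remark~\ref{rem:ellipticpoints}, and there your proof is complete. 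A clean repair is to state the proposition only for groups acting without elliptic fixed points, or to add the appropriate correction divisor supported on the exceptional curves over the non-canonical quotient singularities.
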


\section{Hecke Bound}
\label{sec-Hecke}

In this section we will derive a Hecke type bound for Hilbert modular
forms for the group $\Gamma (\Om_{K},\id{a})$.  We will assume that
$D>0$ is such that $Z:=Z_{\Gamma (\Om_K,\id{a})}$ is not
rational. Choose $n$ such that $Z_n:=Z_{\Gamma ((n),\id{a})}$ is a
minimal surface of general type (see Summary~\ref{summ:1}).

Let $S$ be the cusp resolution divisor on $Z$. We order the cusps of
$X_{\Gamma (\Om_K,\id{a})}$ as $c_{i}$, $i=1,\dots,h$ and we decompose $S$ as
\begin{displaymath}
  S=\sum_{i=1}^{h}S_{i},
\end{displaymath}
where $S_{i}$ is the resolution divisor over the cusp $c_{i}$.
 
For each $i=1,\dots,h$, let $b_{i,j}\ge 2$ be the integers that appear
in the cusp 
desingularization process of $X_{\Gamma (\Om_K,\id{a})}$ as explained
in Appendix~\ref{appendix:a}. Let $1\le i_{0}\le h$.

\begin{thm}[Hecke bound] \label{thm:1}
With the previous hypothesis on $D$ and $n$,
let $G$ be a Hilbert modular form of parallel weight $2k$ for
$\Gamma(\Om_{K},\id{a})$ and suppose that $\ord_{c_{i}}G\ge s$ for
$i=1,\dots,h$ and $\ord_{c_{i_{0}}}G\ge a+s$, with
\[
a > \frac{4kn \zeta_K(-1)}{\sum_j (b_{i_{0},j}-2)}-s
\left(\frac{\sum_{i=1}^h \sum_j
    (b_{i,j}-2)}{\sum_j(b_{i_{0},j}-2)}\right).
\]
Then $G$ is zero.
\end{thm}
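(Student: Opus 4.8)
The plan is to mimic the geometric proof of the classical Hecke bound from Section~1, but working on the minimal surface of general type $Z_n = Z_{\Gamma((n),\id{a})}$ rather than on the modular curve. First I would pull back $G$ along $\pi\colon Z_n \to Z$. By Proposition~\ref{prop:1} (applied on $Z_n$, which equals $Y_{\Gamma((n),\id{a})}$ since there are no elliptic points at this level by Remark~\ref{rem:ellipticpoints}), a parallel weight $2k$ form vanishing to order $\ge s$ at all cusps and $\ge a+s$ at $c_{i_0}$ gives a global section of $\mathcal{O}(kK_{Z_n} + (k-s)\widetilde{S} - a\widetilde{S}_{i_0})$, where $\widetilde{S}$ and $\widetilde{S}_{i_0}$ are the cusp resolution divisors on $Z_n$ lying over $S$ and $S_{i_0}$. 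I need to track how the pullback $\pi^*G$ relates the divisors upstairs to those downstairs: using Remark~\ref{rem:ramification}, $\pi$ has degree $n$ on each component of a cusp resolution and is ramified of degree $n$ there, so the relevant multiplicities of $\pi^*(\text{cusp divisor on }Z)$ match up appropriately.

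The key idea is then to intersect this effective divisor class against a suitable nef divisor and derive a contradiction if $a$ is too large. Since $Z_n$ is minimal of general type, its canonical divisor $K_{Z_n}$ is nef, so $K_{Z_n}$ itself plays the role of the ``degree function'' from the curve case. The steps: (1) Since $kK_{Z_n} + (k-s)\widetilde{S} - a\widetilde{S}_{i_0}$ is effective (it has a nonzero section unless $G=0$), its intersection with the nef divisor $K_{Z_n}$ must be $\ge 0$. (2) Compute $K_{Z_n}\cdot K_{Z_n}$, $K_{Z_n}\cdot \widetilde{S}$, and $K_{Z_n}\cdot \widetilde{S}_{i_0}$. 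For the first, I would use the proportionality-type formula $K_{Z}^2$-type computations for Hilbert modular surfaces, where the volume term is governed by $\zeta_K(-1)$ (this is the source of the $4kn\zeta_K(-1)$ in the statement, with the factor $n$ coming from $\deg\pi$ and the bookkeeping of the resolution), and the adjunction/correction terms on the cusp resolution cycles contribute the $\sum_j(b_{i,j}-2)$ sums. For $K_{Z_n}\cdot \widetilde{S}_{i_0}$, adjunction on a cycle of rational curves with self-intersections $-b_{i_0,j}$ gives exactly $\sum_j(b_{i_0,j}-2)$ (possibly scaled by $n$ from the pullback). (3) Assemble: $0 \le (kK_{Z_n} + (k-s)\widetilde{S} - a\widetilde{S}_{i_0})\cdot K_{Z_n}$ becomes a linear inequality in $a$, and solving for $a$ reproduces the displayed bound; hence if $a$ exceeds it, the section — and therefore $G$ — must vanish.

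One subtlety worth flagging: I should confirm that $K_{Z_n}\cdot K_{Z_n} \ge K_{Z_n}\cdot(\text{the section's divisor})$ in the right direction, i.e. that the sign conventions make the inequality go the way I want; the cleanest route is to note that for any effective divisor $E$ on a minimal surface of general type, $K_{Z_n}\cdot E \ge 0$, and apply this to $E = kK_{Z_n} + (k-s)\widetilde{S} - a\widetilde{S}_{i_0}$, noting $(k-s)\ge 0$ and $\widetilde{S}$ is effective so its contribution has a definite sign. I also need $K_{Z_n}\cdot \widetilde{S}_i \ge 0$ for all $i$ (true since $K_{Z_n}$ nef and $\widetilde{S}_i$ effective), and the precise value $K_{Z_n}\cdot S_i = \sum_j(b_{i,j}-2)$ on $Z$, which follows from adjunction on the cycle of rational curves since each component $S_{i,j}\cong\PP^1$ has $(K_{Z} + S_{i,j})\cdot S_{i,j} = -2$ and $S_{i,j}\cdot S_{i,j} = -b_{i,j}$ while neighboring components meet it once.

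The main obstacle I expect is step (2): getting the constant in $K_{Z_n}^2$ exactly right, in particular correctly accounting for the interplay between the volume term $\Vol = 2\zeta_K(-1)$ (up to normalization), the degree-$n$ cover $\pi$, the ramification along the cusp resolutions described in Remark~\ref{rem:ramification}, and the difference between $Z$ (cusps resolved, elliptic points not) and a genuinely smooth model. Keeping the two sums $\sum_{i}\sum_j(b_{i,j}-2)$ (the ``$C$''-type quantity controlling minimality, appearing with coefficient $s$) distinct from the single-cusp sum $\sum_j(b_{i_0,j}-2)$ (appearing in both denominators) is the combinatorial heart of the bookkeeping, and that is exactly what the final inequality packages.
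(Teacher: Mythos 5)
Your proposal is correct and follows essentially the same route as the paper: a nonzero $G$ determines a global section of $\Om(k(K_{Z_n}+S')-snS'-anS'_{i_0})$ on the minimal surface of general type $Z_n$, and intersecting with the nef divisor $K_{Z_n}$, using adjunction on the cusp cycles for $K_{Z_n}\cdot S'_i$ and the volume formula $K_{Z_n}\cdot K_{Z_n}=4d\zeta_K(-1)+\frac{d}{n^2}\sum_i\sum_j(2-b_{i,j})$, yields a negative number under the stated hypothesis on $a$. The one point to make precise is exactly the one you flagged: since $\pi^*(S_i)=nS'_i$ by Remark~\ref{rem:ramification}, the vanishing orders $s$ and $a+s$ at the cusps of $X_{\Gamma(\Om_K,\id{a})}$ translate into the coefficients $sn$ and $an$ (not $s$ and $a$) in the divisor upstairs, and this is where the factor $n$ in the numerator of the final bound originates.
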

\begin{remark}\label{rem:1}
  For each $i$ there is a $j$ with $b_{i,j}>2$, since otherwise, the
  desingularization divisor of the cusp $c_{i}$ has self-intersection
  $0$, which contradicts Hodge index theorem. Therefore, the
  denominators in the above expression are different from zero.
\end{remark}

Before proving the theorem we need some known results.
Recall that $Z_n$ does not have elliptic points, hence is already
smooth. Let $\pi \colon Z_n\to Z$ be the projection and $d$ its
degree. Let $c'$ be the number of cusps of
$X_{\Gamma ((n),\id{a})}$ that are over a cusp of $X_{\Gamma (\Om_K,\id{a})}$.
By \cite[Lemma 5.2, Chapter
IV]{VDG} and its proof
\begin{equation}
  \label{eq:10}
  d=n^{2}c'[U^{2}_{\Om_{K}}:U^{2}_{\Om_{K},(n)}].
\end{equation}
For each $i'=1,\cdots,hc'$, let
$b'_{i',j}$ be the integers that appear
in the cusp 
desingularization process of $X_{\Gamma ((n),\id{a})}$. Let $c_{i'}$
be a cusp of  $X_{\Gamma ((n),\id{a})}$ over a cusp $c_{i}$ of
$X_{\Gamma (\Om_K,\id{a})}$. Then the sequence $(b'_{i',j})_{j}$ is a
repetition of $[U^{2}_{\Om_{K}}:U^{2}_{\Om_{K},(n)}]$ times the
sequence $(b_{i,j})_{j}$. Therefore
\begin{equation}
  \label{eq:11}
  \sum_{i'=1}^{c'h}\sum
  _{j}(2-b'_{i',j})=\frac{d}{n^{2}}\sum_{i=1}^{h}\sum
  _{j}(2-b_{i,j}).
\end{equation}

Let $S'$ be the 
cusp resolution divisor of $Z_n$ and, for $i=1,\dots h$, let $S_{i}'$
be the sum of the resolution divisor of all the cusps on $Z_n$ over
$c_i$. By Remark \ref{rem:ramification} we have
\begin{equation}\label{eq:1}
  \pi ^{\ast}(S)=nS'\qquad\text{ and }\qquad \pi ^{\ast}(S_{i})=nS_{i}'.
\end{equation}
By the geometry of the cusp resolutions (see Appendix
\ref{appendix:a}) we have
\begin{displaymath}
  S_{i}\cdot S_{l}= 
  \begin{cases}
    \sum
  _{j}(2-b_{i,j})&\text{ if }i=l,\\
  0&\text{ if }i\not=l.
  \end{cases}
\end{displaymath}
From this, using equation \eqref{eq:1}, we deduce
\begin{equation}
  \label{eq:9}
  S'_{i}\cdot S'_{l}=  
  \begin{cases}
    \frac{d}{n^{2}}\sum
  _{j}(2-b_{i,j})&\text{ if }i=l,\\
  0&\text{ if }i\not=l.
  \end{cases}  
\end{equation}

Let $K_{Z_{n}}$ be the canonical divisor of $Z_{n}$. Since each divisor $S_{i}$
is a cycle of rational curves, it has arithmetic genus $1$. Then
the adjunction formula implies that
\begin{equation}
  \label{eq:5}
  (K_{Z_{n}}+S'_{i})\cdot S'_{i}=0, \quad (K_{Z_{n}}+S')\cdot S'=0. 
\end{equation}
Therefore
\begin{equation}
  \label{eq:8}
  K_{Z_{n}}\cdot S'_{i}=\frac{d}{n^{2}}\sum
  _{j}(b_{i,j}-2),\qquad  K_{Z_{n}}\cdot S'=\frac{d}{n^{2}}\sum_{i=1}^{h}\sum
  _{j}(b_{i,j}-2).
\end{equation}

Moreover, by  \cite[Chapter IV, Theorem 2.5]{VDG} (page 64), 
\cite[Chapter IV, Theorem 1.1]{VDG}, (pp. 59) and equation \eqref{eq:11},
\begin{equation}\label{eq:6}
  K_{Z_{n}}\cdot K_{Z_{n}}=2\Vol(Z_{n})+\frac{d}{n^{2}}\sum_{i=1}^{h}\sum
  _{j}(2-b_{i,j})=4 d\zeta_K(-1) +\frac{d}{n^{2}}\sum_{i=1}^{h}\sum
  _{j}(2-b_{i,j}).
\end{equation}
\begin{proof}[Proof of Theorem \ref{thm:1}]
Since $G$ is a Hilbert
modular form of parallel weight $2k$ that vanishes with order $s$ at
every cusp and with order $a+s$ at
the cusp $c_{i_{0}}$, by Proposition \ref{prop:1}, it determines a
global section of 
$\Om(k(K_{Z_{n}}+S')-snS'-anS_{i_0}')$. Since $Z_n$ is a minimal surface
of general type, $K_{Z_{n}}$ is NEF. Hence, if $G\not=0$,
the intersection number $K_{Z_{n}} \cdot (k(K_{Z_{n}}+S')-snS'-anS_{i_0}')$
must be non-negative. If we prove that this
number is negative, we are done. Using equations
\eqref{eq:8} and \eqref{eq:6},
 we obtain
\begin{multline}\label{eq:7}
K_{Z_{n}} \cdot (k(K_{Z_{n}}+S')-snS'-anS_{i_0}')\\
= d\left(4k\zeta_k(-1) +\frac{s}{n} \sum_{i=1}^h \sum_j (2-b_{i,j})
  +\frac{a}{n} \sum_j(2-b_{i_0,j})\right)
\end{multline}
proving the Theorem.
\end{proof}

By virtue of Lemma~\ref{lemm:1}, we can state the same result in terms
of  Fourier expansions. For simplicity we will treat only the case of
the infinity cusp. Assume that we have numbered the cusps in such a
way that the infinity cusp is $c_{1}$. The lattice corresponding to
the isotropy group of the infinity cusp is $M=\id{a}^{-1}$ and the
group of units $V$ equals $U^{2}_{\Om_K}$. Let $\{A_{j}\}_{j\in J}$ be
a set of representatives under the action of $U^{2}_{\Om_K}$ of the corners of 
the convex hull of $(\id{a}^{-1})_{+}$.

\begin{coro} \label{coro:mainHecke}
  With the same hypothesis on $D$ and $n$, let $G$ be a Hilbert
  modular form of parallel weight $2k$ for $\Gamma(\Om_{K},\id{a})$ which
  vanishes with order $s$ at all the cusps. Let $a$ be an integer with
  \[
  a > \frac{4kn \zeta_K(-1)}{\sum_j (b_{1,j}-2)}-s
  \left(\frac{\sum_{i=1}^h \sum_j
      (b_{i,j}-2)}{\sum_j(b_{1,j}-2)}\right).
  \] 
 Suppose that the Fourier expansion of $G$ at the infinity cusp is 
  \[
  G= \sum_{\xi \in (\id{a}^{-1})_{+}^{\vee}\cup\{0\}} a_\xi \exp(2\pi i(\xi
  z_1 + \xi' z_2)).
  \]
If $a_{\xi}=0$ for all $\xi\in (\id{a}^{-1})_{+}^{\vee}\cup\{0\}$ such
that there is a $j\in J$ with $\Tr(\xi A_{j})< a+s$, then $G=0$.
\end{coro}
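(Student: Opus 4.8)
The plan is to reduce Corollary~\ref{coro:mainHecke} to Theorem~\ref{thm:1} by translating the hypothesis on the Fourier coefficients into a statement about the order of vanishing at the infinity cusp $c_1$. Recall that the infinity cusp has associated lattice $M=\id{a}^{-1}$ and the dual lattice $M^\vee=(\id{a}^{-1})^\vee$, so the Fourier expansion of $G$ is indexed exactly by $(\id{a}^{-1})_+^\vee\cup\{0\}$ as written. By Lemma~\ref{lemm:1} applied to this cusp, the order of vanishing of $G$ at $c_1$ is
\begin{displaymath}
  \ord_{c_1}G=\inf\{\Tr(\xi A_j)\mid j\in J,\ a_\xi\neq 0\},
\end{displaymath}
where $\{A_j\}_{j\in J}$ is the given set of representatives of the corners of the convex hull of $(\id{a}^{-1})_+$. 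The hypothesis of the corollary says precisely that $a_\xi=0$ whenever $\Tr(\xi A_j)<a+s$ for some $j\in J$; equivalently, for every $\xi$ with $a_\xi\neq 0$ and every $j\in J$ we have $\Tr(\xi A_j)\ge a+s$, so by the displayed formula $\ord_{c_1}G\ge a+s$.

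Next I would handle the vanishing at the other cusps. The corollary assumes $G$ vanishes with order $s$ at all the cusps, i.e. $\ord_{c_i}G\ge s$ for $i=1,\dots,h$; combined with the conclusion of the previous paragraph we get $\ord_{c_{i_0}}G\ge a+s$ with $i_0=1$. Thus all hypotheses of Theorem~\ref{thm:1} are met with $i_0=1$: the condition
\begin{displaymath}
  a>\frac{4kn\zeta_K(-1)}{\sum_j(b_{1,j}-2)}-s\left(\frac{\sum_{i=1}^h\sum_j(b_{i,j}-2)}{\sum_j(b_{1,j}-2)}\right)
\end{displaymath}
is literally the hypothesis of the corollary with $i_0=1$, and we have just verified the order-of-vanishing hypotheses. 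Applying Theorem~\ref{thm:1} gives $G=0$.

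There is essentially no hard step here; the only things to be careful about are bookkeeping points rather than genuine obstacles. First, one must make sure the same set of corner representatives $\{A_j\}_{j\in J}$ that appears in the statement is the one used in Lemma~\ref{lemm:1}, so that the translation between ``$a_\xi=0$ for $\Tr(\xi A_j)<a+s$'' and ``$\ord_{c_1}G\ge a+s$'' is exact rather than merely an inequality in one direction — but Lemma~\ref{lemm:1} already gives the clean equality $\ord_{c_1}G=\inf\{\Tr(\xi A_j)\}$, so this is immediate. Second, one should note that the integrality requirement $k_1\equiv k_2\pmod 2$ and the parallel-weight hypothesis are inherited from the corollary's setup, and that the standing assumptions on $D$ and $n$ (namely that $Z$ is not rational and $Z_n$ is a minimal surface of general type) are carried over verbatim, so Theorem~\ref{thm:1} applies without further checking. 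Hence the proof is a one-line invocation of Lemma~\ref{lemm:1} followed by Theorem~\ref{thm:1}.
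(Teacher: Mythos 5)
Your proof is correct and is exactly the argument the paper intends: the paper simply says the corollary follows ``by virtue of Lemma~\ref{lemm:1}'', i.e.\ one translates the vanishing of the Fourier coefficients with $\Tr(\xi A_j)<a+s$ into $\ord_{c_1}G\ge a+s$ and then invokes Theorem~\ref{thm:1} with $i_0=1$. Nothing is missing.
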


\begin{remark}
  \begin{enumerate}
  \item   Although both Theorem~\ref{thm:1} and Corollary~\ref{coro:mainHecke}
  are stated for forms vanishing with order $s$ at all cusps, the two
  usual cases are $s=0$ for a general Hilbert
  modular form and $s=1$ for a cusp form.
\item It is clear from Theorem~\ref{thm:1} and
  Corollary~\ref{coro:mainHecke} that the smaller the $n$, the better
  the bound we get.
  \end{enumerate}
\end{remark}

\begin{remark}
  The bound we got in Theorem~\ref{thm:1} relies on the choice of an
  auxiliary positive integer $n$ such that $Z_{n}$ is a minimal
  surface of general type, and there is a dependence of $n$ in the
  formula. We can think of this dependence in a somehow different
  way. We need to construct a NEF divisor in some surface. What we did
  was to start with a parallel weight $2k$ Hilbert modular form $G$
  for $\Gamma(\Om_{K},\id{a})$ and considered its pullback to $Z_{n}$,
  where we can identify a NEF divisor, namely the canonical
  divisor. But we can do the opposite, recall the following result
  concerning NEF divisors under maps.

\medskip

\noindent{\bf Fact:} Let $\pi : X \to Y$ be a surjective generically finite map
    between surfaces. Let $\bfD \subset Y$ be a Cartier divisor. Then
    $\bfD$ is a NEF divisor if and only if $\pi^*(\bfD)$ is a NEF
    divisor.

\medskip

This implies that we can do the computations in ``level 1''. Take any
(rational) divisor $\bfD$ in $Z_1$ whose pullback to $Z_n$ is the
canonical divisor and compute the intersection numbers with it (which
of course gives the same bound). Thus, the dependence on $n$ does not
come from where we compute the intersection numbers but from where we
can identify a NEF divisor.

  Thus, there are two ways for getting a better
  bound in some particular cases:
  \begin{enumerate}
  \item If one can compute the cone of NEF divisors, one can make the
    same computations for each generator of the NEF cone to get the
    best bound. 
  \item If $Y_{\Gamma(\Om_K,\id{a})}$ is of general type (which
    happens for example if $D>500$), one can compute its minimal
    model, and take as NEF divisor any divisor $\bfD$ in
    $Z_{\Gamma(\Om_K,\id{a})}$ whose pullback to the minimal model is
    the canonical divisor 
    to get a bound with ``$n=1$''.
  \end{enumerate}
\end{remark}

\section{Sturm bound}
\label{sec-Sturm}

To make the computation of the previous section work over a finite
field, we need to use the integral structure of the Hilbert modular
surface. Such structure comes from
their moduli interpretation and has been developed in \cite{Rapoport},
\cite{Chai} and \cite{Pappas}, see also the book \cite{Go}. 

Let $D>0$ be a fundamental discriminant.
Let $\id{a}$ be a fractional ideal, $n \ge 3$ a positive integer and
$\zeta_n$ a primitive $n$-th root 
of unity. Consider the modular surface $Y_{\Gamma ((n),\id{a})}$ and
let $S'$ be the cusp resolution. The first input we need is the
existence of a nice regular model of $Y_{\Gamma ((n),\id{a})}$.

\begin{thm}\label{thm:3}  There exist a regular scheme $\specY_{\Gamma
    ((n),\id{a})}$, 
  smooth, proper and flat over $\ZZ[1/(Dn),\zeta_n]$, such that
  \begin{displaymath}
    \specY_{\Gamma
      ((n),\id{a})}\underset{\ZZ[1/(Dn),\zeta_n]}{\times}\Spec(\CC)=Y_{\Gamma
      ((n),\id{a})}.
   \end{displaymath}
   Moreover, there is a relative normal crossing divisor $\Sc'$ of
   $\specY_{\Gamma ((n),\id{a})}$ whose restriction to $Y_{\Gamma
     ((n),\id{a})}$ is $S'$.
\end{thm}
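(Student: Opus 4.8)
The plan is to build the model from the moduli interpretation of Hilbert modular surfaces, following the references cited just before the statement (\cite{Rapoport}, \cite{Chai}, \cite{Pappas}, and the book \cite{Go}). First I would recall that for $n\ge 3$ the functor classifying abelian surfaces with real multiplication by $\Om_K$, $\id{a}$-polarization, and full level-$n$ structure is representable by a scheme $\bfA_{\id{a},n}$ that is smooth and quasi-projective over $\ZZ[1/n,\zeta_n]$; the level-$n$ rigidity is exactly what kills the automorphisms and makes the moduli problem fine (hence smoothness, by deformation theory of abelian varieties — the obstruction spaces vanish away from the residue characteristics dividing $n$). Base-changing to $\CC$ recovers the open Hilbert modular variety $\Gamma((n),\id{a})\backslash\Siegel^2$; in particular, since $n\ge 3$, Remark~\ref{rem:ellipticpoints} applies (note $(n)^2\neq(2),(3)$ for $n\ge 3$), so there are no elliptic points and $Y_{\Gamma((n),\id{a})}=Z_{\Gamma((n),\id{a})}$ is already the Baily--Borel space with only the cusps to add.

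Second, I would invoke a toroidal compactification of $\bfA_{\id{a},n}$ over $\ZZ[1/(Dn),\zeta_n]$: after inverting $D$ (so that the different $\delta$ and hence the relevant lattices behave well and the cone decompositions used in Section~\ref{sec:cusp-resolution} are defined integrally), the arithmetic toroidal compactification of Rapoport (and Pappas, Chai) produces a proper smooth scheme $\specY_{\Gamma((n),\id{a})}$ over $\ZZ[1/(Dn),\zeta_n]$ containing $\bfA_{\id{a},n}$ as a dense open, together with a relative normal crossings boundary divisor. The key point is that the combinatorial data defining this compactification — the rational polyhedral cone decomposition of $(\id{a}^{-1}(n))_+$ in each cusp, built from the vertices $A_j$ of Section~\ref{sec:cusp-resolution} — is insensitive to the base and literally produces, on the complex fiber, the cusp resolution described earlier; hence $\specY_{\Gamma((n),\id{a})}\times_{\ZZ[1/(Dn),\zeta_n]}\CC = Y_{\Gamma((n),\id{a})}$ and the boundary divisor $\Sc'$ restricts to $S'$. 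Flatness over the Dedekind base $\ZZ[1/(Dn),\zeta_n]$ is automatic once the total space is regular and the morphism is dominant; properness and smoothness are part of the toroidal construction, and relative normal crossings of $\Sc'$ is as well.

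The main obstacle I expect is \emph{integral} control of the cusps: one must be sure that the toroidal boundary over the base (not just over $\CC$) is smooth over $\ZZ[1/(Dn),\zeta_n]$ with normal crossings fibers, and that it matches the van der Geer cusp resolution of Section~\ref{sec:cusp-resolution} on the generic fiber. This is where inverting $D$ is essential — ramification in $K/\Q$ would otherwise spoil smoothness of the boundary charts — and where one must check that the units $U^2_{\Om_K,(n)}$ act freely on the cone decomposition so that the quotient toric charts are smooth; this freeness is exactly \cite{VDG}, Lemma~3.1, Ch.~III, already quoted in Section~\ref{sec:cusp-resolution}. I would cite Rapoport's thesis for the existence of the arithmetic toroidal compactification with these properties and Pappas for the refinement needed when $\id{a}$ is non-trivial, and note that the compatibility with the analytic cusp resolution follows by comparing the local toric charts, which on both sides are governed by the same fan $\{\sigma_j\}$.
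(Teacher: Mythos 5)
Your proposal is correct and follows essentially the same route as the paper, which simply cites the arithmetic toroidal compactifications of Rapoport (Th\'eor\`eme 5.1 and Corollaire 5.3), Chai (Theorem 3.6) and Pappas (Theorem 2.1.2) for exactly the statements you reconstruct; your sketch just makes explicit the moduli-theoretic and toric ingredients those references supply.
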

\begin{proof}
See \cite{Chai} Theorem 3.6, \cite{Rapoport} Th\'eor\`eme
5.1 and Corollaire 5.3. and \cite{Pappas} Theorem 2.1.2.
\end{proof}

The second input we need is the $q$-expansion principle. Let $K$ be
the canonical divisor of $Y_{\Gamma ((n),\id{a})}$ and let $\bfK$ be
the relative canonical divisor of $\specY_{\Gamma ((n),\id{a})}$. Let $R$ be a
subalgebra of $\CC$ that contains $\ZZ[1/(Dn),\zeta_{n}]$. We will denote
by $\specY_{\Gamma ((n),\id{a}),R}$, $\bfK_{R}$ and $\Sc'_{R}$ the objects obtained
after extending scalars to $R$. We know that a modular form of parallel 
weight $2k$ determines a section of $\Om_{Y_{\Gamma ((n),\id{a})}}(k(K+S'))$.

\begin{thm} \label{thm:5} Let $G$ be a Hilbert modular form of
  parallel weight $2k$ 
  for $\Gamma ((n),\id{a})$, and  let 
\[
  G= \sum_{\xi \in M^{\vee}_+\cup\{0\}} a_\xi \exp(2\pi i(\xi
  z_1 + \xi' z_2)),
  \]
be its Fourier expansion at a cusp. Then the form $G$ determines a section of
$\Om_{\specY_{\Gamma ((n),\id{a})},R}(k(\bfK_{R}+\Sc'_{R}))$ if and
only if $a_{\xi}\in R$ for all $\xi\in M^{\vee}_+\cup\{0\}$.
\end{thm}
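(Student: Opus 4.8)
The plan is to deduce Theorem~\ref{thm:5} from the classical $q$-expansion principle over the integral model, applied componentwise at the boundary. First I would recall that a parallel weight $2k$ Hilbert modular form $G$ for $\Gamma((n),\id{a})$ determines, via the identification preceding Proposition~\ref{prop:1}, a global section over $\CC$ of $\Om_{Y_{\Gamma((n),\id{a})}}(k(K+S'))$, and that by Theorem~\ref{thm:3} this invertible sheaf on $Y_{\Gamma((n),\id{a})}$ extends canonically to the invertible sheaf $\Om_{\specY_{\Gamma((n),\id{a})}}(k(\bfK+\Sc'))$ on the smooth proper model over $\ZZ[1/(Dn),\zeta_n]$ (the relative canonical sheaf twisted by the boundary divisor). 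Since $\specY_{\Gamma((n),\id{a})}$ is regular and the generic fibre is geometrically connected, a section over $R$ of $\Om_{\specY_{\Gamma((n),\id{a})},R}(k(\bfK_R+\Sc'_R))$ is the same thing as a section over $\CC$ that is ``defined over $R$'', so the content of the theorem is precisely to translate the condition ``defined over $R$'' into the arithmetic condition on Fourier coefficients.

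The key geometric input is the structure of the formal/analytic neighbourhood of the boundary of the integral model. Near the infinity cusp (and all cusps are handled the same way after conjugating, possibly changing ideals), the toroidal construction of Section~\ref{sec:cusp-resolution} is already defined over $\ZZ[1/(Dn),\zeta_n]$: for each totally positive basis $A_j,A_{j+1}$ of $M=\id{a}^{-1}(n)$ one has a local chart with coordinates $u,v$ in which, as computed in the proof of Lemma~\ref{lemm:1},
\[
G(u,v)=\sum_{\xi\in M^\vee_+\cup\{0\}} a_\xi\, u^{\Tr(\xi A_j)}\, v^{\Tr(\xi A_{j+1})},
\]
and the chosen trivialization of $k(\bfK+\Sc')$ (coming from $dz_1\wedge dz_2$, which up to a unit is $\tfrac{du}{u}\wedge\tfrac{dv}{v}$ times a constant depending on $A_j,A_{j+1}$) identifies $G$ with this power series. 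The standard $q$-expansion principle — in the form of \cite[Corollary 1.6.2]{Katz} for the analogous elliptic modular situation, and its Hilbert modular analogue via \cite{Chai,Rapoport,Pappas} and the book \cite{Go} — then says that the section $G$ extends to an $R$-section on the formal completion along $\Sc'_R$ precisely when all these coefficients lie in $R$. Finally one uses that $\specY_{\Gamma((n),\id{a})}$ is proper over $R$ with geometrically connected fibres and that the boundary $\Sc'$ is an ample-enough (in fact the relevant vanishing holds because $\Om(k(\bfK+\Sc'))$ is generated near the boundary and the model is normal) to promote an $R$-rational section on the formal neighbourhood of the boundary to a global $R$-section; concretely, the difference between $G$ and its base change is a section of the sheaf over $\CC$ that vanishes on the formal neighbourhood of $\Sc'_R$, hence is $R$-rational after multiplying by a suitable power of $n$ only if it already was, so by flatness and the density of the boundary chart data it is $R$-rational.

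The main obstacle I anticipate is bookkeeping the trivializations of the relative canonical bundle at the boundary so that the ``integral'' structure on sections of $\Om(k(\bfK+\Sc'))$ matches the naive integrality of Fourier coefficients on the nose, rather than up to a fixed unit or a power of $n$ — this is exactly why one inverts $Dn$ and adjoins $\zeta_n$, and one must check that the change-of-basis matrices in $\GL_2(\ZZ)$ between different cones $\sigma_j$ act on the coordinates $(u,v)\mapsto(u^av^b,u^cv^d)$ by transformations defined over $\ZZ$, so that integrality is independent of the chart. Once that is pinned down, the argument is a direct citation of the $q$-expansion principle in the references, applied one cusp at a time, together with the fact that the model is connected so that vanishing of all Fourier coefficients at one cusp forces $G$ to be defined over $R$ globally; the ``only if'' direction is immediate since the Fourier coefficients are recovered from $G$ by the above local expansion with $\ZZ[1/(Dn),\zeta_n]$-coefficients.
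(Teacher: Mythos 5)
The paper's own ``proof'' of Theorem~\ref{thm:5} is a two-line citation of \cite{Chai} Theorem 4.3 and \cite{Rapoport} Th\'eor\`eme 6.7: the statement \emph{is} the Hilbert modular $q$-expansion principle over the integral model, and nothing is reproved. Your proposal instead tries to unpack that principle. The first half of what you write is a correct account of the easy part: the toroidal boundary charts are defined over $\ZZ[1/(Dn),\zeta_n]$, in the coordinates $u,v$ attached to a basis $A_j,A_{j+1}$ of $M$ the section is the power series $\sum a_\xi u^{\Tr(\xi A_j)}v^{\Tr(\xi A_{j+1})}$ (as in Lemma~\ref{lemm:1}), and the change-of-cone maps lie in $\GL_2(\ZZ)$, so integrality of the coefficients is chart-independent. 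This shows that ``$a_\xi\in R$ for all $\xi$'' is equivalent to ``the restriction of $G$ to the formal completion along $\Sc'_R$ is defined over $R$'', and in particular it settles the ``only if'' direction.

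The gap is in your last step, which is where the entire content of the $q$-expansion principle sits: passing from $R$-integrality on the formal neighbourhood of the boundary to $R$-integrality of the global section. Your argument there (``the difference between $G$ and its base change \dots is $R$-rational after multiplying by a suitable power of $n$ only if it already was'') is circular, and the appeal to ampleness and normality of the model does not by itself produce any descent statement. The mechanism actually proved in the cited references is that the $q$-expansion map $H^0\bigl(\specY_{\Gamma((n),\id{a}),R},\Om(k(\bfK_R+\Sc'_R))\bigr)\to\prod_\xi R$ is injective \emph{with cokernel flat over $R$}: injectivity uses the geometric irreducibility of the fibres (a section vanishing on a formal neighbourhood of a boundary component vanishes on the whole fibre), and flatness of the cokernel is precisely what lets one conclude that a section defined over a larger ring whose coefficients lie in $R$ already comes from $H^0$ over $R$. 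Without that flatness/base-change input your sketch does not close; with it the theorem is immediate, which is why the paper simply cites \cite{Chai} and \cite{Rapoport}.
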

\begin{proof}
  See \cite{Chai} Theorem 4.3 and \cite{Rapoport} Th\'eor\`eme 6.7.
\end{proof}

Finally we need to know that the fibers of $\specY_{\Gamma
  ((n),\id{a})}$ are also minimal surfaces. 

\begin{prop}\label{prop:3} Let $\Om$ be a Dedekind domain contained in
  $\CC$ that 
  contains $\ZZ[1/(Dn),\zeta_{n}]$. Let $\id{p}\subset \Om$ be a prime
  ideal and let $\overline{k(\id{p})}$ be an algebraic closure of the residue
  field $k(\id{p})$. Denote $\specY_{\Gamma
    ((n),\id{a}),\overline{k(\id{p})}}=\specY_{\Gamma ((n),\id{a})}
  \underset {\ZZ[1/(Dn),\zeta_{n}]}{\times \overline{k(\id{p})}}$. If
  $Y_{\Gamma ((n),\id{a})}$ is a minimal
  surface of general type then the same is true for $\specY_{\Gamma
    ((n),\id{a}),\overline{k(\id{p})}}$.
\end{prop}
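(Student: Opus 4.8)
The plan is to reduce the statement to well known facts about how minimality behaves in smooth proper families, using the results already collected in Theorem~\ref{thm:3}. Since $\specY_{\Gamma((n),\id{a})}$ is smooth and proper over $\ZZ[1/(Dn),\zeta_{n}]$, and $\Om$ contains $\ZZ[1/(Dn),\zeta_{n}]$ with $\id{p}$ a prime of $\Om$, the base change $\specY_{\Gamma((n),\id{a}),\overline{k(\id{p})}}$ is a smooth proper surface over the algebraically closed field $\overline{k(\id{p})}$; in particular it is itself a smooth projective surface, so ``minimal'' makes sense for it. First I would recall that the generic fibre is $Y_{\Gamma((n),\id{a})}$, which by hypothesis is a minimal surface of general type, and that minimality of a smooth projective surface is equivalent to the canonical divisor being \emph{nef} (this is exactly how minimality of general type surfaces is used throughout Section~\ref{sec-Hecke}).

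The key step is a constructibility/specialization argument for nefness of the relative canonical divisor in the family $\specY_{\Gamma((n),\id{a})}\to \Spec\ZZ[1/(Dn),\zeta_{n}]$. The relative canonical divisor $\bfK$ restricts on each geometric fibre to the canonical divisor of that fibre. I would argue that the locus of points $t$ in the base such that the fibre $\specY_{\Gamma((n),\id{a}),\overline{k(t)}}$ has nef canonical class is constructible and closed under generization, because the existence of a curve with negative intersection against $K$ of bounded degree is an open condition, and by adjunction any curve violating nefness on a surface of fixed numerical type (fixed $K^{2}$, fixed Betti numbers — all locally constant in the smooth proper family) is a $(-1)$-curve, hence has bounded degree against a fixed relatively ample class. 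Concretely: in a smooth proper family all intersection numbers on fibres, as well as $\chi(\Om)$ and the signature, are locally constant; a non-minimal surface of general type has a $(-1)$-curve $E$ with $K\cdot E=-1$, and for fixed polarization $H$ one has $H\cdot E$ bounded by Hodge index; the Hilbert scheme parametrizing such curves over the base is of finite type, and the subscheme where $E^{2}=K\cdot E=-1$ is closed, so its image in the base is closed. Since it misses the generic point (the generic fibre is minimal), it misses an open set; shrinking if necessary one concludes that \emph{every} fibre is minimal. Alternatively, and perhaps more cleanly, I would invoke that a smooth family of surfaces of general type over a connected base has constant minimal model type because the canonical ring is formed fibrewise compatibly (e.g. via deformation invariance of plurigenera and of $K_{X/S}$ being nef, cf.\ standard results in the MMP literature or Kollár--Mori); since one fibre is its own minimal model, so are all of them.

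The main obstacle I anticipate is making the above rigorous without assuming the base is a field: one must be careful that ``smooth proper over a Dedekind base'' gives genuine flatness and that the arithmetic genus, $K^{2}$, and Euler characteristic are the same for the generic and special fibres — which follows from flatness and properness (semicontinuity plus $\chi$ being locally constant for a flat family) — and then that a surface of general type over $\overline{k(\id{p})}$ with the same numerical invariants as $Y_{\Gamma((n),\id{a})}$, together with the fact that it is a deformation of the latter, cannot acquire a $(-1)$-curve. The cleanest route is: (i) note the smooth proper family has locally constant Betti numbers and Hodge numbers (smooth base change / Ehresmann in the complex-analytic neighbourhood, or smooth proper base change for $\ell$-adic cohomology in general), so $K^{2}$ and $\chi(\Om)$ agree on all fibres; (ii) note that $\bfK_{\specY/\ZZ[1/(Dn),\zeta_{n}]}$ is a line bundle whose restriction to each geometric fibre is that fibre's canonical bundle; (iii) if some geometric special fibre had a $(-1)$-curve, it would be a surface of general type which is not minimal, yet its invariants force, via Noether's formula and the classification, a contradiction with being a specialization of the minimal surface $Y_{\Gamma((n),\id{a})}$ — more precisely the minimal model $X'$ of the special fibre would have $K_{X'}^{2}>K^{2}$, but $K^{2}$ is constant in the family, giving the contradiction once we know the special fibre, being of general type, has $K_{X'}^{2}\geq 1$ and each blow-down strictly increases $K^{2}$. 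This finishes the proof: every geometric fibre of $\specY_{\Gamma((n),\id{a})}$ is a minimal surface of general type, in particular $\specY_{\Gamma((n),\id{a}),\overline{k(\id{p})}}$ is.
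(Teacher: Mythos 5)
Your reduction to a statement about smooth proper families is the right framing (the paper itself disposes of this proposition by citing \cite{Ueno}, Theorem 9.1 and Lemma 9.6, rather than proving it), and the ``general type'' half is fine once you replace ``deformation invariance of plurigenera'' --- a characteristic-zero theorem that you cannot invoke in mixed characteristic --- by upper semicontinuity of $h^{0}(\omega^{\otimes m})$ in the flat proper family, which gives $P_{m}$ of the special fibre $\ge P_{m}$ of the generic fibre and hence Kodaira dimension $2$. The minimality half, however, has a genuine gap, in two places. First, your Hilbert-scheme argument only shows that the locus of primes over which the geometric fibre acquires a $(-1)$-curve has \emph{closed} image in $\Spec \ZZ[1/(Dn),\zeta_{n}]$ missing the generic point; ``shrinking if necessary'' then proves minimality of the fibre over all primes \emph{outside some finite set}, whereas the proposition asserts it for the \emph{given} prime $\id{p}$, which could perfectly well lie in that closed subset. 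What is missing is the converse inclusion: if the geometric fibre at $\id{p}$ contains a $(-1)$-curve $E$, then $E$ lifts. This follows from the exact sequence $0\to \mathcal{O}_{\PP^{1}}(-1)\to N_{E/\specY}\to \mathcal{O}_{\PP^{1}}\to 0$, which gives $H^{1}(E,N_{E/\specY})=0$, so the Hilbert scheme is smooth over the (henselized) base at $[E]$ and, being proper, dominates it; the resulting curve on the generic fibre has $E^{2}=K\cdot E=-1$ and $p_a=0$ and contradicts minimality of $Y_{\Gamma((n),\id{a})}$. Without this unobstructedness step your argument does not touch the prime in question.

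Second, the numerical ``contradiction'' you offer as an alternative is vacuous. Constancy of $K^{2}$ in the smooth proper family says that $K^{2}$ of the special fibre equals $K^{2}$ of $Y_{\Gamma((n),\id{a})}$; it says nothing about the $K^{2}$ of the \emph{minimal model} $X'$ of the special fibre. A non-minimal special fibre would simply be a blow-up of some $X'$ with $K_{X'}^{2}>K^{2}$, and no invariant you have listed rules this out (note also that Bogomolov--Miyaoka--Yau, which might otherwise bound $K_{X'}^{2}$, fails in positive characteristic). Relatedly, your claim that the nef locus of $\bfK$ is ``closed under generization'' is the easy and irrelevant direction: nefness on a special fibre implies nefness on the generic fibre, but the implication you need --- nef on the generic fibre implies nef on special fibres --- is false for general line bundles in families, and holds here only because the failure of nefness of $K$ on a surface of general type is witnessed by a $(-1)$-curve, bringing you back to the deformation argument above.
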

\begin{proof}
  This follows from \cite{Ueno} Theorem 9.1 and Lemma 9.6. We would
  like to thank Qing Liu by 
pointing us this result via mathoverflow.
\end{proof}
 
We now assume that $D$ and $n$ satisfy furthermore the hypothesis of
the  previous section and we
use the notations of that section. Again, for simplicity we state the
result for the infinity cusp. 

\begin{thm}[Sturm bound] \label{thm:6}
Let $\Om\subset \CC$ be
a ring of fractions of the ring of integers of a number field.
Let $G$ be a Hilbert modular form of parallel weight $2k$ for
$\Gamma(\Om_{K},\id{a})$, which vanishes with order $s$ at all
cusps. Suppose that the Fourier expansion of $G$ at the infinity cusp
$c_{1}$ is 
\[ 
G= \sum_{\xi \in (\id{a})^\vee_+\cup\{0\}} a_\xi \exp(\xi
z_1 + \xi' z_2), 
\]
with $a_{\xi}\in \Om$ for all $\xi \in M^\vee_+\cup\{0\}$.
Let $\id{p}\subset \Om$ be a prime ideal such that $\id{p} \nmid Dn$
and let $a$ be an integer with
\[
a > \frac{4kn \zeta_K(-1)}{\sum_j (b_{1,j}-2)}-s
\left(\frac{\sum_{i=1}^h \sum_j
    (b_{i,j}-2)}{\sum_j(b_{1,j}-2)}\right).
\] 

If $a_{\xi}\in \id{p}$ for all $\xi\in (\id{a})^\vee_+\cup\{0\}$ such
that there is a $j\in J$ with $\Tr(\xi A_{j})< a+s$, then $a_\xi
\in\id{p}$ for all $\xi\in M^\vee_+\cup\{0\}$.  
\end{thm}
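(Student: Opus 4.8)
The plan is to run the geometric/reduction-mod-$\id{p}$ argument from Section~\ref{sec-Sturm}'s preliminaries in exact parallel with the proof of Theorem~\ref{thm:1}, replacing ``$K_{Z_n}$ is NEF on the complex surface'' by ``$\bfK$ restricted to a special fibre is NEF''. First, by the reduction steps in the classical Sturm argument of Section~1, I would enlarge $\Om$: set $p=\id{p}\cap\ZZ$; since $\id{p}\nmid Dn$ we have $p\nmid Dn$, and after adjoining $\zeta_n$ to the fraction field and choosing a prime above $\id{p}$ we may assume $\zeta_n\in\Om$ and $\ZZ[1/(Dn),\zeta_n]\subset\Om$, so that Theorems~\ref{thm:3}, \ref{thm:5} and Proposition~\ref{prop:3} all apply with this $\Om$. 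Passing to a prime over $\id{p}$ only helps, since lying in the smaller prime implies lying in $\id{p}$.

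Next I would set up the integral model. Let $\specY=\specY_{\Gamma((n),\id{a}),\Om}$, with relative canonical divisor $\bfK$ and relative normal crossing cusp divisor $\Sc'$ as in Theorem~\ref{thm:3}; note $\specY$ here coincides with the integral model of $Z_n=Z_{\Gamma((n),\id a)}$ because $Z_n$ has no elliptic points (Remark~\ref{rem:ellipticpoints}). By hypothesis $a_\xi\in\Om$ for all $\xi$, so by Theorem~\ref{thm:5} the form $G$ determines a section of $\Om_{\specY}(k(\bfK+\Sc'))$; since $G$ vanishes to order $s$ at every cusp and the vanishing hypothesis on the $a_\xi$ with some $\Tr(\xi A_j)<a+s$ says (via Lemma~\ref{lemm:1}, applied over $\Om$) that this section is divisible by $\Sc'^{\,s-?}$ appropriately and by $\Sc'_1$ to extra order $a$ at the infinity cusp. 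Concretely, $G$ gives a global section of $\Om_{\specY}(k(\bfK+\Sc')-sn\Sc'-an\Sc'_1)$, using $\pi^*(S)=nS'$ etc. as in~\eqref{eq:1}. Now suppose, for contradiction, that not all $a_\xi$ lie in $\id{p}$. Then the reduction $\bar G$ of this section modulo $\id{p}$ is a nonzero global section of the corresponding line bundle on the special fibre $\specY_{\overline{k(\id p)}}$.

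The key step is then: by Proposition~\ref{prop:3}, $\specY_{\overline{k(\id p)}}$ is again a minimal surface of general type, so its canonical divisor $\bfK_{\overline{k(\id p)}}$ is NEF; and since $\specY$ is smooth and proper over $\Om$, the restriction of $\bfK$ (resp.\ $\Sc'$, $\Sc'_1$) to the special fibre agrees with $\bfK_{\overline{k(\id p)}}$ (resp.\ with the cusp resolution divisors of the special fibre, whose intersection matrix is the same as in characteristic zero because the resolution divisor configuration is locally toric and constant in the fibres). Hence the intersection numbers~\eqref{eq:8} and~\eqref{eq:6} hold verbatim on the special fibre. Intersecting the effective divisor class of $\bar G$ with the NEF divisor $\bfK_{\overline{k(\id p)}}$ must give a nonnegative number, but the very computation~\eqref{eq:7} shows
\[
\bfK_{\overline{k(\id p)}}\cdot\bigl(k(\bfK_{\overline{k(\id p)}}+\Sc')-sn\Sc'-an\Sc'_1\bigr)
= d\left(4k\zeta_K(-1)+\tfrac{s}{n}\sum_{i,j}(2-b_{i,j})+\tfrac{a}{n}\sum_j(2-b_{1,j})\right)<0
\]
by the hypothesis on $a$. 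This contradiction forces $\bar G=0$, i.e. $a_\xi\in\id p$ for all $\xi$, proving the theorem; and then translating back through Lemma~\ref{lemm:1} gives the congruence for every Fourier coefficient once it holds for the finitely many $\xi$ with some $\Tr(\xi A_j)<a+s$.

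I expect the main obstacle to be the last geometric point: verifying that reduction mod $\id{p}$ commutes with everything in sight --- that $\Sc'$ really restricts to the cusp resolution of the special fibre with unchanged self-intersections, that $\bfK$ restricts to the canonical divisor of the (smooth proper) special fibre, and that the identification of $G$ with a section of the relevant sheaf is compatible with base change to $\overline{k(\id p)}$ --- so that the numerical computation of Theorem~\ref{thm:1} can be transported mod $\id p$. The inputs for this are exactly Theorems~\ref{thm:3}, \ref{thm:5} and Proposition~\ref{prop:3}, together with smooth-proper base change for the canonical sheaf; once these are in hand the proof is a line-by-line rerun of the proof of Theorem~\ref{thm:1} over the field $\overline{k(\id p)}$ in place of $\CC$.
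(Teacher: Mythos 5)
Your proposal is correct and follows essentially the same route as the paper: enlarge $\Om$ so that $\ZZ[1/(Dn),\zeta_n]\subset\Om$, use the regular model of Theorem~\ref{thm:3} and the $q$-expansion principle of Theorem~\ref{thm:5} to view $G$ as a section of $\Om_{\specY}(k(\bfK+\Sc')-sn\Sc'-an\Sc'_1)$ whose restriction to the special fibre is controlled by the congruence hypothesis, then invoke Proposition~\ref{prop:3} for NEF-ness of $\bfK_{\overline{k(\id p)}}$ and preservation of intersection numbers under specialization to rerun the computation~\eqref{eq:7}. The only cosmetic difference is that you phrase the conclusion as a contradiction while the paper directly deduces that the restriction of $G$ to $\specY_{\Gamma((n),\id{a}),\overline{k(\id p)}}$ vanishes.
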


\begin{proof}
  With the same argument as in the proof of the classical Sturm theorem, we
  can assume without loss of generality that
  $\ZZ[1/(Dn),\zeta_{n}]\subset \Om$. We consider the regular model
  $\specY_{\Gamma
    ((n),\id{a})}$  of $Y_{\Gamma
    ((n),\id{a})}$ provided by Theorem \ref{thm:3}. As before, we
  denote by $\specY_{\Gamma
    ((n),\id{a}),\Om}$ the model over $\Spec(\Om)$ obtained after base
  change.  
  Since $G$ is a modular form for $\Gamma(\Om_{K},\id{a})$ it is also
  a modular form for $\Gamma((n),\id{a})$.
By the $q$-expansion principle
  (Theorem \ref{thm:5}) the modular form $G$ determines a section of 
$\Om_{\specY_{\Gamma ((n),\id{a}),\Om}}(k(\bfK_{\Om}+\Sc'_{\Om}))$,
that we denote also by $G$. The
vanishing hypothesis imply that, when we restrict $G$
to $\specY_{\Gamma ((n),\id{a}),\overline {k(\id{p})}}$ we
obtain a global section of
\begin{displaymath}
  \Om_{\specY_{\Gamma
      ((n),\id{a}),\overline{k(\id{p})}}}(k(\bfK_{\overline{k(\id{p})}}
  +\Sc'_{\overline{k(\id{p})}})-sn\Sc'_{\overline{k(\id{p})}}
  -an\Sc'_{i_{0},\overline{k(\id{p})}}). 
\end{displaymath}
By Proposition \ref{prop:3} the canonical divisor
$\bfK_{\overline{k(\id{p})}}$ is NEF. Since intersection numbers are
preserved by specialization, from equation \eqref{eq:7} we deduce that
\begin{displaymath}
  \bfK_{\overline{k(\id{p})}}\cdot(k(\bfK_{\overline{k(\id{p})}}
  +\Sc'_{\overline{k(\id{p})}})-sn\Sc'_{\overline{k(\id{p})}}
  -an\Sc'_{i_{0},\overline{k(\id{p})}})<0
\end{displaymath}
Therefore the  restriction of $G$ to $\specY_{\Gamma
  ((n),\id{a}),\overline {k(\id{p})}}$ is zero, proving the result.
\end{proof}

\section{General weights and levels.}

Although the main results of the previous sections are stated only for
modular forms of level $\Gamma(\Om_K,\id{a})$ and parallel weight
$(2k,2k)$, they can be generalized to any congruence subgroup $\Gamma_{\id{a}}$
and any weight $(k_1,k_2)$ satisfying the parity condition $k_1 \equiv k_2
\pmod{2}$ using exactly the same tricks as for classical modular
forms. Assume that $n$ satisfies the hypothesis of
Theorem~\ref{thm:1}.

  Let $\Gamma_{\id{a}}$ be a congruence subgroup,
  $(k_1,k_2)$ a weight satisfying the previous parity condition.
  Let $\{A_{j}\}_{j\in J}$ be a set of representatives under the action
  of $U^{2}_{\Om_K}$, of the corners of
  the convex hull of $(\id{a}^{-1})_{+}$ as in Section
  \ref{sec:cusp-resolution}. 

\begin{thm}
  Let $G$ be a modular form of weight $(k_1,k_2)$ for $\Gamma_{\id{a}}$ which
  vanishes with order $s$ at all the cusps. Suppose that the Fourier
  expansion of $G$ at the infinity cusp is
\[
  G= \sum_{\xi \in M^{\vee}_+ \cup\{0\}} a_\xi \exp(2\pi i(\xi
z_1 + \xi' z_2)).
\]
for an appropriate lattice $M\subset \id{a}^{-1}$.
Let
\[
a > \frac{(k_1+k_2)n[\Gamma(\Om_K,\id{a}):\Gamma_{\id{a}}] \zeta_K(-1)}{\sum_j
  (b_{1,j}-2)}-s
\left(\frac{\sum_{i=1}^h \sum_j
    (b_{i,j}-2)}{\sum_j(b_{1,j}-2)}\right)
\] 
be an integer. 
If $a_{\xi}=0$ for all $\xi\in M^{\vee}_+ \cup\{0\}$ such that there
is a $j\in J$ with $\Tr(\xi A_{j})< a+s$, then $G=0$.
\end{thm}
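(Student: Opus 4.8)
The plan is to reduce the general case to the already-proved parallel weight $2$ case (Theorem~\ref{thm:1}, or rather its reformulation in Corollary~\ref{coro:mainHecke}) by the same two classical tricks used in Section~1 to pass from $\SL_2(\ZZ)$ to an arbitrary congruence subgroup and from weight $2$ to higher weight: first average over cosets to pass from $\Gamma_{\id{a}}$ to the maximal level $\Gamma(\Om_K,\id{a})$, and then multiply suitable translates to get a parallel weight form from a form of arbitrary weight $(k_1,k_2)$.

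First I would handle the level. Any $\gamma\in\PGL_2^+(K)$ normalizing the Hilbert modular setup acts on modular forms by the usual weight-$(k_1,k_2)$ slash operator $G\mapsto G|_{(k_1,k_2)}[\gamma]$, and elements of the image of $\Gamma_{\id{a}}$ act trivially on $M_{(k_1,k_2)}(\Gamma_{\id{a}})$. Writing $d_0=[\Gamma(\Om_K,\id{a}):\Gamma_{\id{a}}]$ and letting $\gamma$ run over a set of coset representatives of $\Gamma_{\id{a}}\backslash\Gamma(\Om_K,\id{a})$, the product
\[
G_1=\prod_{\gamma}G|_{(k_1,k_2)}[\gamma]
\]
is a modular form for $\Gamma(\Om_K,\id{a})$ of weight $(d_0 k_1, d_0 k_2)$, and one of the factors is $G$ itself, so if $G_1=0$ then $G=0$. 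Moreover $\ord_{c}G_1\ge\ord_cG$ at every cusp: each slash operator permutes the cusps and preserves (or only increases) vanishing orders, and the order of a product is the sum of the orders. The only subtlety here is bookkeeping: one must check that the slash action is compatible with the normalization of Fourier expansions at each cusp chosen in Section~3, and that the vanishing-order statement of Lemma~\ref{lemm:1} transports correctly; this is routine but is where care is needed.

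Next I would kill the non-parallel part of the weight. Set $k_1'=k_1, k_2'=k_2$ after the first step, so we now have a weight $(d_0k_1, d_0k_2)$ form $G_1$ for $\Gamma(\Om_K,\id{a})$. Applying the Galois conjugation $\tau$ (swapping the two factors of $\Siegel^2$) carries $G_1$ to a form $G_1^\tau$ of weight $(d_0k_2, d_0k_1)$ for the conjugate group; since $\Gamma(\Om_K,\id{a})$ is stable under this operation (possibly after adjusting $\id{a}$ to $\id{a}'$, which one must track), the product $G_1\cdot G_1^\tau$ has parallel weight $(d_0(k_1+k_2), d_0(k_1+k_2))$, i.e.\ parallel weight $2\tilde k$ with $\tilde k=\tfrac{d_0(k_1+k_2)}{2}$ — an integer by the parity hypothesis $k_1\equiv k_2\pmod 2$. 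Again the vanishing order at each cusp of the product is at least that of $G_1$, hence at least $\ord_cG$. Now apply Corollary~\ref{coro:mainHecke} to this parallel weight $2\tilde k$ form with the given $s$: its hypothesis is that $a>\dfrac{4\tilde k n\,\zeta_K(-1)}{\sum_j(b_{1,j}-2)}-s\bigl(\tfrac{\sum_i\sum_j(b_{i,j}-2)}{\sum_j(b_{1,j}-2)}\bigr)$. Substituting $4\tilde k=2d_0(k_1+k_2)$ would give a bound with $2d_0(k_1+k_2)$ in the numerator, which is twice as large as the claimed $d_0(k_1+k_2)$; so to get the sharper stated constant one should instead average only over $\Gamma_{\id{a}}\backslash\Gamma(\Om_K,\id{a})$ without the extra conjugation factor, or equivalently observe that geometrically $G_1$ already gives a section of $\Om(\tfrac{d_0}{2}(k_1+k_2)(K_{Z_n}+S') - \cdots)$ because $dz_1\wedge dz_2$ contributes weight $(1,1)$ per the identification in Section~3 — the key point being that a weight $(k_1,k_2)$ form corresponds to a section involving $K_{Z_n}$ with coefficient $\tfrac{k_1+k_2}{2}$, not $k_1+k_2$.

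Thus the cleanest route, which I would actually write, is to repeat the intersection-theoretic argument of Theorem~\ref{thm:1} directly for $G_1$: the form $\omega_{G_1}=G_1\,(dz_1)^{\otimes k_1}(dz_2)^{\otimes k_2}$ (suitably interpreted) determines, after pullback to the minimal surface of general type $Z_n=Z_{\Gamma((n),\id{a})}$ via the degree-$d$ map $\pi$, a global section of $\Om\bigl(\tfrac{d_0(k_1+k_2)}{2}(K_{Z_n}+S') - snS' - anS'_{i_0}\bigr)$, using that $K_{Z_n}+S'$ is the relevant log-canonical class and that conjugate-symmetrizing the weight costs nothing on the parallel locus. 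Pairing with the NEF divisor $K_{Z_n}$ and using \eqref{eq:8} and \eqref{eq:6} exactly as in \eqref{eq:7}, non-vanishing of $G_1$ forces
\[
d\Bigl(d_0(k_1+k_2)k'\zeta_K(-1)+\tfrac{s}{n}\textstyle\sum_{i,j}(2-b_{i,j})+\tfrac{a}{n}\sum_j(2-b_{i_0,j})\Bigr)\ge 0,
\]
and the stated lower bound on $a$ (specialized to $i_0=1$, the infinity cusp) makes this quantity negative; hence $G_1=0$, hence $G=0$, and Lemma~\ref{lemm:1} translates the vanishing-order hypothesis into the stated Fourier-coefficient condition. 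The main obstacle is not any deep input — it is entirely the careful verification that the slash action and Galois conjugation interact correctly with the ideal $\id{a}$, the cusp normalizations, and the divisor identification $M_{(k_1,k_2)}\leftrightarrow H^0(\Om(\cdots))$, so that the numerator comes out as $d_0(k_1+k_2)$ and not $2d_0(k_1+k_2)$.
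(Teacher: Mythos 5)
Your overall strategy is the same as the paper's: reduce the level by taking the product over coset representatives of $\Gamma_{\id{a}}\backslash\Gamma(\Om_K,\id{a})$, and reduce to parallel weight by multiplying by the swapped form. But there is a genuine error at the weight-symmetrization step, and it leads you to abandon the correct route for an unjustified one. When you apply Corollary~\ref{coro:mainHecke} to $G_1\cdot G_1^{\tau}$ (in the paper's notation, to $G(z_1,z_2)G(z_2,z_1)$ after level reduction), it is not only the weight that doubles: the vanishing orders double as well. The product vanishes with order $2s$ at every cusp and order $2(a+s)$ at the infinity cusp, so the hypothesis of the corollary you need to verify is $2a>\frac{4\tilde k n\zeta_K(-1)}{\sum_j(b_{1,j}-2)}-2s(\cdots)$ with $4\tilde k=2d_0(k_1+k_2)$, which upon dividing by $2$ is exactly the stated bound $a>\frac{d_0(k_1+k_2)n\zeta_K(-1)}{\sum_j(b_{1,j}-2)}-s(\cdots)$. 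There is no loss of a factor of $2$; you forgot to double $a$ and $s$ on the left-hand side. This is precisely how the paper concludes.

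The workaround you propose instead --- interpreting $G_1(dz_1)^{\otimes k_1}(dz_2)^{\otimes k_2}$ directly as a section of $\Om\bigl(\tfrac{k_1+k_2}{2}(K_{Z_n}+S')-\cdots\bigr)$ --- is not valid as stated when $k_1\neq k_2$: the automorphy factor of a weight $(k_1,k_2)$ form corresponds to the line bundle $\mathcal L_1^{\otimes k_1}\otimes\mathcal L_2^{\otimes k_2}$, whose two factors satisfy $\mathcal L_1\otimes\mathcal L_2\simeq\Om(K+S)$ but are not individually half of the log-canonical bundle; the discrepancy $(dz_1/dz_2)^{\otimes(k_1-k_2)/2}$ is a section of a nontrivial bundle, so the divisor computation in \eqref{eq:7} does not apply to $G_1$ alone. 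Symmetrizing by the swap is exactly what kills this discrepancy, and (as above) it costs nothing in the final constant. A secondary point you gloss over: the Fourier-coefficient hypothesis is stated with respect to the corners $A_j$ of the convex hull of $(\id{a}^{-1})_+$, while $G$ is expanded over the larger dual lattice $M^{\vee}\supseteq(\id{a}^{-1})^{\vee}$; passing from the hypothesis on the coefficients of $G$ to the vanishing order of the product form requires the convolution argument with the inequality $\Tr(\xi m)\le\Tr(\eta m)$ for $\eta-\xi\gg 0$, which the paper carries out and your sketch does not.
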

\begin{proof}
Assume first that $k_1=k_2=2k$. Let $H(z_1,z_2)$ be the Hilbert
modular form given by
\[
H(z_1,z_2) = \prod_{\substack{\alpha \in \Gamma\backslash \Gamma(\Om_K,\id{a})\\ \alpha \not \in \Gamma}}G(z_1,z_2)|_{2k}[\alpha],
\]
where the product is taken over coset representatives of
$\Gamma(\Om_K,\id{a})$ modulo $\Gamma_{\id{a}}$ (acting on the left) not in the
trivial class. 

The form $G(z_1,z_2)H(z_1,z_2)$ is a form of weight
$2k[\Gamma(\Om_K,\id{a}):\Gamma_{\id{a}}]$ for $\Gamma(\Om_K,\id{a})$,
so we can apply the Hecke bound of section \ref{sec-Hecke} to
it. There is an integer $N$ such that $\Gamma ((N),\id{a})\subset
\Gamma _{\id{a}}$ and $N(\id{a}^{-1})\subset M$, thus
we can write  
the Fourier expansion of
$G$ as
\[
G(z_1,z_2)=\sum_{\xi \in
  \frac{1}{N}(\id{a}^{-1})^\vee_+\cup\{0\}}a_\xi\exp(2 \pi i (\xi
z_1+\xi'z_2)).
\]
Since $\Gamma((N),\id{a})$ is a normal subgroup of
$\Gamma(\Om_K,\id{a})$, the function $H(z_1,z_2)$ is a modular form
for it. Thus it has a Fourier expansion
\[
H(z_1,z_2)=\sum_{\xi \in \frac{1}{N}(\id{a}^{-1})^\vee_+\cup\{0\}}b_\xi\exp(2 \pi i (\xi z_1+\xi'z_2)).
\]
The product of this two Fourier expansions is
\[
\sum_{\eta \in (\id{a}^{-1})^\vee_+}\left(\sum_{\xi,\eta-\xi \in \frac{1}{N}(\id{a}^{-1})^\vee_+\cup\{0\}}a_\xi b_{\eta-\xi}\right)\exp(2 \pi i (\eta z_1+\eta'z_2)).
\]
In principle, the exterior sum should run over elements in
$\frac{1}{N}(\id{a})^\vee_+$, but since we know that $GH$ is a modular 
form for $\Gamma(\Om_K,\id{a})$, all the other terms are zero.

Note that since $\eta-\xi \gg 0$ (or zero), $\eta -\xi \ge 0$ and
$\eta'-\xi' \ge 0$, so $\Tr(\xi m) \le \Tr(\eta m)$ for $m \in
\id{a}^{-1}_+$. In particular, if $a_\xi =0$ for all the elements in
the hypothesis, the coefficients of $G(z_1,z_2)H(z_1,z_2)$ are all zero for all
$\eta$ with $\Tr(\eta A_{j})\le a+s$ for some $j\in J$ and the result
follows from Corollary \ref{coro:mainHecke}.

For general weights $(k_1,k_2)$, it is enough to apply the previous
case to the form $G(z_1,z_2)
G(z_2,z_1)$, which has parallel weight $k_1+k_2$ (even) and vanishes
with order $2s$ at all the cusps and with order $2a+2s$ at the
infinity cusp.
\end{proof}

\begin{remark}
  A similar Sturm bound holds for general weights and level, we leave
  it as an exercise.
\end{remark}

\begin{remark}
  As in the classical case, one can obtain for forms in
  $\Gamma_0(\id{c},\id{a},\chi)$ (i.e. forms with a character) the
  same bound as the one for the subgroup $\Gamma_0(\id{c},\id{a})$, by
  using Buzzard's trick. If $\ord(\chi)$ denotes the order of $\chi$, then we
  consider $G(z_1,z_2)^{\ord(\chi)}$, which vanishes with order $\ord(\chi)s$ at all cusps
  and $\ord(\chi)s+\ord(\chi)a$ at the infinity cusp, but is a form for $\Gamma_0(\id{c},\id{a})$, so
  the values of $\ord(\chi)$ cancels in the formula.
\end{remark}

\begin{remark}
  If in the Hecke/Sturm bound we fix the level and let the weight
  grow, the number of elements of the  Fourier expansion to check
  equality/congruence grows quadratically with the weight since we
  have to search for elements in a cone whose trace grows linearly in
  the weight. If we stick to parallel weight forms, it is known that
  the same happens with the dimension of such modular forms spaces.
  This implies that the bound we got is the best possible up to a
  constant (depending only on the level and the base field).
\end{remark}

\begin{remark}
  When the narrow class number is greater than $1$, one can relate
  modular forms for the different subgroups $\PGL_2^+(\Om_K,\id{a})$
  (varying $\id{a}$) using the action of the Hecke operators. This
  allows to take the number of coefficients needed to check
  congruences/equality of modular forms to be the minimum between all the ideals, but they need not be the ones with smaller trace. See the Remark~\ref{rem:p11}.
\end{remark}

\section{Examples}

\subsection{The case $\Q(\sqrt{10})$} This is the first real
quadratic field with non-trivial class group. The class group has
order $2$ and the two representatives can be taken as $1$ and
$\<2,\sqrt{10}>$ (the unique prime ideal dividing $2$). The
discriminant of such field is $D=40 \not \equiv 1 \pmod 8$, hence
Conjecture \ref{conj:canonicaldivisor} holds and we can take $n=3$ for
the Hecke/Sturm bound. Applying the desingularization process of
Appendix \ref{appendix:a}, we see that for the principal ideal the
picture looks like Table~\ref{fig:example2_1}.
\begin{table}[h]
\begin{multicols}{2}
\psfrag{a}{\tiny $m_1$}
\psfrag{b}{\tiny $m_2$}
\psfrag{c}{\tiny $m_3$}
\psfrag{d}{\tiny $m_4$}
\psfrag{e}{\tiny $m_5$}
\psfrag{f}{\tiny $m_6$}
        \includegraphics[width=3cm]{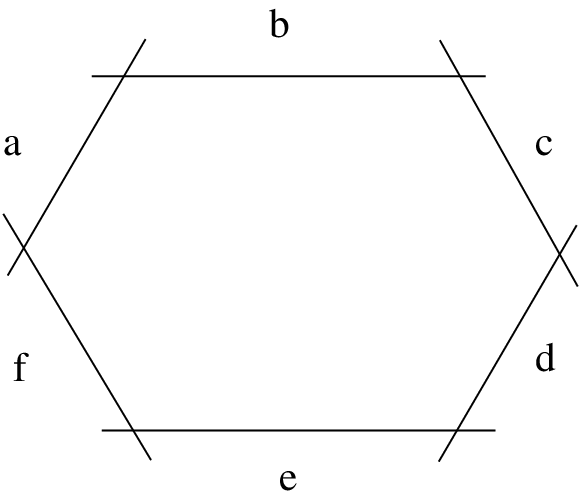}

\scalebox{0.9}{
\begin{tabular}{|c|c|c|}
\hline 
Label & Point & S.I.\\
\hline \hline
$m_1$ & $1$ & $-8$ \\
\hline
 $m_2$ & $4-\sqrt{10}$ & $-2$\\
\hline
$m_3$ & $7-2\sqrt{10}$ & $-2$ \\ 
\hline
$m_4$& $10-3\sqrt{10}$ & $-2$\\
\hline
$m_5$& $13-4\sqrt{10}$ & $-2$\\
\hline
$m_6$& $16-5\sqrt{10}$ & $-2$\\
\hline
\end{tabular}}
\end{multicols}
\caption{Infinity cusp desingularization for $\Gamma(\Om_{\Q(\sqrt{10})},1)$}
\label{fig:example2_1}
\end{table}

The bound then reads for the infinity cusp 
\[
a> \frac{2\cdot 2k\cdot3\cdot 7}{6\cdot6}-s\frac{6+4}{6} =\frac{7k-2s}{3}-s.
\]
If $G(z_1,z_2) \in M_{2k}(\SL_2(\Om_K))$, we have
\[
G(z_1,z_2)=\sum_{\xi \in (\frac{1}{2}\ZZ+\frac{1}{2\sqrt{10}}\ZZ)^+}
a_\xi \exp(2\pi i(\xi z_1 + \xi' z_2)).\] If $a_\xi = 0$ for all $\xi$
with $\trace(m\xi)\le \frac{7k-2s}{3}$, with $m$ any of the six
vertexes then $G(z_1,z_2)$ is the zero form. In particular, for cusp
forms of parallel weight $2$, whose dimension is $1$, we only need to
check the elements with trace one. The first vertex gives the
non-equivalent points
\[
\xi=\frac{-2}{2\sqrt{10}} + \frac{1}{2},\frac{-1}{2\sqrt{10}} + \frac{1}{2},\frac{1}{2},\frac{1}{2\sqrt{10}} + \frac{1}{2},\frac{2} {2\sqrt{10}} + \frac{1}{2},\frac{3}{2\sqrt{10}} + \frac{1}{2}.
\]

All the other ones give the point
$\xi=\frac{3}{2\sqrt{10}}+\frac{1}{2}$.

Here is a small table comparing the number of elements and the dimensions for some values of $k$:

\begin{table}[h]
\begin{tabular}{|c|c|c|c|c|c|c|}
\hline 
$2k$ & $20$ &$30$ &$40$ &$50$ &$100$&$150$\\
\hline 
Number of Elts & $1518$ & $3570$ & $6486$ & $9918$ & $40716$&$91350$ \\
\hline
Dimension & $212$ & $492$ & $888$ & $1402$ & $5718$ &$12952$ \\
\hline
\end{tabular}
\end{table}

Looking at the other cusp corresponds to look at the infinity cusp for
the level $\<2,\sqrt{10}>$. For this level, the desingularization at
infinity looks like Table~\ref{fig:example2_2}.
\begin{table}[h]
\begin{multicols}{2}
\psfrag{A}{\tiny $m_1$}
\psfrag{B}{\tiny $m_2$}
\psfrag{C}{\tiny $m_3$}
\psfrag{D}{\tiny $m_4$}
        \includegraphics[width=3cm]{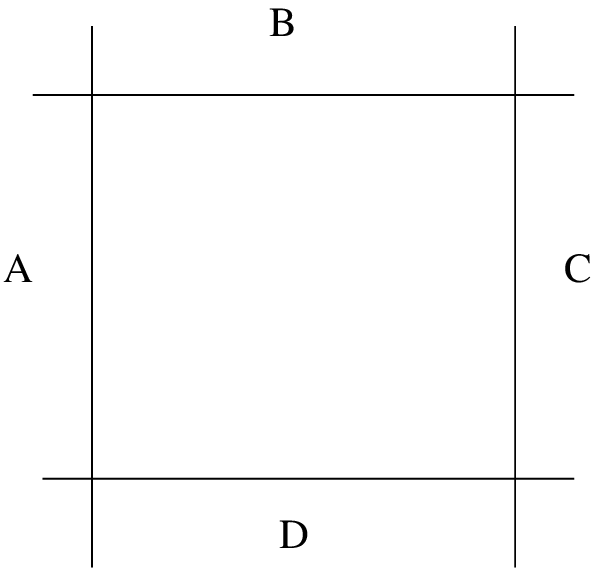}

\scalebox{0.9}{
\begin{tabular}{|c|c|c|}
\hline 
Label & Point & S.I.\\
\hline \hline
$m_1$ & $2$ & $-4$ \\
\hline
 $m_2$ & $4-\sqrt{10}$ & $-3$\\
\hline
$m_3$ & $10-3\sqrt{10}$ & $-2$ \\ 
\hline
$m_4$& $16-5\sqrt{10}$ & $-3$\\
\hline
\end{tabular}}
\end{multicols}
\caption{Infinity cusp desingularization for $\Gamma(\Om_{\Q(\sqrt{10})},\<2,\sqrt{10}>)$}
\label{fig:example2_2}
\end{table}

Then, the bound for this level at the infinity cusp reads
\[
a> \frac{2\cdot 2k\cdot3\cdot 7}{4\cdot6}-s\frac{6+4}{4} =\frac{7k-3s}{2}-s.
\]
If $G(z_1,z_2) \in M_{2k}(\Gamma(\Om_K,\<2,\sqrt{10}>))$, we have
\[
G(z_1,z_2)=\sum_{\xi \in (\frac{1}{4}\ZZ+\frac{1}{2\sqrt{10}}\ZZ)^+}
a_\xi \exp(2\pi i(\xi z_1 + \xi' z_2)).\] If $a_\xi = 0$ for all $\xi$
with $\trace(m\xi)\le \frac{7k-3s}{2}$, with $m$ any of the four
vertexes then $G(z_1,z_2)$ is the zero form. For cusp forms of
parallel weight $2$, whose dimension is $1$, we need to check the
elements with trace one or two. The first vertex gives the
non-equivalent points (up to units squared)
\[
\xi=\frac{1}{4}, \frac{1}{4}+\frac{1}{2\sqrt{10}},\frac{1}{4}-\frac{1}{2\sqrt{10}},\frac{1}{2}-\frac{1}{\sqrt{10}},\frac{1}{2}-\frac{1}{2\sqrt{10}},\frac{1}{2},\frac{1}{2}+\frac{1}{\sqrt{10}},\frac{1}{2}+\frac{1}{2\sqrt{10}},\frac{1}{2}+\frac{3}{2\sqrt{10}}.
\]
The first three points have trace $1$, while the others trace $2$. The second vertex gives the points
\[
\xi=\frac{1}{4}+\frac{1}{2\sqrt{10}},\frac{1}{2}+\frac{3}{2\sqrt{10}},\frac{1}{4},\frac{1}{2}+\frac{1}{\sqrt{10}},\frac{3}{4}+\frac{2}{\sqrt{10}},1+\frac{3}{\sqrt{10}},
\]
where the first two elements give trace $1$ while the others trace $2$. The third vertex gives the points
\[
\xi=\frac{1}{2}+\frac{3}{2\sqrt{10}},\frac{1}{4}+\frac{1}{2\sqrt{10}},1+\frac{3}{\sqrt{10}},\frac{7}{4}+\frac{11}{2\sqrt{10}},
\]
where the first one corresponds to trace $1$ and the other to trace
$2$. Note that the last element is equivalent to
$\frac{1}{4}-\frac{1}{2\sqrt{10}}$. The last vertex gives the points
\[
\xi = \frac{1}{2}+\frac{3}{2\sqrt{10}},\frac{7}{4}+\frac{11}{2\sqrt{10}},1+\frac{3}{\sqrt{10}},\frac{9}{4}+\frac{7}{\sqrt{10}},\frac{7}{2}+\frac{11}{\sqrt{10}},\frac{19}{4}+\frac{15}{\sqrt{10}},
\]
where the first two elements correspond to trace $1$ and the others to trace $2$. The last two elements are equivalent to the elements $\frac{1}{2}-\frac{1}{\sqrt{10}}$ and $\frac{1}{4}$ respectively, so we need to check $12$ coefficients.

Here is a small table comparing the number of elements and the dimensions for some values of $k$:

\begin{table}[h]
\begin{tabular}{|c|c|c|c|c|c|c|}
\hline 
$2k$ & $20$ &$30$ &$40$ &$50$ &$100$&$150$ \\
\hline 
Number of Elts & $2244$ & $5304$ & $9384$ & $14964$ & $60204$&$135720$ \\
\hline
Dimension & $212$ & $492$ & $888$ & $1402$ & $5718$ & $12952$\\
\hline
\end{tabular}
\end{table}

\subsection{The case $\Q(\sqrt{29})$} In this case the class number
and the narrow class number are both one. The discriminant is $29 \not
\equiv 1 \pmod 8$, hence Conjecture \ref{conj:canonicaldivisor} holds
and we can take $n=3$ for the Hecke/Sturm bound. Applying the
desingularization process of Appendix \ref{appendix:a}, we see that
for the principal ideal the picture looks like
Table~\ref{fig:example3_1}.

\begin{table}[h]
\begin{multicols}{2}
\psfrag{a}{\tiny $m_1$}
\psfrag{b}{\tiny $m_2$}
\psfrag{c}{\tiny $m_3$}
\psfrag{d}{\tiny $m_4$}
\psfrag{e}{\tiny $m_5$}
        \includegraphics[width=3cm]{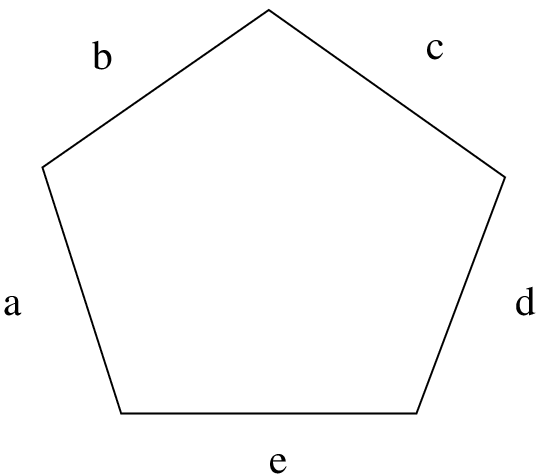}

\scalebox{0.9}{
\begin{tabular}{|c|c|c|}
\hline 
Label & Point & S.I.\\
\hline \hline
$m_1$ & $1$ & $-7$ \\
\hline
 $m_2$ & $\frac{7-\sqrt{29}}{2}$ & $-2$\\
\hline
$m_3$ & $6-\sqrt{29}$ & $-2$ \\ 
\hline
$m_4$& $\frac{17-3\sqrt{29}}{2}$ & $-2$\\
\hline
$m_5$& $11-2\sqrt{29}$ & $-2$\\
\hline
\end{tabular}}
\end{multicols}
\caption{Infinity cusp desingularization for $\Gamma(\Om_{\Q(\sqrt{29})},1)$}
\label{fig:example3_1}
\end{table}

Then, the bound for this level at the infinity cusp reads
\[
a> \frac{2\cdot 2k\cdot3\cdot 1}{5\cdot2}-s =\frac{6k}{5}-s.
\]
If $G(z_1,z_2) \in M_{2k}(\SL_2(\Om_K))$, we have
\[
G(z_1,z_2)=\sum_{\xi \in
  (\frac{1}{\sqrt{29}}\ZZ+(\frac{1}{2}+\frac{1}{2\sqrt{29}})\ZZ)^+}
a_\xi \exp(2\pi i(\xi z_1 + \xi' z_2)).\] If $a_\xi = 0$ for all $\xi$
with $\trace(m\xi)\le \frac{6k}{5}$, with $m$ any of the five vertexes
then $G(z_1,z_2)$ is the zero form. For cusp forms of parallel weight
$2$, whose dimension is $1$, we need to check the elements with trace
one. The first vertex gives the five non-equivalent points 
\[
\xi=\frac{1}{2}\pm\frac{1}{2\sqrt{29}},\frac{1}{2}\pm\frac{3}{2\sqrt{29}},\frac{1}{2}+\frac{5}{2\sqrt{29}}.
\]
The second vertex, the third vertex, the fourth and the fifth vertex
give the point $\xi=\frac{1}{2}+\frac{5}{2\sqrt{29}}$. So we need to
check $5$ elements of the Fourier expansion.

Here is a small table comparing the number of elements and the dimensions for some values of $k$:

\begin{table}[h]
\begin{tabular}{|c|c|c|c|c|c|c|c|c|}
\hline 
$2k$ & $20$ &$30$ &$40$ &$50$ &$100$&$150$&$200$&$300$ \\
\hline 
Number of Elts & $390$ & $855$ & $1500$ & $2326$ & $9151$&$20477$ &$36302$ & $81453$\\
\hline
Dimension & $92$ & $212$ & $381$ & $602$ & $2451$&$5552$ & $9902$ & $22352$\\
\hline
\end{tabular}
\end{table}

\subsection{The case $\Q(\sqrt{11})$} This real quadratic field has
class number is $1$ and narrow class number is $2$. Generators are
given by the principal ideal and the prime ideal $(\sqrt{11})^{-1}$. Since
$D=44 \not \equiv 1 \pmod 8$, Conjecture \ref{conj:canonicaldivisor}
holds and we can take $n=3$. Applying the desingularization process of
Appendix \ref{appendix:a}, we see that for the principal ideal the
picture looks like Table~\ref{fig:example14_1}.

\begin{table}[h]
\begin{multicols}{2}
\psfrag{a}{\tiny $m_1$}
\psfrag{b}{\tiny $m_2$}
\psfrag{c}{\tiny $m_3$}
\psfrag{d}{\tiny $m_4$}
\psfrag{e}{\tiny $m_5$}
\psfrag{f}{\tiny $m_6$}
        \includegraphics[width=3cm]{Example10_1.eps}

\scalebox{0.9}{
\begin{tabular}{|c|c|c|}
\hline 
Label & Point & S.I.\\
\hline \hline
$m_1$ & $1$ & $-8$ \\
\hline
 $m_2$ & $4-\sqrt{11}$ & $-2$\\
\hline
$m_3$ & $7-2\sqrt{11}$ & $-2$ \\ 
\hline
$m_4$& $10-3\sqrt{11}$ & $-8$\\
\hline
$m_5$& $73-22\sqrt{11}$ & $-2$\\
\hline
$m_6$& $136-41\sqrt{11}$ & $-2$\\
\hline
\end{tabular}}
\end{multicols}
\caption{Infinity cusp desingularization for $\Gamma(\Om_{\Q(\sqrt{11})},1)$}
\label{fig:example14_1}
\end{table}

The bound then reads for the infinity cusp 
\[
a> \frac{4k\cdot3\cdot7}{12\cdot6}-s=\frac{7k}{6}-s.
\]
If $G(z_1,z_2) \in M_{2k}(\SL_2(\Om_K))$, we have
\[
G(z_1,z_2)=\sum_{\xi \in (\frac{1}{2}\ZZ+\frac{1}{2\sqrt{11}}\ZZ)^+}
a_\xi \exp(2\pi i(\xi z_1 + \xi' z_2)).\] If $a_\xi = 0$ for all $\xi$
with $\trace(m\xi)\le \frac{7k}{6}$, with $m$ any of the six
vertexes, then $G(z_1,z_2)$ is the zero form. For cusp forms of
parallel weight $2$, whose dimension is $2$, we need to check elements
with trace $1$. The first vertex gives the seven non-equivalent points
\[
\frac{1}{2}\pm\frac{3}{2 \sqrt{11}} ,\frac{1}{2}\pm\frac{1}{\sqrt{11}} ,\frac{1}{2}\pm\frac{1}{2 \sqrt{11}},\frac{1}{2}.
\]
The second and the third vertices give the point
$\frac{1}{2}+\frac{3}{2\sqrt{11}}$. The fourth vertex gives the points
\[
\frac{1}{2}+\frac{3}{2 \sqrt{11}}, 2+\frac{13}{2 \sqrt{11}}, \frac{7}{2}+\frac{23}{2 \sqrt{11}}, 5+\frac{33}{2 \sqrt{11}}, \frac{13}{2}+\frac{43}{2 \sqrt{11}}, 8+\frac{53}{2 \sqrt{11}}, \frac{19}{2}+\frac{63}{2 \sqrt{11}}.
\]
The last two vertices give the point
$\frac{19}{2}+\frac{63}{2\sqrt{11}}$. We have to check $12$
conditions, since the elements $\frac{19}{2}+\frac{63}{2\sqrt{11}}$ and
  $\frac{1}{2}-\frac{3}{2\sqrt{11}}$ differ by an even power of the
  fundamental unit.

Here is a small table comparing the number of elements and the dimensions for some values of $k$:

\begin{table}[h]
\begin{tabular}{|c|c|c|c|c|c|c|}
\hline 
$2k$ & $20$ &$30$ &$40$ &$50$ &$100$&$150$\\
\hline 
Number of Elts &$792$ & $1836$ & $3312$ & $5220$ & $20532$ & $45936$\\
\hline
Dimension &  $212$ & $492$ & $888$ & $1402$ & $5718$&$12952$\\

\hline
\end{tabular}
\end{table}

The desingularization for the class of $(\sqrt{11})^{-1}$ has $12$ lines. The representatives and their intersection is given in Table~\ref{table:case_14_2}.

\begin{table}[h]
\begin{tabular}{|c|c||c|c|}
\hline 
Point & S.I.&Point & S.I.\\
\hline \hline
 $\frac{1}{11}$ & $-5$ &$-\frac{3}{\sqrt{11}} + \frac{10}{11}$&$-5$\\
\hline
$-\frac{1}{2\sqrt{11}} + \frac{5}{22}$ &$-2$ &$-\frac{25}{2 \sqrt{11}} + \frac{83}{22}$&$-2$\\
\hline
$-\frac{1}{\sqrt{11}} + \frac{4}{11}$&$-2$ &$-\frac{22}{\sqrt{11}} + \frac{73}{11}$&$-2$\\
\hline
$-\frac{3}{2 \sqrt{11}} + \frac{1}{2}$ &$-2$ &$-\frac{63}{2\sqrt{11}} + \frac{19}{2}$&$-2$\\
\hline
$-\frac{2}{\sqrt{11}} + \frac{7}{11}$&$-2$ &$-\frac{41}{\sqrt{11}} + \frac{136}{11}$&$-2$\\
\hline
$-\frac{5}{2 \sqrt{11}} + \frac{17}{22}$&$-2$ &$-\frac{101}{2 \sqrt{11}} + \frac{335}{22}$&$-2$\\

\hline
\end{tabular}
\caption{Infinity cusp desingularization for $\Gamma(\Om_{\Q(\sqrt{11})},(\sqrt{11})^{-1})$}
\label{table:case_14_2}
\end{table}

Then, the bound for this level at the infinity cusp reads
\[
a> \frac{7k}{3}-s.
\]
If $G(z_1,z_2) \in M_{2k}(\Gamma(\Om_K,(\sqrt{11})^{-1}))$, we have

\[
G(z_1,z_2)=\sum_{\xi \in (\frac{1}{2}\ZZ+\frac{\sqrt{11}}{2}\ZZ)^+} a_\xi \exp(2\pi i(\xi
z_1 + \xi' z_2)).\]

If $a_\xi = 0$ for all $\xi$ with $\trace(m\xi)\le \frac{7k}{3}$, with
$m$ any of the twelve vertices then $G(z_1,z_2)$ is the zero form. For
$k=2$, the dimension is $3$ and we have to check $179$ conditions
(which correspond to $138$ different ideals).

Here is a small table comparing the number of elements and the dimensions for some values of $k$ (the dashes in the table mean the number could not be computed):

\begin{table}[h]
\begin{tabular}{|c|c|c|c|c|c|c|}
\hline 
$2k$ & $20$ &$30$ &$40$ &$50$ &$100$&$150$\\
\hline 
Number of Elts & $21483$ & $49585$ & -- & -- & -- & --\\
\hline
Dimension & $213$ & $493$ & $889$ & $1403$ & $5719$ &$12953$\\
\hline
\end{tabular}
\end{table}

\begin{remark}

  Hecke operators do not act on the surface $Y_{\Gamma(\Om_K,\Om_K)}$, but
  rather act as correspondences on the product of the surfaces
  $Y_{\PGL_2^+(\Om_K,\Om_K)}\times Y_{\PGL_2^+(\Om_K,(\sqrt{11})^{-1})}$,
  i.e. they act on product of Hilbert modular forms where the first
  component is invariant under $\PGL_2^+(\Om_K,1)$ and the second one
  under $\PGL_2^+(\Om_K,(\sqrt{11})^{-1})$ (this are the automorphic
  forms, see \cite{Garrett} for definitions of Hilbert modular forms,
  its relation with automorphic forms and Hecke operators). A form in
  $M_{\bf k}(\PGL_2^+(\Om_K,(\sqrt{11})^{-1}))$ can be thought as an
  automorphic form supported only in one component.

  Let $\id{p}_{11}$ denote the prime ideal generated by
  $\sqrt{11}$. Then the Hecke operator $T_{\id{p}_{11}}$ sends a form
  $F$ supported in $M_{\bf k}(\PGL_2^+(\Om_K,(\sqrt{11})^{-1}))$ to the
  form supported in $M_{\bf k}(\PGL_2^+(\Om_K,\Om_K))$. Furthermore, if 
\[
F(z_1,z_2) = \sum_{\xi \in M^\vee_+ \cup \{0\}} a_{\xi} \exp(2 \pi i(\xi z_1+\xi' z_2)).
\]
then 
\[
T_{\id{p}_{11}}(F)(z_1,z_2) = \sum_{\xi \in M^\vee_+ \cup \{0\}} \left(11a_{\xi}+a_{\frac{\xi}{11}}\right)\exp(2 \pi i(\xi z_1+\xi' z_2)).
\]
Assume that the form $F(z_1,z_2)$ is not in the kernel of the Hecke operator
$T_{\id{p}_{11}}$ (they are usually invertible operators). Then if the
Fourier coefficients $a_\xi$ and $a_{\frac{\xi}{11}}$, with $\xi$ in
the Hecke/Sturm set for the trivial class are all zero/congruent to
zero, then the form $F(z_1,z_2)$ itself is the zero form. This implies looking at
less than $4$ times the dimension coefficients instead of $100$ times
the dimension!

It is worthwile studying the action of the Hecke operators to improve
our Sturm bound for general real quadratic fields.
\label{rem:p11}  
\end{remark}

\appendix
\section{Desingularization algorithm}
\label{appendix:a}

Recall that the isotropy group of any cusp for $\Gamma(\id{c},\id{a})$
is conjugate to a group of the form $G(M,V)$, where $M\subset K$ is an
$\Om_K$-module and $V \subset U_K^+$ is a subgroup of finite index. As
a transformation group $G(M,V) = M \rtimes V$. To compute the
desingularization of the cusp we first look at the module $M$.

An oriented basis of $M$ is a $\ZZ$-basis $M =\<\alpha,\beta>$ such
that $\det\left(\begin{smallmatrix}\alpha & \beta \\ \alpha' &
    \beta'\end{smallmatrix}\right) > 0$. To an oriented basis we can
associate the indefinite binary quadratic form $Q(x,y) =
\frac{1}{\norm(M)}\norm(\alpha x + \beta y)$, where $\norm(M)$
indicates the content of the form $\norm(\alpha x + \beta y)$,
i.e. the rational number which makes $Q(x,y)$ an integral
primitive form. 

If $\lambda$ is a totally positive element, multiplication by
$\lambda$ sends oriented bases of $M$ to oriented bases of $\lambda
M$, but clearly $\<\alpha,\beta>$ and $\<\lambda \alpha,\lambda
\beta>$ have the same quadratic form attached. Choosing a different
oriented basis gives an $\SL_2(\ZZ)$-equivalent form, hence we get a
bijection between the narrow class group of $K$ and
$\SL_2(\ZZ)$-equivalence classes of integral primitive indefinite
binary quadratic forms of discriminant $D$.

Following \cite{VDG}, we call a form $ax^2+bxy+cy^2$ of discriminant
$D$ \emph{reduced} if
\begin{equation}
0< \frac{b - \sqrt{D}}{2a} <1< \frac{b + \sqrt{D}}{2a}.
\label{eq:reduced}
\end{equation}

Using strict $\SL_2(\ZZ)$ equivalence, one can reduce any indefinite
integral binary quadratic form of discriminant $D$ to a reduced
one. In other words, starting from $M$ one gets an oriented basis of
the form $\lambda M =\left(\frac{b+\sqrt{D}}{2a}\right)\ZZ+\ZZ$.

\begin{remark}
  This notion of a reduced form is not universal. For example, in
  Cohen's book (see (\cite{cohen} Definition 5.6.2) a reduced
  indefinite integral binary quadratic form satisfies
\[
0 \le \frac{\sqrt{D}-b}{2|a|} <1<\frac{\sqrt{D}+b}{2|a|}.
\]
Starting from $Q(x,y)$ one can use Cohen's algorithm (\cite{cohen}
Algorithm 5.6.5 which is for example implemented in \cite{PARI2}) to
get a Cohen-reduced form. Note that we can always take as reduced form one
with $a>0$ (by Proposition 5.6.6 of \cite{cohen}) and remove the
previous absolute value. If we apply the change of variables given by
the matrix $\left(\begin{smallmatrix}
    1&1\\0&1\end{smallmatrix}\right)$, which sends $b$ to $b+2a$, we
get a reduced form in the sence of \eqref{eq:reduced}.
\end{remark}

Once we computed the reduced basis for $\lambda M$, the first vertices
of the convex hull are:
\begin{equation*}
A_{-1} = w_0:=\frac{b+\sqrt{D}}{2a}, \qquad \qquad A_0 = 1, \qquad \qquad A_{k+1}:= b_kA_k-A_{k-1},
\end{equation*}
where the numbers $b_k$ are defined recursively by 
\[
b_k := \lceil w_k \rceil \qquad   \text{ and }\qquad w_{k+1} := \frac{1}{b_k -
  w_k}.
\]

Now we add the multiplicative structure. Let $\varepsilon$ be a
generator of $U_{\Om_K}^2$. It acts on the sequence $\{A_k\}$ with a
finite number of representatives. Moreover, the sequence $\{b_k\}$
(and $\{w_k\}$ also) is periodic with some length $r$. Then for all $k \in \ZZ$,
\[
A_k=\varepsilon^{\pm\nu}A_{k+r},
\]
where $\nu =1$ if the fundamental unit of $K$ has norm $-1$ and $\nu=2$ otherwise.

Let $\tilde{r} = r\cdot \nu \cdot [U_{\Om_K}^2 :V]$.  Then the
resolution attached to $G(M,V)$ consists of $\tilde{r}$ lines $S_k$,
$k \in \ZZ/\tilde{r}$ (each one isomorphic to $\PP^1$) which satisfy:
\begin{itemize}
\item $S_k^2 = -b_k$ if $\tilde{r} \ge 2$.
\item Let $n,m$ be integers and $\tilde{r} \ge 3$. Then:
\begin{itemize}
\item If $n \not \equiv m \pm 1 \pmod{\tilde{r}}$, $S_n \cap  S_m = \emptyset$.
\item If $n \equiv \pm 1 \pmod{\tilde{r}}$, $S_n \cap S_m$ is one point.
\end{itemize}
\item If $\tilde{r}=1$, then $S_0$ is singular and $S_0^2=-b_0+2$.
\item If $\tilde{r}=2$, then $S_0$ and $S_1$ are non-singular and intersect in
  $2$ points.
\end{itemize}

\section{Rational case}
\label{appendix:b}

Recall that $Y_{\Gamma(\Om_K,\id{a})}$ is rational for $D=5, 8, 12,
13, 17, 21, 24, 28, 33, 60$ and $\id{a}$ in the principal genus, or
for $D=12$ and $\id{a}$ not in the principal genus. The purpose of this
appendix is to give a Sturm bound for some of these cases.  If $\id{c}$ is an
integral ideal such that $Y_{\Gamma(\id{c},\id{a})}$ or a blow
down of it, is a minimal surface of general type we still get the
Hecke/Sturm bound
\[
a > \frac{4k[\Gamma(\id{c},\id{a}):\Gamma(\Om_K,\id{a})]\zeta_k(-1)}{\sum_j (b_{i_{0},j}-2)}-s
\left(\frac{\sum_{i=1}^h \sum_j
    (b_{i,j}-2)}{\sum_j(b_{i_{0},j}-2)}\right),
\]
where the numbers $b_{i,j}$ are the ones appearing in the cusp
desingularization process of $Y_{\Gamma(\id{c},\id{a})}$, or that of
its blow down. Here is a summary of the ideals $\id{c}$ which give a
minimal surface of general type for some values of $D$:
\begin{itemize}
\item $D=5$: $\id{c}=3$ (\cite{VDG} Example 7.5 p. 179). There are ten
  non-equivalent cusps, each one resolved by a cycle $(3,3,3,3)$.
\item $D=8$: $\id{c}=\id{p}_7$ a prime ideal or norm $7$ (\cite{VDG}
  page 196). There are eight cusps, each one resolved by a cycle
  $(4,2,4,2,4,2)$.
\item $D=13$, $\id{c}=2$ (see \cite{ZVDG} page 197) gives a surface of
  general type with the components of $F_1$ as the unique exceptional
  curves. There are $5$ cusps, each one in the minimal model is
  resolved by a cycle $(2,2,3,2,2,3,2,2,3)$ .
\item $D=17$: $\id{c}=2$ (see \cite{VDG}, page 198) gives a surface of
  general type with the components of $F_1$ as the unique exceptional
  curves. There are $9$ cusps, each one resolved in the minimal
  model by a cycle $(2,2,3,3,3)$ .
\item $D=21$: $\id{c}=2$ (Theorem 3 of \cite{ZVDG}) gives a surface of
  general type with the components of $F_1$ as the unique exceptional
  curves. There are $5$ cusps, each one resolved in the minimal model
  by a cycle $(5,5,5,5,5,5)$ .
\item If $D=24$: $\id{c}=\id{p}_2$, the prime ideal of norm $2$ (see
  \cite{VDG2} page 166) gives a surface of general type with the
  components of $F_1$ as the unique exceptional ones. There are $3$
  non-equivalent cusps, each one resolved in the minimal model by
  a cycle $(2,2,2,3,2,2,2,3)$ .
\item $D=12$ and $\id{a}$ not in the principal genus: $\id{c}=2$
  (\cite{VDG} page 197) gives a minimal surface of general type. There
  are $3$ cusps, each one resolved by a cycle $(2,3)$.
\end{itemize}


With these data, we get the following Hecke bounds for Hilbert modular
form of parallel weight $k$, level $\Gamma(\id{c},\id{a})$ and
vanishing with order $s$ at all cusps:

\begin{table}[h]
\begin{tabular}{|c|c|c|c|c|c|c|c|c|}
\hline 
$D$ & $5$ &$8$ &$12$&$13$&$17$&$21$&$24$\\
\hline 
$\id{a}$ & $1$ & $1$ &$\sqrt{3}$&$1$&$1$&$1$&$1$\\
\hline
$a>$ & $48k-10s$ & $\frac{14k}{3}-8s$ &$4k-3s$&$\frac{40k}{3}-5s$&$4k-9s$&$\frac{40k}{9}-5s$&$12k-3s$\\
\hline
\end{tabular}
\end{table}


\bibliographystyle{alpha}
\bibliography{Sturm}

\begin{thebibliography}{vdGZ77}

\bibitem[Cha90]{Chai}
C.-L. Chai.
\newblock Arithmetic minimal compactification of the {H}ilbert-{B}lumenthal
  moduli spaces.
\newblock {\em Ann. of Math. (2)}, 131(3):541--554, 1990.

\bibitem[Coh93]{cohen}
Henri Cohen.
\newblock {\em A course in computational algebraic number theory}, volume 138
  of {\em Graduate Texts in Mathematics}.
\newblock Springer-Verlag, Berlin, 1993.

\bibitem[DPS12]{DPS}
Luis Dieulefait, Ariel Pacetti, and Matthias Sch{\"u}tt.
\newblock Modularity of the {C}onsani-{S}cholten quintic.
\newblock {\em Doc. Math.}, 17:953--987, 2012.
\newblock With an appendix by Jos{\'e} Burgos Gil and Ariel Pacetti.

\bibitem[Fre03]{Freitag}
Eberhard Freitag.
\newblock Modular embeddings of hilbert modular surfaces.
\newblock \url{http://www.rzuser.uni-heidelberg.de/~t91/index4.html}, 2003.

\bibitem[Gar90]{Garrett}
Paul~B. Garrett.
\newblock {\em Holomorphic {H}ilbert modular forms}.
\newblock The Wadsworth \& Brooks/Cole Mathematics Series. Wadsworth \&
  Brooks/Cole Advanced Books \& Software, Pacific Grove, CA, 1990.

\bibitem[Gor02]{Go}
Eyal~Z. Goren.
\newblock {\em Lectures on {H}ilbert modular varieties and modular forms},
  volume~14 of {\em CRM Monograph Series}.
\newblock American Mathematical Society, Providence, RI, 2002.
\newblock With the assistance of Marc-Hubert Nicole.

\bibitem[Hec70]{Hecke}
Erich Hecke.
\newblock {\em Mathematische {W}erke}.
\newblock Vandenhoeck\thinspace \&\thinspace Ruprecht, G\"ottingen, 1970.
\newblock Mit einer Vorbemerkung von B. Schoenberg, einer Anmerkung von Carl
  Ludwig Siegel, und einer Todesanzeige von Jakob Nielsen, Zweite durchgesehene
  Auflage.

\bibitem[Her87]{Hermann}
Carl~Friedrich Hermann.
\newblock Thetareihen und modulare {S}pitzenformen zu den {H}ilbertschen
  {M}odulgruppen reell-quadratischer {K}\"orper.
\newblock {\em Math. Ann.}, 277(2):327--344, 1987.

\bibitem[Her89]{Hermann2}
Carl~Friedrich Hermann.
\newblock Thetareihen und modulare {S}pitzenformen zu den {H}ilbertschen
  {M}odulgruppen reell-quadratischer {K}\"orper. {II}.
\newblock {\em Math. Ann.}, 283(4):689--700, 1989.

\bibitem[Kat73]{Katz}
N.~Katz.
\newblock $p$-adic properties of modular schemes and modular forms.
\newblock In W.~Kuyk and J.~P. Serre, editors, {\em Modular functions of one
  variable {III}}, volume 350 of {\em Lecture Notes in Mathematics}, pages
  69--190. Springer-Verlag, 1973.

\bibitem[KU85]{Ueno}
Toshiyuki Katsura and Kenji Ueno.
\newblock On elliptic surfaces in characteristic {$p$}.
\newblock {\em Math. Ann.}, 272(3):291--330, 1985.

\bibitem[Pap95]{Pappas}
Georgios Pappas.
\newblock Arithmetic models for {H}ilbert modular varieties.
\newblock {\em Compositio Math.}, 98(1):43--76, 1995.

\bibitem[PAR12]{PARI2}
The PARI~Group, Bordeaux.
\newblock {\em PARI/GP, version {\tt 2.6.0}}, 2012.
\newblock available from \url{http://pari.math.u-bordeaux.fr/}.

\bibitem[Rap78]{Rapoport}
M.~Rapoport.
\newblock Compactifications de l'espace de modules de {H}ilbert-{B}lumenthal.
\newblock {\em Compositio Math.}, 36(3):255--335, 1978.

\bibitem[Shi94]{Shimura}
Goro Shimura.
\newblock {\em Introduction to the arithmetic theory of automorphic functions},
  volume~11 of {\em Publications of the Mathematical Society of Japan}.
\newblock Princeton University Press, Princeton, NJ, 1994.
\newblock Reprint of the 1971 original, Kan{\^o} Memorial Lectures, 1.

\bibitem[vdG78]{VDG2}
G.~van~der Geer.
\newblock Hilbert modular forms for the field {${\bf Q}(\surd 6)$}.
\newblock {\em Math. Ann.}, 233(2):163--179, 1978.

\bibitem[vdG88]{VDG}
Gerard van~der Geer.
\newblock {\em Hilbert modular surfaces}, volume~16 of {\em Ergebnisse der
  Mathematik und ihrer Grenzgebiete (3) [Results in Mathematics and Related
  Areas (3)]}.
\newblock Springer-Verlag, Berlin, 1988.

\bibitem[vdGZ77]{ZVDG}
G.~van~der Geer and D.~Zagier.
\newblock The {H}ilbert modular group for the field {${\bf Q}(\surd 13)$}.
\newblock {\em Invent. Math.}, 42:93--133, 1977.

\end{thebibliography}

\end{document}